\title[Nilpotent covers and non-nilpotent subsets]{Nilpotent covers and non-nilpotent subsets of finite groups of Lie type}
\author{Azizollah Azad}
\address{Department of Mathematics, Faculty of Sciences, Arak University, Arak 38156-8-8349, Iran}
\email{a-azad@araku.ac.ir}
\author{John R. Britnell}
\address{Department of Mathematics, Imperial College London, South Kensington Campus, London SW7 2AZ, United Kingdom}
\email{j.britnell@imperial.ac.uk}
\author{Nick Gill}
\address{Escuela de Matem\'atica, Universidad de Costa Rica, 11501 San Jos\'e, Costa Rica}
\email{nickgill@cantab.net}
\date{8 August 2014}
\keywords{Non-nilpotent set, nilpotent subgroup, nilpotent cover, regular semisimple element, regular unipotent element, finite simple group of Lie type}
\subjclass[2010]{20D60, 20E07, 20G40}
\theoremstyle{plain}
\newtheorem{thm}{Theorem}[section]
\newtheorem{lem}[thm]{Lemma}
\newtheorem{prop}[thm]{Proposition}
\theoremstyle{definition}
\newtheorem{conj}[thm]{Conjecture}
\newtheorem{question}[thm]{Question}
\newtheorem{defn}[thm]{Definition}
\def\S{Section~}
\newcommand{\reduce}{\!}
\newcommand{\Fq}{\mathbb{F}_q}
\newcommand{\rank}{\mathrm{rank}}
\newcommand{\omegac}[1]{\omega_c({#1})}
\newcommand{\omegaone}[1]{\omega_1({#1})}
\newcommand{\omegan}[1]{\omega_\infty({#1})}
\newcommand{\zz}{\mathbb{Z}}
\newcommand{\longintegers}{\zz^+\cup\{\infty\}}
\newcommand{\GL}{\mathrm{GL}}
\newcommand{\PGL}{\mathrm{PGL}}
\newcommand{\SL}{\mathrm{SL}}
\newcommand{\PSL}{\mathrm{PSL}}
\newcommand{\GU}{\mathrm{GU}}
\newcommand{\PGU}{\mathrm{PGU}}
\newcommand{\SU}{\mathrm{SU}}
\newcommand{\PSU}{\mathrm{PSU}}
\newcommand{\Sp}{\mathrm{Sp}}
\newcommand{\Hom}{\mathrm{Hom}}
\newcommand{\N}{\mathbb{N}}
\newcommand{\R}{\mathbb{R}}
\newcommand{\C}{\mathbb{C}}
\begin{document}

\begin{abstract}
Let~$G$ be a finite group and~$c$ an element of~$\longintegers$. A subgroup~$H$ of~$G$ is said to be {\it $c$-nilpotent}
if it is nilpotent and has nilpotency class at most~$c$. A subset~$X$ of~$G$ is said to be
{\it non-$c$-nilpotent} if it contains no two elements~$x$ and~$y$ such that the subgroup ${\langle x,y\rangle}$
is $c$-nilpotent. In this paper we study the quantity~$\omegac{G}$, defined to be the size of the largest non-$c$-nilpotent subset of~$L$.

In the case that~$L$ is a finite group of Lie type, we identify covers of~$L$ by $c$-nilpotent subgroups, and
we use these covers to construct large non-$c$-nilpotent sets in~$L$. We prove that for groups $L$ of fixed rank $r$, there exist constants
$D_r$ and $E_r$ such that ${D_r N \leq \omega_\infty(L) \leq E_r N}$, where $N$ is the number of maximal tori in $L$.

In the case of groups~$L$ with twisted rank~$1$, we provide exact formulae for~$\omegac{L}$ for all $c\in\longintegers$. If we write $q$ for the level of the Frobenius endomorphism associated with $L$ and assume that $q>5$, then $\omegan{L}$ may be expressed as a polynomial in $q$ with coefficients in $\{0,1\}$.
\end{abstract}

\maketitle

\section{Introduction and results}

Let~${G}$ be a group and~${c}$ an element of~${\longintegers}$. For ${c\in\mathbb{Z}^+}$,
we define~${G}$ to be {\it ${c}$-nilpotent} if~${G}$ is nilpotent of class at most~${c}$. We define~${G}$
to be {\it ${\infty}$-nilpotent} if~${G}$ is nilpotent.

\subsection{Non-${c}$-nilpotent subsets}

A subset~${X}$ of~${G}$ is said to be {\it non-${c}$-nilpotent} if, for any two distinct elements~${x}$ and~${y}$ in~${X}$,
the subgroup ${\langle x, y\rangle}$ of~${G}$ which they generate is not ${c}$-nilpotent.
The subset~${X}$ is said to be {\it non-nilpotent} if it is non-${\infty}$-nilpotent.

Define~${\omegac{G}}$ to be the maximum order of a non-${c}$-nilpotent subset of~${G}$.
It is a trivial observation that for any group~${G}$, we have
\[ \omega_1(G)\geq \omega_2(G)\geq \cdots \geq \omega_\infty(G). \]
Furthermore, if the class of a nilpotent subgroup of~${G}$ is bounded above by an integer~${d}$, then clearly
\[ \omega_d(G)=\omega_{d+1}(G) = \cdots = \omegan{G}. \]
Certainly such a bound exists whenever~${G}$ is finite. Furthermore if~${G}$
is a nilpotent group of class~${d}$, then ${\omegac{G}=1}$ for ${c\geq d}$, and for ${c=\infty}$.

Our main interest in this paper will be the quantity~${\omegan{G}}$, where~${G}$ is a finite group of Lie type. However we shall also
consider~${\omegac{G}}$ for various finite values of~${c}$. In particular the case when ${c=1}$ is of interest, since
${\omegaone{G}}$ is the maximum order of a non-commuting subset (i.e.\ a subset~${X}$ of~${G}$ such that ${[x,y]\neq 1}$
for all~${x,y\in X}$).

\subsection{Nilpotent covers}

There is a close connection between the non-${c}$-nilpotent subsets of maximum order in a group~${G}$, and {\it ${c}$-nilpotent
covers} of~${G}$. Let~${\mathcal{N}}$ be a family of ${c}$-nilpotent subgroups of~${G}$. We shall be interested in two possible
properties of~${\mathcal{N}}$:
\begin{itemize}
\item {\bf Covering:} If for every ${g\in G}$ there exists ${X\in\mathcal{N}}$ such that ${g\in X}$, then we say that~${\mathcal{N}}$
is a {\it ${c}$-nilpotent cover} of~${G}$, or that~${\mathcal{N}}$ {\it covers}~${G}$. If ${c=\infty}$, we say that~${\mathcal{N}}$
is a {\it nilpotent cover} of~${G}$.
\item {\bf\boldmath${2}$-minimality:} If for every subgroup ${X_i\in \mathcal{N}}$ there is an element~${g_i}$
(called a {\it distinguished element}) such that ${i\neq j}$ implies that ${\langle g_i, g_j\rangle}$ is not
${c}$-nilpotent, then we say that~${\mathcal{N}}$ is {\it ${2}$-minimal}.
\end{itemize}

We remark that every group admits a ${c}$-nilpotent cover for all \mbox{${c\in\longintegers}$}, since the family consisting of
all cyclic subgroups is one such. But not every group admits a ${2}$-minimal ${c}$-nilpotent cover: examples are
the symmetric groups~${S_n}$ for~${n\geq 15}$ in the case ${c=1}$ (\cite{brown1, brown2}).

Suppose that ${\mathcal{N}}$ is a ${2}$-minimal ${c}$-nilpotent cover, and that ${X_1,X_2\in\mathcal{N}}$ contain distinguished elements~$g_1$ and $g_2$ respectively. We note that if ${X_2\neq X_1}$, then~${g_1\notin X_2}$ (otherwise the subgroup  ${\langle g_1,g_2\rangle}$ would be a subgroup of ${X_2}$, and hence ${c}$-nilpotent; this
contradicts the condition for ${2}$-minimality). It follows that the removal from~${\mathcal{N}}$ of any one of its members
results in a family which is not a cover of~${G}$ (since the distinguished element of the removed member is not in
any other member). Hence a ${2}$-minimal ${c}$-nilpotent cover of~${G}$ is, in particular, a {\it minimal} ${c}$-nilpotent cover.
The converse is false, however: it is not necessarily true that a minimal ${c}$-nilpotent cover of~${G}$ is ${2}$-minimal. This
is clear from the fact that there exist finite groups with no ${2}$-minimal ${c}$-nilpotent covers.

The significance of these properties to the calculation of ${\omega_c(G)}$, and in particular the value of finding a ${2}$-minimal cover of ${G}$, is shown by the following proposition.
\begin{prop}\label{p: N suffices}
Let ${\mathcal{N}}$ be a family of ${c}$-nilpotent subgroups of ${G}$.
\begin{enumerate}
\item If ${\mathcal{N}}$ covers ${G}$, then ${|\mathcal{N}|\ge \omega_c(G)}$.
\item If ${\mathcal{N}}$ is ${2}$-minimal, then ${|\mathcal{N}|\le \omega_c(G)}$.
\item If ${\mathcal{N}}$ is a ${2}$-minimal cover of ${G}$, then ${|\mathcal{N}|=\omega_c(G)}$.
\end{enumerate}
\end{prop}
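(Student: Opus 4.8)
The plan is to establish the three parts in turn, with part~(3) following at once by combining the inequalities of parts~(1) and~(2). The guiding idea is that the covering property and $2$-minimality are, in a precise sense, dual to the two bounds we want: a cover supplies a ceiling on $\omega_c(G)$ by furnishing a place to put each element of a large non-$c$-nilpotent set, whereas $2$-minimality supplies a floor by exhibiting a concrete large non-$c$-nilpotent set.

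For part~(1), I would begin with a non-$c$-nilpotent subset $X$ of maximum order, so that $|X|=\omega_c(G)$. Using the covering property, assign to each $x\in X$ a member $N_x\in\mathcal{N}$ with $x\in N_x$. The crucial point is that this assignment is injective: if $x,y\in X$ were distinct with $N_x=N_y=N$, then $\langle x,y\rangle$ would be a subgroup of the $c$-nilpotent group $N$ and hence itself $c$-nilpotent, contradicting the hypothesis that $X$ is non-$c$-nilpotent. Here I use the elementary fact that a subgroup of a nilpotent group of class at most $c$ again has class at most $c$ (and that subgroups of nilpotent groups are nilpotent when $c=\infty$). The injection yields $\omega_c(G)=|X|\le|\mathcal{N}|$.

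For part~(2), I would instead work with the distinguished elements. Writing $g_i$ for the distinguished element attached to $X_i\in\mathcal{N}$, set $D=\{g_i\}$. By the definition of $2$-minimality, $\langle g_i,g_j\rangle$ is not $c$-nilpotent whenever $i\neq j$, so $D$ is non-$c$-nilpotent. The one point to check carefully is that the $g_i$ are genuinely distinct, so that $|D|=|\mathcal{N}|$: if $g_i=g_j$ for some $i\neq j$, then $\langle g_i,g_j\rangle$ would be cyclic and hence $c$-nilpotent for every $c\in\longintegers$, again contradicting $2$-minimality. Thus $D$ is a non-$c$-nilpotent set of size $|\mathcal{N}|$, which gives $|\mathcal{N}|=|D|\le\omega_c(G)$. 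Part~(3) is then immediate, since a $2$-minimal cover satisfies both hypotheses and the two inequalities combine to equality.

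There is no serious obstacle in this argument; it is in essence a matching between the elements of a non-$c$-nilpotent set and the members of the cover. The only places demanding care are the two appeals to the hereditary nature of $c$-nilpotency—used to rule out two elements of $X$ sharing a cover-member in part~(1), and to force the distinguished elements to be pairwise distinct in part~(2)—both of which rest on the closure of the class ``nilpotent of class at most $c$'' under passage to subgroups, together with the observation that every cyclic group lies in this class.
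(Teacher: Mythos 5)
Your proposal is correct and follows essentially the same route as the paper: part (1) is the paper's pigeonhole argument recast as an injection from a maximal non-$c$-nilpotent set into $\mathcal{N}$, part (2) uses the distinguished elements exactly as the paper does (the paper calls this step ``obvious''), and part (3) combines the two. Your extra care over the heredity of $c$-nilpotency under subgroups and the distinctness of the distinguished elements merely spells out details the paper leaves implicit.
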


\begin{proof}
For the first part, suppose ${X}$ is a set of size greater than ${|\mathcal{N}|}$. If ${\mathcal{N}}$ covers~${G}$, then there exist two elements of ${X}$
lying in one member of ${\mathcal{N}}$, and so ${X}$ cannot be non-${c}$-nilpotent. The second part is obvious, since the distinguished elements of a ${2}$-minimal family form a non-${c}$-nilpotent set. The third part of the proposition follows immediately from the other two.
\end{proof}

Lemmas~\ref{lem: upper bound} and~\ref{lem: lower bound} below provide useful generalizations of the inequalities in
the first two parts of Proposition \ref{p: N suffices}.

\subsection{Results and structure}

The paper is structured as follows. In \S\ref{s: lag} we give some background results on linear algebraic groups.
In \S\ref{s: groups} we set out the group-theoretic notation we will use throughout the paper, and we prove a number
of basic lemmas pertaining to non-${c}$-nilpotent groups.

In \S\ref{s: lower} we consider the case that where ${L=G^F}$, where ${G}$ is a simple linear algebraic group of rank ${r}$, and ${F}$ is a Frobenius endomorphism of~${G}$. Theorem~\ref{thm: putting sets together} offers a general method for producing lower bounds for~${\omegan{L}}$, by counting the Sylow subgroups of ${G}$ for certain prime divisors of ${|G|}$.  In the course of proving this theorem, we provide in Lemma \ref{lem: regular unipotent set} a generalization and strengthening of the main result of \cite{azad}.

In \S\ref{s: lower} we also prove the following theorem, which is the main result of this paper.
\begin{thm}\label{thm: rank-dependent lower bound}
For every ${r>0}$, there exist constants ${D_r, E_r>0}$ such that for any simple linear algebraic group ${G}$ of rank ${r}$, and for any Frobenius endomorphism~${F}$ of ${G}$, we have
\[ D_r N(G^F) \leq \omega_\infty(G^F) \leq E_r N(G^F), \]
where ${N(G^F)}$ is the number of ${F}$-stable maximal tori in ${G}$.
\end{thm}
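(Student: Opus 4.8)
The plan is to establish the two inequalities by separate arguments, in each case exploiting the principle behind Proposition~\ref{p: N suffices} (and its refinements in Lemmas~\ref{lem: upper bound} and~\ref{lem: lower bound}) that a non-nilpotent subset meets each nilpotent subgroup in at most one point. Throughout I write $p$ for the defining characteristic and $N$ for the number of positive roots of $G$, so that $\abs{L}_p = q^N$ and, by Steinberg's formula, $N(G^F) = q^{2N}$; I also record that $\abs{L}\sim q^{\dim G} = q^{\rank G + 2N}$, that every maximal torus $T^F$ has order $\sim q^r$, and that the number of Borel subgroups (equivalently Sylow $p$-subgroups) of $L$ is $\sim q^N$. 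Two further facts depending only on $r$ will be used: the Weyl group $W$ has order bounded in terms of $r$, and the subgroups arising as $C_G(s)^\circ$ for semisimple $s$ fall into at most $c_r$ conjugacy classes, where $c_r$ depends only on $r$.

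For the lower bound I would apply Theorem~\ref{thm: putting sets together}. Fix a conjugacy class of maximal tori of a regular type $T_w$ (for instance a torus associated to a $d$-regular element $w\in W$, such as a Coxeter element), and choose a prime $\ell$ that is a primitive prime divisor of the relevant cyclotomic value $\Phi_d(q)$; such an $\ell$ exists once $q$ exceeds a bound depending only on $r$, by Zsygmondy's theorem, and the finitely many remaining $(G,F,q)$ can be absorbed into the constant $D_r$. For such $\ell$ a Sylow $\ell$-subgroup $S$ lies in a unique maximal torus, of type $T_w$, with $C_L(S) = T_w^F$ and hence $N_L(S)\le N_L(T_w)$. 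Theorem~\ref{thm: putting sets together} then yields a non-nilpotent set whose size is the number of Sylow $\ell$-subgroups, which is
\[ \frac{\abs{L}}{\abs{N_L(S)}} \ge \frac{\abs{L}}{\abs{N_L(T_w)}} = \frac{\abs{L}}{\abs{T_w^F}\,\abs{C_W(w)}} \sim \frac{q^{2N}}{\abs{C_W(w)}} \ge \frac{q^{2N}}{\abs{W}}. \]
Since $\abs{W}$ is bounded in terms of $r$, this gives $\omegan{L}\ge D_r N(G^F)$ for a suitable $D_r$.

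For the upper bound let $X$ be a non-nilpotent subset of $L$ and split it according to Jordan decomposition, writing each $x\in X$ as $x = x_s x_u$. The semisimple elements of $X$ inject into the set of maximal tori: each lies in at least one maximal torus, while no maximal torus, being abelian, can contain two elements of $X$; hence there are at most $N(G^F)$ of them. For the remaining elements, those with $x_u\ne 1$, I would bucket by the conjugacy class of the reductive subgroup $M_x = C_G(x_s)^\circ$. The crucial point is a Jordan-decomposition criterion for nilpotency: if $x,y$ satisfy $M_x=M_y = M'$ then $\langle x,y\rangle$ is nilpotent as soon as $\langle x_u,y_u\rangle$ is a $p$-group, because the semisimple parts then centralise the relevant unipotent elements. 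Consequently, for each fixed conjugate $M'$ the map $x\mapsto x_u$ is injective on $\set{x\in X : M_x = M'}$ and carries it to a non-nilpotent set of unipotent elements of $(M')^F$; by the same disjointness principle applied inside $(M')^F$, such a set has size at most the number of Sylow $p$-subgroups of $(M')^F$, which is $\sim q^{N_M}$, where $N_M$ is the number of positive roots of $M$. Since $M'$ has $\abs{L : N_L(M')}\sim q^{2N - 2N_M}$ conjugates, the elements with $M_x$ in a given class number at most $\sim q^{2N-2N_M}\cdot q^{N_M} = q^{2N - N_M}\le q^{2N-1}$, and summing over the at most $c_r$ classes gives $\abs{X}\le N(G^F) + c_r q^{2N} \le E_r N(G^F)$.

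The main obstacle lies in making the two criteria fully uniform. On the lower-bound side one must verify, uniformly across all types and all Frobenius endomorphisms, that a regular torus type together with a primitive prime divisor really does produce Sylow subgroups satisfying the hypotheses of Theorem~\ref{thm: putting sets together}, and one must treat separately the small values of $q$ for which Zsygmondy's theorem fails. On the upper-bound side the delicate step is the Jordan-decomposition nilpotency criterion: one must check carefully that the relevant semisimple parts commute with the unipotent parts of the other element, and with one another, so that $\langle x_u,y_u\rangle$ being a $p$-group indeed forces $\langle x,y\rangle$ to be nilpotent. This is exactly the kind of statement I expect to be isolated as one of the basic lemmas of \S\ref{s: groups}; granted such a lemma, the counting above goes through with constants depending only on $r$.
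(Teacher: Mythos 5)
Your overall architecture coincides with the paper's, and your upper bound is essentially the paper's argument: the paper covers the semisimple elements by the at most $N(G^F)$ maximal tori, and handles every element with nontrivial unipotent part by passing to $C_G(x_s)^0$, using that $x_u\in C_G(x_s)^0$, that these centralizers fall into at most $f(r)$ conjugacy classes, and that for each class the product (number of conjugates)$\,\times\,$(number of Sylow $p$-subgroups of the fixed-point group) is at most $|G^F|/|B^F|\le f(r)\,q^{|\Phi|}$ by the torus estimates of Lemma~\ref{l: torus bounds} --- precisely your computation. The nilpotency criterion you flag as the delicate step is not isolated as a lemma in \S\ref{s: groups}, but it is correct and is implicitly what makes the paper's cover work: if $C_G(x_s)^0=C_G(y_s)^0=M'$, then $x_s$ and $y_s$ centralize $M'$, hence each other and both unipotent parts, so $\langle x,y\rangle$ is a central extension of a quotient of $\langle x_u,y_u\rangle$.

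The genuine gap is in the lower bound, and it is exactly the one you flag yourself: the uniform existence, for every type and every Frobenius endomorphism, of a class of maximal tori each containing an element whose order is a prime $t\nmid |W|$ and which determines its torus uniquely. This is not a routine verification --- it is where the bulk of the paper's proof lives, and the paper does not use a single ``regular torus plus primitive prime divisor'' recipe. For the classical families it constructs explicit \emph{distinguished tori} from Singer cycles acting on carefully chosen subspaces (\S\ref{s: classical} and Lemma~\ref{l: classical}); in type $D_n$ the paper states that the resulting ppd-element is \emph{not} regular, so a separate argument using the dihedral structure of ${\rm O}_2^-(q)$ is needed to show that distinct conjugates of the torus meet in $t'$-groups. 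For the exceptional groups other than $E_7$ the paper instead invokes \emph{sharp} maximal tori (Proposition~\ref{p: sharp torus}), and $E_7$ --- which has no sharp torus --- receives a bespoke argument via a torus of order $\frac{1}{\delta}(q^7-1)$. Your recipe would also need reinterpretation for the Suzuki and Ree groups, where $q$ is an odd power of $\sqrt{2}$ or $\sqrt{3}$ and a cyclotomic value $\Phi_d(q)$ is not the right object, and for type $A_1$, where no primitive prime divisor is available and the paper falls back on its exact rank-one results (Theorems~\ref{thm: pgl2} and~\ref{thm: psl2}). The remaining ingredients you use are indeed in the paper: the coprimality $t\nmid|W|$ comes from Lemma~\ref{lem: fermat} for all but boundedly many $q$, and the passage from $|\mathcal{T}|$ to $N(G^F)$ is Proposition~\ref{p: maxtori}, matching your estimate via $|W|\le f(r)$ and $|T^F|\sim q^r$.
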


The number ${N(G^F)}$ is given by a result of Steinberg (Proposition \ref{p: steinberg} below) as ${q^{|\Phi|}}$, where ${q}$ is the level of the Frobenius enomorphism ${F}$, and ${\Phi}$ is a root system for ${G}$. In fact it would be possible, in the statement of Theorem \ref{thm: rank-dependent lower bound}, to take ${N(G^F)}$ instead to be the number of maximal tori in ${G^F}$ itself; this point is discussed in the proof of Proposition \ref{p: maxtori} below.

It seems reasonable to conjecture that an upper bound of the same form exists not just for ${\omega_\infty(G^F)}$ but for ${\omega_1(G^F)}$, and more speculatively, that the dependence on rank can be removed.
We may formulate the following conjecture.
\begin{conj}
There exist absolute constants ${D, E>0}$ such that, for any simple linear algebraic group ${G}$ and any Frobenius endomorphism ${F}$ of ${G}$, we have
\[
DN(G^F) \leq \omega_\infty(G^F) \leq \cdots \leq \omega_1(G) \leq E N(G^F),
\]
where ${N(G^F)}$ is the number of ${F}$-stable maximal tori in ${G}$.
\end{conj}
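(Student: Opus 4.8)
The plan is to build on Theorem~\ref{thm: rank-dependent lower bound} by isolating the two genuinely new features of the conjecture: an upper bound for the non-commuting number $\omegaone{G^F}$ (and not merely for $\omegan{G^F}$), and the removal of the dependence of the constants on the rank $r$. Since the chain $\omegan{G^F}\le\cdots\le\omegaone{G^F}$ holds for free, and since $N(G^F)=q^{\abs{\Phi}}$ by Steinberg (Proposition~\ref{p: steinberg}), everything is to be organised around $c$-nilpotent covers and the cover-counting inequalities of Proposition~\ref{p: N suffices} and Lemmas~\ref{lem: upper bound} and~\ref{lem: lower bound}: a lower bound on $\omegan{G^F}$ comes from exhibiting a large $2$-minimal nilpotent family, while an upper bound on $\omegaone{G^F}$ comes from exhibiting a small \emph{abelian} (that is, $1$-nilpotent) cover.

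For the lower bound $DN(G^F)\le\omegan{G^F}$ I would use the Sylow-counting method underlying Theorem~\ref{thm: putting sets together} and Lemma~\ref{lem: regular unipotent set}. Fix a primitive prime divisor $\ell$ of $q^{d}-1$ with $\ell\,\|\,\abs{G^F}$, so that the Sylow $\ell$-subgroups are cyclic and pairwise intersect trivially; choosing one element of order $\ell$ from each then gives a set in which any two members lie in distinct Sylow $\ell$-subgroups and so generate a subgroup that is not an $\ell$-group, hence (being generated by two $\ell$-elements) not nilpotent. The size of this set is the number $\abs{G^F}/\abs{N_{G^F}(S)}$ of Sylow $\ell$-subgroups, which is of order $q^{\abs{\Phi}}/d$. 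The intention is then to amalgamate several such families, over well-chosen values of $d$, and to verify that elements attached to different primes also generate non-nilpotent subgroups, so as to recover $q^{\abs{\Phi}}$ up to a constant factor; the difficulty in making that factor \emph{absolute} is that the normalizer index $d$ grows with the rank, so that no single prime suffices.

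For the upper bound $\omegaone{G^F}\le E\,N(G^F)$ the target is an abelian cover of size $O(q^{\abs{\Phi}})$. Since $\abs{G^F}\approx q^{\dim G}=q^{\,r+\abs{\Phi}}$, such a cover must use abelian subgroups of average order at least about $q^{r}$, which is precisely the order of a maximal torus; the maximal tori therefore dispose of the (regular) semisimple elements within budget. The remaining elements are treated by Jordan decomposition: an element $su$ with $s$ semisimple and $u$ unipotent lies in $C_{G}(s)^{F}$, and one recurses into these centralizers, which are reductive of smaller rank. The unipotent elements by themselves number exactly $q^{\abs{\Phi}}$ (a pleasant coincidence with $N(G^F)$), so even a cover of them by cyclic subgroups uses at most $q^{\abs{\Phi}}$ members; the work is to refine the non-abelian nilpotent pieces of the $\omega_\infty$-cover of Theorem~\ref{thm: rank-dependent lower bound} into abelian subgroups without inflating the total count beyond a constant factor.

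I expect the main obstacle to be exactly this rank-uniformity, which is why the statement is posed as a conjecture rather than a theorem. Both the number of conjugacy classes of maximal tori (governed by the conjugacy classes in $W$) and the number of unipotent classes grow with $r$, so the naive bookkeeping in the covers of Theorem~\ref{thm: rank-dependent lower bound} accumulates factors depending on $r$; removing them demands an amortised estimate showing that, although the classes are numerous, their combined contribution stays within a fixed multiple of $q^{\abs{\Phi}}$. For the $\omega_1$ upper bound this is sharpened by the fact that the Sylow $p$-subgroups of $G^F$ are non-abelian with derived length growing in $r$, so that an abelian cover of the unipotent set is genuinely more intricate than the nilpotent cover that suffices for $\omega_\infty$; controlling this refinement uniformly in the rank is the heart of the difficulty.
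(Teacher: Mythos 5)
The statement you have been asked to prove is posed in the paper as a \emph{conjecture}, and the paper contains no proof of it; the authors only record the partial evidence (the rank-dependent constants $D_r$, $E_r$ of Theorem~\ref{thm: rank-dependent lower bound}, the non-rank-dependent upper bound for $\PGL_n(q)$ from the literature, and the exact twisted-rank-$1$ computations of \S\ref{s: exact}). Your proposal is an accurate reconstruction of that evidence and of the strategy one would follow, but it is not a proof, and you say as much yourself: the passage ``I expect the main obstacle to be exactly this rank-uniformity, which is why the statement is posed as a conjecture rather than a theorem'' is an admission that the essential step is missing. Concretely, two gaps remain open. For the lower bound, your single-prime Sylow count loses a factor coming from $|N_{G^F}(T^F):T^F|$, which is controlled only by a function of the rank (this is exactly the content of Proposition~\ref{p: maxtori} and the constant $f(r)$ in its proof); your proposed remedy of ``amalgamating several such families over well-chosen values of $d$'' is never carried out, and it is not clear that the pairwise non-nilpotency conditions of Lemma~\ref{lem: separate sets} can be verified across enough primes to recover an absolute constant. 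For the upper bound, the paper's argument covers the unipotent part with Sylow $p$-subgroups and non-abelian centralizer pieces, which bounds $\omega_\infty$ but says nothing about $\omega_1$; replacing these by abelian subgroups without a rank-dependent blow-up is precisely the unsolved refinement you flag in your last paragraph.

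One smaller technical caution: your assertion that for a primitive prime divisor $\ell$ of $q^d-1$ the Sylow $\ell$-subgroups are cyclic and pairwise intersect trivially is not automatic in all types. The paper's own proof of the lower bound of Theorem~\ref{thm: rank-dependent lower bound} has to construct the distinguished tori case by case in \S\ref{s: classical}, verify regularity of the chosen element, and treat type $D_n$ separately (where the element $x$ is \emph{not} regular and trivial intersection of the relevant Sylow subgroups is established by a separate argument using the dihedral structure of ${\rm O}_2^-(q)$). Any attempt to upgrade the conjecture to a theorem along your lines would need to redo this case analysis with uniform control of the constants, which is exactly where the paper stops.
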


Theorem \ref{thm: rank-dependent lower bound} confirms the existence of a rank-dependent constant ${D}$.
In particular the existence of a constant ${D}$ is confirmed for groups of bounded rank, which includes all the exceptional groups.
The existence of a non-rank-dependent constant ${E}$ for the groups ${\PGL_n(q)}$ follows from the main result of \cite{aips}; and we note that the full
conjecture is also confirmed in the case of groups of twisted rank ${1}$ by the results of \S\ref{s: exact} of this paper.

In \S\ref{s: exact} we assume that~${G^F}$ is a finite group of Lie type where ${G}$ is simple and the twisted rank of ${G^F}$ is~${1}$. We prove that, for all
${c\in\longintegers}$, the group~${G^F}$~admits a ${2}$-minimal ${c}$-nilpotent cover. Furthermore we construct explicit examples of such covers and we calculate their order, thereby producing exact formulae for~${\omegac{G^F}}$ in each case.

The nature of these formulae is somewhat remarkable, and we discuss some of their characteristics in \S\ref{s: questions}. We suggest a number of questions and conjectures arising from these observations, and from other results in the paper.

\subsection{Background results}

The value of~${\omegan{G}}$ has been studied for various groups; it has usually been denoted
${\omega(\mathcal{N}_G)}$. Endimioni has proved that if a finite group~${G}$ satisfies ${\omegan{G}\leq 3}$, then~${G}$
is nilpotent, while if ${\omegan{G}\leq 20}$, then~${G}$ is solvable; furthermore these
bounds cannot be improved \cite{Endimioni}. Tomkinson has shown that
if~${G}$ is a finitely generated solvable group such that ${\omegan{G}=n}$, then
${|G/Z^*(G)|\leq n^{n^4}}$,
where~${Z^*(G)}$ is the hypercentre of~${G}$ (\cite{Tomkinson}). Also, for a finite
non-solvable group~${G}$, it has been proved by the first author and Hassanabadi that~${G}$ satisfies the
condition ${\omegan{G}=21}$ if and only if ${G/Z^*(G)\cong A_5}$
(see \cite[Theorem 1.2]{am}).

In addition to the results in \cite{am}, the computation of~${\omegan{G}}$ for particular classes of groups~${G}$ has recently
started to garner attention. In particular, the first author has given lower bounds for~${\omegan{G}}$ (\cite{azad}) when ${G=\GL_n(q)}$.
In a forthcoming paper by the second and third authors, a nilpotent cover of ${\GL_n(q)}$ is constructed which is ${2}$-minimal when ${q>n}$;
this construction will establish the exact value of~${\omega_\infty(\mathrm{GL}_n(q)})$ for almost all values of ${n}$ and ${q}$ (see~\cite{bg}), and an upper bound in the remaining cases.

The particular statistic ${\omegaone{G}}$ has attracted considerable recent attention, and been calculated for various groups~${G}$; much of this work has
concentrated on the case of almost simple groups ${G}$ (\cite{aamz, aips, ap, brown1, brown2}).

The study of non-commuting sets in a group~${G}$, including the study of~${\omegaone{G}}$, goes back many years. In
1976, B.\,H.~Neumann famously answered the following question of Erd\H{o}s from a few years earlier: if all non-commuting
sets in a group~${G}$ are finite, does there exist an upper bound ${n=n(G)}$ for the size of a non-com\-muting set in~${G}$ (i.e.\ is~${\omegaone{G}}$ finite)? Neumann answered this question affirmatively by showing that if all non-com\-muting sets in a group~${G}$ are finite, then ${|G:Z(G)|}$ is finite
\cite{neumann}. Pyber subsequently gave a strong upper bound for~${|G:Z(G)|}$, subject to the same condition on ${G}$ (\cite{pyber}).

Related to this area of study is the problem of calculating, for a finite ${2}$-generator group ${G}$, the size ${\mu(G)}$ of the largest subset
${X\subseteq G}$ such that any pair of elements of ${X}$ generate ${G}$. Such sets are closely related to covers of ${G}$ by proper subgroups (in just the same way that we have seen that non-${c}$-nilpotent sets are related to ${c}$-nilpotent covers). The statistic ${\mu(G)}$ has been studied for the symmetric and alternating groups in \cite{blackburn}, and for the groups ${\GL_n(q)}$ and ${\SL_n(q)}$ in \cite{BEGHM}.

\subsection{Connections to perfect graphs}

We note a connection with a generalization of the commuting graph of a finite group ${G}$. The commuting graph of ${G}$ is the graph ${\Gamma_1(G)}$ whose vertices are the elements of ${G}$, with an edge joining vertices ${x}$ and ${y}$ if and only if ${x}$ and ${y}$commute in ${G}$. (An alternative definition excludes
central elements of ${G}$; the distinction is unimportant here.)
There is an obvious correspondence between the maximal abelian subgroups of ${G}$, and the maximal cliques in the commuting graph ${\Gamma_1(G)}$. From this fact it follows that the minimal size of a covering of ${G}$ by abelian subgroups is equal to the \emph{clique cover number} of ${\Gamma_1(G)}$, i.e.\ the minimal number of cliques required to cover its vertices.

Suppose that ${G}$ has a ${2}$-minimal abelian cover. Then the set of distinguished elements form an independent set in ${\Gamma_1(G)}$, and it follows that the clique cover number and the independence number (being the maximal size of an independent set of vertices) are the same for ${\Gamma_1(G)}$. (It is obvious that the clique cover number is at least as big as the independence number.)

A graph is \emph{perfect} if the clique cover number and the independence number coincide for every induced subgraph. It appears that it is not known which finite groups have perfect commuting graphs.\footnote{Peter Cameron has recently discussed this problem on his blog, at \url{http://cameroncounts.wordpress.com/2011/02/01/perfectness-of-commuting-graphs/}.}

There is an obvious generalization of ${\Gamma_1(G)}$ as follows: for ${c\in\longintegers}$ define ${\Gamma_c(G)}$ to be the graph whose vertices are the elements of ${G}$, with an edge joining vertices ${x}$ and ${y}$ if and only if the subgroup ${\langle x,y\rangle}$ is ${c}$-nilpotent. We can generalize the earlier observation connecting abelian covers to properties of ${\Gamma_1(G)}$ in the following way.

\begin{prop}\label{p: graph theory}
A finite group ${G}$ has a ${2}$-minimal ${c}$-nilpotent cover~${\mathcal{N}}$ if and only if the clique cover number and the independence number of ${\Gamma_c(G)}$ coincide.
\end{prop}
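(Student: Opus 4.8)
The plan is to translate both sides of the equivalence into statements about two graph parameters and then to reconcile them. Write $\alpha$ for the independence number of $\Gamma_c(G)$ and $\bar\theta$ for its clique cover number. The first observation is that a subset of $G$ is independent in $\Gamma_c(G)$ precisely when it is non-$c$-nilpotent, so that $\alpha = \omegac{G}$; and, as already noted in the text, $\bar\theta \ge \alpha$ for every finite graph. I also record the elementary fact that every $c$-nilpotent subgroup of $G$ is a clique of $\Gamma_c(G)$, since any two of its elements generate a subgroup of it, which is again $c$-nilpotent.

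For the forward implication, suppose $\mathcal{N}$ is a $2$-minimal $c$-nilpotent cover, with distinguished elements $\{g_i\}$. The members of $\mathcal{N}$ are cliques covering $G$, so $\bar\theta \le |\mathcal{N}|$; the distinguished elements are pairwise non-adjacent, so they form an independent set and $\alpha \ge |\mathcal{N}|$. Combined with the general inequality $\bar\theta \ge \alpha$, this forces $\bar\theta = \alpha$ (and incidentally $|\mathcal{N}| = \alpha = \omegac{G}$, in agreement with Proposition \ref{p: N suffices}).

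For the converse, assume $\bar\theta = \alpha =: n$. Fix a maximum independent set $\{g_1, \dots, g_n\}$ and a minimum clique cover $\{C_1, \dots, C_n\}$. Since no clique can contain two independent vertices, and the $n$ cliques must between them cover all $n$ of the $g_i$, a pigeonhole argument shows that each clique contains exactly one $g_i$; relabel so that $g_i \in C_i$. As each $C_i$ lies in the closed neighbourhood of $g_i$, the covering condition gives that every $g \in G$ satisfies that $\langle g, g_i\rangle$ is $c$-nilpotent for some $i$. It then remains to produce from this an honest family of $n$ $c$-nilpotent subgroups covering $G$, whereupon Proposition \ref{p: N suffices}, together with the matching of each subgroup to its $g_i$, yields $2$-minimality exactly as in the forward direction.

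The main obstacle is precisely this last upgrade from cliques to subgroups. For $c=1$ it is automatic, because a set of pairwise-commuting elements generates an abelian group, so every clique of $\Gamma_1(G)$ lies in an abelian (hence clique-forming) subgroup, and maximal cliques are maximal abelian subgroups. For $c \ge 2$ this can fail: a clique of $\Gamma_c(G)$ need not be contained in any $c$-nilpotent subgroup, since three elements can pairwise generate $c$-nilpotent subgroups while together generating a non-nilpotent group. I would attempt to handle this by replacing each $C_i$ with a maximal $c$-nilpotent subgroup $X_i$ containing $g_i$, and arguing that $\{X_1, \dots, X_n\}$ still covers $G$: were some $h$ to lie outside every $X_i$, then $\{g_1, \dots, g_n, h\}$ would contain $n+1 > \omegac{G}$ elements and hence an adjacent pair, necessarily of the form $\langle h, g_i\rangle$, contradicting the choice of $X_i$. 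The delicate point, and the crux of the argument, is that a single distinguished element $g_i$ may lie in several maximal $c$-nilpotent subgroups; one must show that the hypothesis $\bar\theta = \alpha$ rules out this spreading (equivalently, that it forces the minimum clique cover to be realisable by $c$-nilpotent subgroups), so that a single subgroup per distinguished element suffices and the total count remains $n$.
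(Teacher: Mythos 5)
Your forward implication is correct and is essentially the paper's own argument: the members of $\mathcal{N}$ are cliques covering $G$, the distinguished elements form an independent set of size $|\mathcal{N}|$, and the chain $\alpha \ge |\mathcal{N}| \ge \bar\theta \ge \alpha$ (with $\alpha$ the independence number and $\bar\theta$ the clique cover number) collapses. The genuine gap is in the converse, and you have in effect flagged it yourself. The step ``were some $h$ to lie outside every $X_i$ \dots contradicting the choice of $X_i$'' is not a contradiction as written: knowing that $\langle h, g_i\rangle$ is $c$-nilpotent only places $h$ in \emph{some} maximal $c$-nilpotent subgroup containing $g_i$, not in the particular $X_i$ you fixed, and your final sentence concedes that a single $g_i$ may lie in several such subgroups. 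What you have established for the converse is a correct reduction --- the hypothesis $\bar\theta=\alpha$ gives, by pigeonhole, a minimum clique cover $\{C_1,\dots,C_n\}$ with exactly one distinguished $g_i$ in each $C_i$ --- plus an unproved claim that these cliques can be exchanged for $c$-nilpotent subgroups without increasing their number. That exchange is the entire content of the converse, so the proposal does not prove it.

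For comparison, the paper's converse is a one-line pigeonhole applied to a $c$-nilpotent cover $\mathcal{N}$ together with an independent set of cardinality $|\mathcal{N}|$: each member of $\mathcal{N}$ then contains a unique element of the independent set, and these elements witness $2$-minimality. That argument is clean, but observe that it \emph{starts} from a $c$-nilpotent cover whose size equals the independence number, which is exactly the object you were unable to manufacture from the hypothesis that the \emph{clique} cover number equals the independence number: for $c>1$ a clique of $\Gamma_c(G)$ need not lie in any $c$-nilpotent subgroup (as the paper's own remark about the order-$64$ groups of class $3$ illustrates), so a minimum clique cover does not automatically yield a $c$-nilpotent cover of the same cardinality. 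Your difficulty is therefore a real one that the published proof also passes over in silence; but judged as a proof of the stated proposition, your write-up is incomplete at precisely the point you identify.
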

\begin{proof}
Suppose that ${G}$ has a ${2}$-minimal ${c}$-nilpotent cover~${\mathcal{N}}$. It is clear that the clique cover number can be no larger than ${|\mathcal{N}|}$. The first implication therefore follows from the observation that the set of distinguished elements in members of ${\mathcal{N}}$ forms an independent set of size ${|\mathcal{N}|}$.

Conversely suppose that $\mathcal{N}$ is a $c$-nilpotent cover. If $I$ is an independent set of cardinality $\mathcal{N}$, then each element of $\mathcal{N}$ must contain a unique element of $I$, and so $\mathcal{N}$ is ${2}$-minimal.
\end{proof}

Note that Proposition~\ref{p: graph theory} implies that a necessary condition for ${\Gamma_c(G)}$ to be perfect is that ${G}$ admits a ${2}$-minimal ${c}$-nilpotent cover.

We remark that, when ${c>1}$, it is not necessarily true that the maximal cliques in the resulting graph ${\Gamma_c(G)}$ correspond to maximal ${c}$-nilpotent subgroups of~${G}$; for instance, there exist two ${3}$-nilpotent groups of order ${64}$ for which the graph~${\Gamma_2(G)}$ has maximal cliques of size ${40}$. Thus the proof of Proposition~\ref{p: graph theory} yields some extra information: we can conclude that if~${G}$ has a ${2}$-minimal ${c}$\hbox{-}nil\-potent cover ${\mathcal{N}}$, then ${\Gamma_c(G)}$ admits a minimal covering by (not necessarily maximal) cliques such that each clique corresponds to a maximal ${c}$-nilpotent subgroup of ${G}$.

In a different direction, one might define the graph ${\Gamma_c(G)}$ to be {\it predictable} if all maximal cliques in ${\Gamma_c(G)}$ correspond to
maximal ${c}$-nilpotent subgroups of ${G}$. One can then ask for which groups ${G}$, and which values of ${c}$, is the graph ${\Gamma_c(G)}$ predictable. In particular is it true that if ${G}$ is a simple group, then ${\Gamma_c(G)}$ will be predictable for all ${c\in\longintegers}$?

\mbox{}

\section{Background on linear algebraic groups}\label{s: lag}

The material in this section is drawn primarily from \cite{Carter}. Throughout the section, ${G}$ is a connected
reductive linear algebraic group over an algebraically closed field~${K}$ of characteristic ${p>0}$.

Since~${G}$ is a linear algebraic group, we can write ${G\leq \GL_n(K)}$ for some integer~${n}$. An element ${g\in G}$ is then
said to be {\it semisimple} if~${g}$ is diagonalizable in~${\GL_n(K)}$, and it is said to be {\it unipotent} if its
only eigenvalue is~${1}$. It is a fact that the condition for~${g}$ to be semisimple (respectively unipotent) is independent
of the embedding of~${G}$ into ${\GL_n(K)}$, and so we can say that~${g}$ is semisimple (respectively unipotent)
element without reference to any particular embedding.

A {\it unipotent subgroup} of~${G}$ is a closed subgroup, all of whose elements are unipotent. A {\it Borel subgroup}
of~${G}$ is a maximal connected closed solvable subgroup of~${G}$. Borel subgroups always exist, and all Borel subgroups
of~${G}$ are conjugate \cite[p.\,16]{Carter}. We can write ${B=UT}$, where~${U}$ is the unipotent radical of~${B}$
(i.e.\ the maximal connected normal unipotent subgroup of~${B}$) and~${T}$ is a maximal torus of~${B}$ (and hence of~${G}$).
Note that the union of all Borel subgroups of~${G}$ is the whole of~${G}$, and in particular,
any semisimple element of~${G}$ lies in a maximal torus of~${G}$.

The {\it rank} of~${G}$, written ${\rank(G)}$, is defined to be the dimension of a maximal torus in~${G}$. We write~${W}$ for the {\it Weyl group} of~${G}$ and recall that if ${T}$ is a maximal torus of ${G}$, then ${N_G(T)/T\cong W}$.

\subsection{Regular elements and centralizers}

Any unattributed statements in this section can be found in \cite[\S1.14]{Carter}. We note first that
${\dim C_G(g)\geq \rank(G)}$ for all ${g\in G}$, where ${C_G(g)}$ denotes the centralizer of~${g}$ in~${G}$. An element~${g}$
of~${G}$ is said to be \textit{regular} if ${\dim C_G(g)=\rank(G)}$. If~${g}$ is regular in~${G}$, then~${C_G(g)^0}$, the
connected component of~${C_G(g)}$, is commutative.

The key facts for our purposes concerning regular semisimple elements are given by the following result from \cite{steinberg3}.
\begin{prop}\label{p: regular semisimple}
Let~${G}$ be a semisimple group and let ${s\in G}$ be semisimple. Then the following conditions are equivalent.
\begin{enumerate}
\item ${s}$ is regular.
\item ${C_G(s)^0}$ is a maximal torus in~${G}$.
\item ${s}$ is contained in a unique maximal torus in~${G}$.
\end{enumerate}
\end{prop}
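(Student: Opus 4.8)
The plan is to establish the three equivalences from a single structural input: for a semisimple element $s$, the identity component $C_G(s)^0$ is a connected reductive group whose rank equals $\rank(G)$. This is a standard consequence of the structure theory of centralizers of semisimple elements, and is the one genuinely nontrivial ingredient I would import. Everything else then reduces to dimension bookkeeping together with two elementary facts about connected reductive groups: such a group has dimension equal to its rank precisely when it is a torus, and its centre is contained in every one of its maximal tori. Since $s$ is semisimple it lies in some maximal torus $T_0$ of $G$, which, being abelian and connected, is contained in $C_G(s)^0$; as $T_0$ has dimension $r := \rank(G)$ and $C_G(s)^0 \subseteq G$, the group $C_G(s)^0$ has rank exactly $r$.

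For the equivalence of (1) and (2), I would observe that a connected reductive group of rank $r$ has dimension $r + |\Phi|$, where $\Phi$ is its root system, and so has dimension exactly $r$ if and only if $\Phi$ is empty, that is, if and only if it is a torus; such a torus, having dimension $r$, is then a maximal torus of $G$. Because $\dim C_G(s) = \dim C_G(s)^0$, the element $s$ is regular, i.e. $\dim C_G(s) = r$, if and only if $C_G(s)^0$ is a maximal torus, which is exactly (1) $\Leftrightarrow$ (2).

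For (2) $\Rightarrow$ (3), suppose $C_G(s)^0 = T$ is a maximal torus. The torus $T_0$ above satisfies $T_0 \subseteq C_G(s)^0 = T$, and comparing dimensions gives $T_0 = T$, so $s \in T$. If $T'$ is any maximal torus containing $s$, the same argument yields $T' \subseteq C_G(s)^0 = T$ and hence $T' = T$, proving uniqueness. For (3) $\Rightarrow$ (2), note that $s \in T_0 \subseteq C_G(s)^0$ and that $s$ centralises all of $C_G(s)^0$, so $s \in Z(C_G(s)^0)$; since the centre of a connected reductive group lies in every maximal torus, $s$ lies in every maximal torus of $C_G(s)^0$. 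Each such maximal torus has dimension $r$ and is therefore a maximal torus of $G$. If $C_G(s)^0$ were not itself a torus it would possess more than one maximal torus, giving two distinct maximal tori of $G$ both containing $s$ and contradicting (3); hence $C_G(s)^0$ is a torus, necessarily maximal, which is (2).

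The main obstacle is not in the deductions above, which are routine once the structural facts are available, but in the input I am treating as known, namely that $C_G(s)^0$ is connected reductive of full rank. Proving this from scratch --- in particular the reductivity of the centralizer --- is substantially harder than the combinatorial argument that follows, and for the purposes of this paper I would cite it rather than reproduce it.
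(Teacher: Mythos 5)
Your argument is correct, but it is worth noting that the paper does not prove this proposition at all: it is quoted as a known result and attributed to Steinberg, so there is no internal proof to compare against. Your derivation is a legitimate self-contained route. You reduce everything to one structural input --- that $C_G(s)^0$ is connected reductive of full rank (it contains the maximal torus $T_0$ through $s$, since $T_0$ is connected and abelian) --- together with the dimension formula $\dim H = \rank(H) + |\Phi_H|$ and the fact that the centre of a connected reductive group lies in every maximal torus (because maximal tori are self-centralizing). The three deductions you give from these facts are all sound: (1)$\Leftrightarrow$(2) is immediate from the dimension formula; (2)$\Rightarrow$(3) follows because any maximal torus containing $s$ is a connected abelian subgroup of $C_G(s)$, hence lands inside $C_G(s)^0$ and is forced to equal it by dimension; and (3)$\Rightarrow$(2) correctly uses that $s \in Z(C_G(s)^0)$ lies in every maximal torus of $C_G(s)^0$, each of which is maximal in $G$, so non-torality of $C_G(s)^0$ would produce two distinct maximal tori through $s$ (more than one exists since $N_H(T)$ is a proper subgroup of $H$ when $H \neq T$). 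The only caveat is the one you flag yourself: the reductivity and connectedness statements about $C_G(s)^0$ are themselves theorems of Steinberg of comparable depth to the proposition being proved, so your proof essentially relocates the difficulty into the cited input --- which is a reasonable thing to do, and is in effect what the paper does wholesale by citing the result outright.
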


Every connected reductive group~${G}$ contains regular unipotent elements and any two are conjugate in~${G}$. Let
${u\in G}$ be regular and unipotent; then every semi\-simple element in~${C_G(u)}$ lies in~${Z(G)}$, the centre of~${G}$
\cite[Proposition~5.1.5]{Carter}. The following result will be useful \cite[Proposition~5.1.3]{Carter}.

\begin{prop}\label{p: regular unipotent}
Let~${G}$ be a connected reductive group and let ${u\in G}$ be unipotent. Then the following conditions on~${u}$ are equivalent.
\begin{enumerate}
\item ${u}$ is regular.
\item ${u}$ lies in a unique Borel subgroup of~${G}$.
\end{enumerate}
\end{prop}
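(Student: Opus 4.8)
The plan is to study, for a fixed unipotent element $u$, the set $\mathcal{B}_u$ of Borel subgroups of $G$ containing $u$, regarded as a closed subvariety of the flag variety $\mathcal{B} = G/B$ (the variety of all Borel subgroups, on which $G$ acts by conjugation). Since a unipotent element of a Borel subgroup $B$ necessarily lies in its unipotent radical, $\mathcal{B}_u$ is precisely the fixed-point set of $u$ acting on $\mathcal{B}$. As the union of all Borel subgroups is $G$, the set $\mathcal{B}_u$ is non-empty, and the proposition amounts to the assertion that $u$ is regular if and only if $\mathcal{B}_u$ consists of a single point.

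First I would compute the dimension of $\mathcal{B}_u$. The incidence variety $\tilde{\mathcal{U}} = \{(x,B') : x \in R_u(B')\}$ fibres over $\mathcal{B}$ with fibre the affine space $R_u(B')$, so it is smooth and irreducible of dimension $2N$, where $N = |\Phi^+| = \dim\mathcal{B}$; this equals the dimension of the unipotent variety $\mathcal{U}$. The projection $\pi \colon \tilde{\mathcal{U}} \to \mathcal{U}$ is proper with fibre $\mathcal{B}_x$ over each unipotent $x$, so it is dominant and generically finite. Steinberg's formula for the dimensions of these Springer fibres gives $\dim\mathcal{B}_u = \tfrac12(\dim C_G(u) - \rank(G))$; I would either cite this, or recover the inequality I need by a dimension count on $\pi^{-1}(\mathcal{O})$ for the various unipotent classes $\mathcal{O}$. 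Combined with the facts recorded above --- that $\dim C_G(u)\ge\rank(G)$ always, with equality exactly when $u$ is regular --- this shows that $u$ is regular if and only if $\dim\mathcal{B}_u = 0$, that is, if and only if $\mathcal{B}_u$ is a finite set of points.

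It remains to upgrade ``finite'' to ``a single point'', and this is the main obstacle: \emph{a priori} a regular unipotent element could lie in several Borel subgroups. I would resolve this by the explicit computation that shows the Springer resolution $\pi$ to be birational. Since all regular unipotent elements are conjugate and the property of lying in a unique Borel subgroup is conjugation-invariant, it suffices to treat one representative; fixing $B = TU$, I take $u = \prod_{\alpha\in\Delta} x_\alpha(1)$, the product over the simple root subgroups. Any other Borel $B'$ containing $u$ shares a maximal torus with $B$ (a standard fact for any two Borel subgroups), so after conjugating by an element of $B$ we may assume $T \le B'$, whence $B' = {}^{w}B$ for some $w\in W$. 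Then $u$ lies in $U \cap {}^{w}U$, whose image in $U/[U,U] \cong \prod_{\alpha\in\Delta} U_\alpha$ is the product of those $U_\alpha$ with $w^{-1}\alpha > 0$; since $u$ has a non-trivial component in each simple root subgroup, we must have $w^{-1}\alpha > 0$ for every $\alpha\in\Delta$, and this forces $w = 1$ and hence $B' = B$.

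Finally I would assemble the equivalence. The computation of the previous paragraph gives $(1)\Rightarrow(2)$ outright. For $(2)\Rightarrow(1)$ it is cleanest to argue the contrapositive using the dimension formula: if $u$ is not regular then $\dim\mathcal{B}_u > 0$, so $u$ lies in infinitely many Borel subgroups. Alternatively, one can observe that if $u$ lies in a unique Borel $B$ then $C_G(u)$ normalizes $B$, so $C_G(u)\le N_G(B) = B$, and feed this into a dimension bound to conclude that $u$ is regular. I expect the birationality step --- equivalently, the passage from finiteness to uniqueness of the fixed Borel --- to be the crux, with the dimension formula handling the remaining bookkeeping.
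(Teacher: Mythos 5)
A preliminary remark: the paper does not prove this proposition at all --- it is quoted directly from \cite[Proposition 5.1.3]{Carter} --- so there is no in-paper argument to measure yours against, and any complete proof you give is necessarily ``different from the paper's''. On its own terms your sketch follows the standard route (Springer fibres plus the Bruhat decomposition), and the direction $(1)\Rightarrow(2)$ can be assembled correctly from the ingredients you list. In fact you can avoid the slight circularity of asserting that $\prod_{\alpha\in\Delta}x_\alpha(1)$ is regular (the usual proof of \emph{that} passes through the unique-Borel property): your dimension count $\dim\pi^{-1}(\mathcal{O})=\dim\mathcal{O}+\dim\mathcal{B}_u\le 2N$ already gives $\dim\mathcal{B}_u\le\dim C_G(u)-\rank(G)=0$ for regular $u$, so $\mathcal{B}_u$ is finite; if the image of $u$ in $U/[U,U]$ had trivial component at some simple root $\alpha_0$, then $u$ would lie in the unipotent radical of the minimal parabolic $P_{\alpha_0}$ and hence in \emph{every} Borel subgroup of $P_{\alpha_0}$, an infinite family; so all simple components of $u$ are non-trivial, and your Bruhat computation --- which uses only this non-vanishing, not the specific element --- forces $w=1$.

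The genuine gap is in $(2)\Rightarrow(1)$. The elementary count on $\pi^{-1}(\mathcal{O})$ yields only the \emph{upper} bound $\dim\mathcal{B}_u\le\dim C_G(u)-\rank(G)$; the inequality your contrapositive needs, namely $\dim\mathcal{B}_u\ge\tfrac12(\dim C_G(u)-\rank(G))>0$ for non-regular $u$, is the hard half of the Steinberg--Spaltenstein theorem and cannot be ``recovered by a dimension count''. Citing it is legitimate, but you are then invoking a result substantially deeper than the proposition itself, and it should be flagged as the crux rather than as bookkeeping. Your fallback --- $C_G(u)\le N_G(B)=B$ ``fed into a dimension bound'' --- does not work as stated: a subregular unipotent element of $\GL_3$ also has connected solvable centralizer (of dimension $5$) contained in a Borel subgroup, so containment of $C_G(u)$ in a Borel places no useful bound on $\dim C_G(u)$. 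The elementary way to close this direction is the one already suggested by the minimal-parabolic observation above: uniqueness of the Borel forces every simple-root component of $u$ to be non-trivial, and one then computes directly (as Carter does in Proposition 5.1.1) that any such $u$ satisfies $\dim C_G(u)=\rank(G)$ --- an element $tv\in C_B(u)$ must have $\alpha(t)=1$ for every $\alpha\in\Delta$, and an induction along the descending central series of $U$ bounds $\dim C_U(u)$ by the semisimple rank. That computation is the genuinely technical step, and it is the one missing from your sketch.
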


The final proposition of this section follows from \cite[Theorem 1.12.5]{gls3}.
\begin{prop}\label{prop: t divides W}
Let~${G}$ be a simple algebraic group, and let ${W}$ be the Weyl group of~${G}$. If~${t}$ is a prime divisor of~${|Z(G)|}$, then~${t}$ divides~${|W|}$.
\end{prop}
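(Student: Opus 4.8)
The plan is to deduce the result from the structure of~$Z(G)$ provided by \cite[Theorem 1.12.5]{gls3}, after which the proposition reduces to a short verification over the types of irreducible root system. Since~$G$ is simple, its root system~$\Phi$ is irreducible. First I would reduce to the case where~$G$ is simply connected. For any isogeny type, the simply connected cover $G_{\mathrm{sc}}$ of the same type admits a central isogeny ${\pi\colon G_{\mathrm{sc}}\to G}$ whose kernel lies in~$Z(G_{\mathrm{sc}})$, and for semisimple~$G$ one has ${Z(G)=\pi(Z(G_{\mathrm{sc}}))}$; hence ${|Z(G)|}$ divides ${|Z(G_{\mathrm{sc}})|}$. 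It therefore suffices to prove the claim for $G_{\mathrm{sc}}$, where ${Z(G_{\mathrm{sc}})\cong P/Q}$ is the fundamental group of~$\Phi$ (the quotient of the weight lattice by the root lattice), whose order is the connection index of~$\Phi$.

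It then remains to check, type by type, that every prime dividing the connection index divides~${|W|}$. I would organise this using the classification of irreducible root systems. In every non-simply-laced case, and in the exceptional cases, the connection index is a power of a single small prime ${t\in\{2,3\}}$: for types ${B_\ell, C_\ell, D_\ell}$ and~${E_7}$ one has ${t=2}$, for~${E_6}$ one has ${t=3}$, and for ${E_8, F_4, G_2}$ the centre is trivial and there is nothing to prove. In each of these cases the relevant prime divides~${|W|}$ for an elementary reason: ${2\mid|W|}$ whenever ${W\neq 1}$, since~$W$ is generated by reflections; and for~${E_6}$ one verifies directly that ${3\mid|W|=2^7\cdot 3^4\cdot 5}$.

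The one case requiring slightly more care is type~${A_\ell}$, where ${G_{\mathrm{sc}}=\SL_{\ell+1}}$, the connection index is ${\ell+1}$, and ${|W|=(\ell+1)!}$. Here the prime divisors of~${|Z(G)|}$ need not be small, but every prime~$t$ dividing ${\ell+1}$ satisfies ${t\leq\ell+1}$ and hence divides ${(\ell+1)!}$, completing the verification. I expect this type-$A$ family to be the only genuine obstacle, in the sense that it is the only one in which ${|Z(G)|}$ can have arbitrarily large prime divisors; every other case collapses to the observation that the prime in question is~$2$ or~$3$ and so automatically divides the order of a nontrivial Weyl group.
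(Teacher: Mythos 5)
Your argument is correct and follows essentially the same route as the paper, which simply cites \cite[Theorem 1.12.5]{gls3} (the structure of $Z(G)$ in terms of the fundamental group of the root system) without further comment. Your write-up supplies the reduction to the simply connected case and the type-by-type check of connection indices against $|W|$ that the paper leaves implicit, and all of those verifications are accurate.
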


\subsection{Groups of Lie type}

A \emph{Frobenius endomorphism} of a reductive linear algebraic group ${G}$ is an algebraic endomorphism ${F}$ of ${G}$ with the property that there is an algebraic group embedding of ${G}$ into ${\GL_n(K)}$, for an algebraically closed field ${K}$ and ${n\in\N}$, and a power of ${F}$, say ${F^m}$, that is equal to the map on ${G}$ induced by a Galois automorphism of the field ${K}$. Let ${q_0}$ be the size of the fixed field of~${F^m}$. Then the \emph{level} of~${F}$ is defined to be
${q_0^{1/m}}$ (see \cite[Definition 2.1.9]{gls3}).

In what follows, we shall usually adopt the convention that ${q}$ is the level of the Frobenius map ${F}$. When dealing with the Ree groups ${{}^2B_2}(q)$ and ${{}^2G_2}(q)$
in Section~\ref{s: exact}, however, it will be convenient to take ${q}$ to be the square of the level of ${F}$ (which is ${q_0}$ in the notation above).

A Frobenius endomorphism is always an automorphism of ${G}$, considered as an abstract group, though the inverse map need not be algebraic.
We write ${G^F}$ for the fixed points of ${G}$ under ${F}$.
A {\it finite group of Lie type} is a group of the form~${G^F}$ where~${G}$ is and~${F}$
is a Frobenius endomorphism of~${G}$. We define the \emph{rank} of $G^F$ to equal the rank of $G$.

The {\it simple groups of Lie type} are defined in \cite[Definition 2.2.8]{gls3}; they are subgroups of~${G^F}$,
where~${G}$ is a simple linear algebraic group and~${F}$ is a Frobenius endomorphism.
A consequence of this terminology is that a simple group of Lie type is not necessarily a finite group of Lie type. For
the purposes of studying non-nilpotent sets, however, the next result renders the distinction irrelevant
(see the discussion following Lemma~\ref{lem: quasisimple}). 

\begin{lem}\label{lem: versions}
Let~${L}$ be a simple group of Lie type. Then ${L\cong G^F/Z(G^F)}$ for some simple linear algebraic group~${G}$ and
Frobenius endomorphism~${F}$.
\end{lem}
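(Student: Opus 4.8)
The plan is to realise $L$ using the \emph{simply connected} group of the appropriate type, for which the finite fixed-point group is quasisimple and the required quotient is visible directly. By the definition in \cite[Definition 2.2.8]{gls3}, the given simple group $L$ is a subgroup of $H^F$ for some simple linear algebraic group $H$ and Frobenius endomorphism $F$; taking $H$ of adjoint type, this subgroup is $O^{p'}(H^F)$, the subgroup generated by the unipotent elements of $H^F$. I would first pass to the simply connected group $G$ of the same type, equipped with a Frobenius endomorphism (still denoted $F$) compatible with the isogeny $\pi\colon G\to H$, in the sense that $\pi\circ F=F\circ\pi$; the existence of such a compatible $F$ is standard structure theory, as is the fact that $\ker\pi\leq Z(G)$.

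The argument then rests on one principal input and two routine observations. The principal input, which I would cite rather than reprove, is that for $G$ simple and simply connected the group $G^F$ is \emph{quasisimple} outside a short, explicit list of degenerate cases: that is, $G^F$ is perfect and $G^F/Z(G^F)$ is simple (Steinberg; see also \cite{gls3}). Granting this, the first observation is that $\pi$ maps $G^F$ onto $O^{p'}(H^F)=L$: since $\pi$ is bijective on unipotent elements and $G^F$ is perfect, we have $\pi(G^F)=\pi(O^{p'}(G^F))=O^{p'}(H^F)$, where the first equality uses $G^F=O^{p'}(G^F)$ (perfectness) and the second uses that $\pi$ preserves unipotents together with the Lang--Steinberg theorem. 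The second observation is that the kernel $K=\ker\pi\cap G^F$ of the resulting surjection $G^F\to L$ satisfies $K\leq Z(G^F)$, because $\ker\pi\leq Z(G)$; hence $L\cong G^F/K$ with $K$ central.

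The identification of the centre is now immediate and does not even require quasisimplicity a second time. Since $L\cong G^F/K$ is a non-abelian simple group and $K$ is central, the image $Z(G^F)/K$ is a central, hence trivial, subgroup of $L$; combined with $K\leq Z(G^F)$ this forces $K=Z(G^F)$. Therefore $L\cong G^F/Z(G^F)$ with $G$ a simple linear algebraic group, which is the assertion.

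The main obstacle is the list of degenerate cases excluded from the quasisimplicity statement, where $G^F$ fails to be perfect. Most of these produce groups that are not simple and so fall outside the hypothesis, while the exceptional isomorphisms such as $\Sp_4(2)'\cong A_6$, $G_2(2)'\cong\PSU_3(3)$ and ${}^2G_2(3)'\cong\PSL_2(8)$ each admit an alternative, non-degenerate Lie-type realisation (for example $A_6\cong\PSL_2(9)$) to which the argument above applies. The genuinely delicate case is the Tits group ${}^2F_4(2)'$: here $H=G$ is of type $F_4$ with trivial centre, so $\pi$ is the identity and $G^F={}^2F_4(2)$ is non-perfect, whence $O^{p'}(G^F)\neq G^F=G^F/Z(G^F)$. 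This case has to be reconciled with the precise scope of \cite[Definition 2.2.8]{gls3}, and confirming that the definition's list of simple groups of Lie type is exactly the list for which quasisimplicity of the simply connected $G^F$ holds is the one piece of bookkeeping over the finitely many small fields that the proof genuinely requires.
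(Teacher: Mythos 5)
Your argument is correct and is essentially the paper's: the proof in the paper is simply a citation of \cite[Theorem 2.2.6\,(f)]{gls3} and \cite[Definition 2.5.10]{gls3}, which encode exactly the facts you reconstruct (a simply connected $G$ with a Frobenius endomorphism compatible with the isogeny onto the adjoint group, quasisimplicity of $G^F$ outside a short list of small cases, and the resulting isomorphism $O^{p'}(H^F)\cong G^F/Z(G^F)$). The one loose end you flag, the Tits group, is settled by the cited definition itself: ${}^2F_4(2)'$ is not of the form $O^{p'}(\overline{G}^F)$ for any simple algebraic group (the $2$-elements of ${}^2F_4(2)$ generate the whole group, not the derived subgroup), so it is not a simple group of Lie type in the sense of \cite[Definition 2.2.8]{gls3} and lies outside the hypothesis of the lemma.
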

\begin{proof}
This follows from \cite[Theorem 2.2.6\,(f)]{gls3} and \cite[Definition 2.5.10]{gls3}.
\end{proof}

If~${F}$ is a Frobenius endomorphism of the group~${G}$, and if~${H}$ is a closed subgroup of~${G}$,
then we write~${H^F}$ for the set of points of~${H}$ fixed by~${F}$.

\subsection{Borel subgroups and tori in~${G^F}$}\label{s: tori intro}

We have already mentioned Borel subgroups and tori in the group~${G}$. We extend this notion to the group~${G^F}$ as
in \cite[\S1.18]{Carter}. Note first that a subgroup~${H}$ of~${G}$ which satisfies ${F(H)=H}$ is said to be {\it ${F}$-stable}.

A {\it Borel subgroup} of~${G^F}$ is a subgroup of the form~${B^F}$, where~${B}$ is an ${F}$-stable Borel subgroup of~${G}$.
Any two Borel subgroups of~${G^F}$ are conjugate in~${G^F}$; what is more, if we write ${B=UT}$ as above, then
\cite[(69.10)]{cr2} implies that ${N_{G^F}(U^F)=B^F}$.

A {\it maximal torus} of~${G^F}$ is a subgroup of the form~${T^F}$, where~${T}$ is an ${F}$-stable maximal torus of~${G}$.
An ${F}$-stable maximal torus of~${G}$ is said to be {\it maximally split} if it lies in an ${F}$-stable Borel subgroup
of~${G}$, and a maximal torus of~${G^F}$ is said to be {\it maximally split} if it has the form~${T^F}$ for some maximally
split torus~${T}$ in~${G}$. Any two maximally split tori of~${G^F}$ are conjugate in~${G^F}$.

It follows from the Lang-Steinberg theorem that the group $G$ contains an $F$-stable Borel subgroup and that any $F$-stable Borel subgroup contains an $F$-stable maximal torus of $G$. Thus $G^F$ contains both a Borel subgroup and a maximally split maximal torus.

The following result of Steinberg gives a precise count for the number of $F$-stable maximal tori in $G$ (see \cite[Theorem 3.4.1]{Carter}; we remark that the result as stated there applies also to the Ree and Suzuki groups). Recall that $q$ is defined to be the level of the Frobenius endomorphism $F$.

\begin{prop}\label{p: steinberg}
Let ${\Phi}$ be the root system for ${G}$. The number of ${F}$-stable maximal tori in~${G}$ is~${q^{|\Phi|}}$.
\end{prop}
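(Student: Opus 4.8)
The plan is to parametrize the $F$-stable maximal tori by the Weyl group, to count those of each type using the Lang--Steinberg theorem, and then to evaluate the resulting sum by means of a generating-function identity for $W$. First I would fix an $F$-stable maximally split maximal torus $T_0$ (which exists by the discussion in \S\ref{s: tori intro}), with normalizer $N_G(T_0)$ and Weyl group $W = N_G(T_0)/T_0$. Since all maximal tori of $G$ are conjugate, the variety of maximal tori is identified with $G/N_G(T_0)$ via $g\,N_G(T_0)\mapsto gT_0g^{-1}$. Because $F(T_0)=T_0$, the torus $gT_0g^{-1}$ is $F$-stable precisely when $g^{-1}F(g)\in N_G(T_0)$, and this is a well-defined condition on the coset since $N_G(T_0)$ is itself $F$-stable. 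Projecting $g^{-1}F(g)$ into $W$ attaches to each such coset an element $w\in W$; replacing $g$ by $gn$ replaces $w$ by $v^{-1}wF(v)$, where $v$ is the image of $n$ in $W$, so each $F$-stable torus acquires a well-defined \emph{type}, namely an $F$-conjugacy class in $W$.

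Next I would show that the $F$-stable tori of a fixed type form a single $G^F$-orbit, and that a torus $T_w$ of type $w$ satisfies $N_{G^F}(T_w)/T_w^F \cong C_{W,F}(w)$, the stabilizer of $w$ under $F$-conjugation; both facts follow from the Lang--Steinberg theorem applied to $G$ and to $N_G(T_0)$. The number of tori of type $w$ is therefore $\abs{G^F}/\bigl(\abs{T_w^F}\,\abs{C_{W,F}(w)}\bigr)$. Summing over types, and using that the $F$-class of $w$ has size $\abs{W}/\abs{C_{W,F}(w)}$ to rewrite the sum over classes as a sum over all of $W$, I obtain
\[ N(G^F) = \frac{\abs{G^F}}{\abs{W}}\sum_{w\in W}\frac{1}{\abs{T_w^F}}. \]

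The remaining step is a computation. Writing $Y$ for the cocharacter lattice of $T_0$ and $V=Y\otimes\R$, the action of $F$ on $Y$ has the form $q\phi$ for a finite-order isometry $\phi$ of $V$, and $\abs{T_w^F}=\abs{\det_V(q\phi w-1)}$. In the split case $\phi=1$, so for large $q$ this equals $q^{r}\det_V(1-q^{-1}w)$ up to sign, with $r=\rank(G)$, and the sum becomes $q^{-r}\sum_{w}\det_V(1-q^{-1}w)^{-1}$. The Molien--Shephard--Todd--Solomon identity evaluates $\tfrac{1}{\abs{W}}\sum_{w}\det_V(1-tw)^{-1}=\prod_{i=1}^{r}(1-t^{d_i})^{-1}$, where $d_1,\dots,d_r$ are the degrees of $W$; taking $t=q^{-1}$ and combining with Steinberg's order formula $\abs{G^F}=q^{\abs{\Phi^+}}\prod_i(q^{d_i}-1)$, the factors $\prod_i(q^{d_i}-1)$ cancel and one is left with $q^{\,\abs{\Phi^+}-r+\sum_i d_i}$. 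The standard identities $\sum_i d_i=\abs{\Phi^+}+r$ and $2\abs{\Phi^+}=\abs{\Phi}$ then give $N(G^F)=q^{\abs{\Phi}}$, as claimed.

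The hard part will be the non-split and very-twisted cases. For split groups the argument above is clean, but in general one must carry the automorphism $\phi$ through both the torus-order computation and the Molien identity, replacing each by its $\phi$-equivariant analogue while checking that the same cancellation persists; and for the Suzuki and Ree groups $\phi$ has order involving $\sqrt{p}$, so one must be careful with the convention (fixed in \S\ref{s: tori intro}) that $q$ denotes the level of $F$. I would also need to verify rigorously the two Lang--Steinberg consequences invoked above---that the $F$-stable tori of a given type form a single $G^F$-orbit, and that the relative normalizer $N_{G^F}(T_w)/T_w^F$ is the $F$-centralizer $C_{W,F}(w)$---since these underpin the entire count.
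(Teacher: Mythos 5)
The paper offers no proof of this proposition at all: it is quoted from Steinberg via \cite[Theorem 3.4.1]{Carter}, with only the remark that the statement there covers the Suzuki and Ree groups as well. Your sketch is, in substance, the proof given in that cited reference, and its skeleton is sound: the classification of $F$-stable maximal tori by $F$-conjugacy classes of $W$, the transitivity of $G^F$ on the tori of a fixed type, the isomorphism $N_{G^F}(T_w)/T_w^F\cong C_{W,F}(w)$, and the formula $\abs{T_w^F}=\abs{\det_V(q\phi w-1)}$ are Carter's Propositions 3.3.1--3.3.6, and the reduction to $\frac{\abs{G^F}}{\abs{W}}\sum_{w\in W}\abs{T_w^F}^{-1}$ is exactly how the count is organised there. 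The steps you defer are genuine but standard: the twisted Shephard--Todd--Solomon identity $\frac{1}{\abs{W}}\sum_{w}\det_V(1-t\phi w)^{-1}=\prod_{i}(1-\eps_i t^{d_i})^{-1}$ pairs with the twisted order formula $\abs{G^F}=q^{\abs{\Phi^+}}\prod_i(q^{d_i}-\eps_i)$ so that the roots of unity $\eps_i$ cancel just as the $1$'s do in the split case, and the identities $\sum_i d_i=\abs{\Phi^+}+r$ and $2\abs{\Phi^+}=\abs{\Phi}$ refer to the untwisted root system and so survive twisting. Your caution about the very twisted groups is warranted but resolves favourably: with $q$ the level of $F$ (so $q=2^{m+\frac12}$ for ${}^2\reduce B_2(2^{2m+1})$ and $\abs{\Phi}=8$ for $B_2$), one gets $q^{\abs{\Phi}}=2^{4(2m+1)}=(\abs{G^F}_p)^2$, consistent with the value $q^4$ quoted in the proof of Theorem~\ref{thm: Sz} after the paper's local redefinition of $q$. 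Two cosmetic points: since $\det_V(q\phi w-1)=q^r\det_V(\phi w)\det_V(1-q^{-1}(\phi w)^{-1})$, the Molien sum you actually need runs over $(\phi w)^{-1}$ rather than $\phi w$, which is harmless once you sum over all of $W$; and the absolute values cause no trouble because $\det_V(1-q^{-1}\phi w)>0$ whenever $q>1$ and $\phi w$ has finite order.
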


We have already introduced the notation ${N(G^F)}$ for the number of ${F}$-stable maximal tori in ${G}$ (see Theorem~\ref{thm: rank-dependent lower bound}). Note that if $q$ is sufficient large, then the number of maximal tori in~${G^F}$ is equal to the number of ${F}$-stable maximal tori in ${G}$ (see \cite[Proposition 3.6.6]{Carter}).

\subsection{Sylow~${t}$-subgroups in~${G^F}$}\label{s: sylowt}

We are interested in the Sylow structure of the group~${G^F}$, for which we refer to~\cite{gls3,springersteinberg}.
Note first the elementary
facts that an element ${g\in G^F}$ is unipotent if and only if it has order ${{p^\ell}}$ for some ${\ell\geq 0}$, while~${g}$ is semisimple
if and only if~${p}$ does not divide the order of~${g}$.

In order to study the structure of a Sylow ${p}$-subgroup, we consider an ${F}$-stable Borel subgroup~${B}$ of~${G}$.
If we write ${B=UT}$ as above, then since~${B}$,~${U}$ and~${T}$ are all ${F}$-stable, we have ${B^F=U^F\rtimes T^F}$. Now
\cite[Theorem 2.3.4]{gls3} implies that~${U^F}$ is a Sylow ${p}$-subgroup of~${G^F}$.

We shall require the following fact, which follows from \cite[II.5.19]{springersteinberg}.

\begin{prop}\label{p: springersteinberg}
Let~${t}$ be a prime dividing~${|G^F|}$, with ${t\neq p}$, and suppose that~${X}$ is a Sylow ${t}$-subgroup of~${G^F}$.
Then~${X}$ lies inside~${N_G(T)}$, where~${T}$ is an ${F}$-stable
maximal torus of~${G}$. If~${t}$ is coprime to~${|W|}$, then~${X}$ lies inside~${T}$ itself
(and so, in particular,~${X}$ lies inside~${T^F}$, a maximal torus of~${G^F}$).
\end{prop}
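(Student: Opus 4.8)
The plan is to prove both assertions by a short fixed-point argument together with a quotient argument, using Steinberg's count of $F$-stable maximal tori (Proposition~\ref{p: steinberg}); this reproves the content of \cite[II.5.19]{springersteinberg} in a self-contained way that suits our setting.

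First I would treat the inclusion $X\leq N_G(T)$. Let $\mathcal{T}$ be the set of all $F$-stable maximal tori of $G$. Because $X\leq G^F$, conjugation by elements of $X$ preserves $\mathcal{T}$: for $x\in X$ and $T\in\mathcal{T}$ one has $F(xTx^{-1})=xF(T)x^{-1}=xTx^{-1}$, using $F(x)=x$ and $F(T)=T$, so $xTx^{-1}\in\mathcal{T}$. This makes $\mathcal{T}$ into an $X$-set, where $X$ is a $t$-group. By Proposition~\ref{p: steinberg} we have $|\mathcal{T}|=q^{|\Phi|}$, which is a power of $p$ and hence coprime to $t$. Every $X$-orbit on $\mathcal{T}$ has size a power of $t$, so the number of fixed points is congruent to $|\mathcal{T}|$ modulo $t$ and is therefore nonzero. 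A fixed point is precisely an $F$-stable maximal torus $T$ with $xTx^{-1}=T$ for all $x\in X$, that is, one with $X\leq N_G(T)$.

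For the second assertion I would assume in addition that $t$ is coprime to $|W|$ and exploit the fact that $T$ is normal in $N_G(T)$ with $N_G(T)/T\cong W$. Composing the inclusion $X\hookrightarrow N_G(T)$ with the quotient map $N_G(T)\to N_G(T)/T\cong W$ gives a homomorphism whose image is a $t$-subgroup of $W$. Since $t\nmid|W|$, this image is trivial, so $X$ lies in the kernel $T$. As $X\leq G^F$ as well, this forces $X\leq T\cap G^F=T^F$, which is a maximal torus of $G^F$, as claimed.

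The only point needing genuine care is in the first step: one must verify that the $X$-action on $\mathcal{T}$ is well defined, which is where the hypothesis $X\leq G^F$ is used, and that the Steinberg count $q^{|\Phi|}$ really is coprime to $t$, i.e.\ a power of the defining characteristic $p$. The latter holds in every case, including the Suzuki and Ree groups, under the conventions fixed in \S\ref{s: tori intro}; once it is granted, the remaining steps are routine orbit-counting and quotient arguments.
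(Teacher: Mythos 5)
Your argument is correct. The paper itself gives no proof of this proposition --- it simply cites \cite[II.5.19]{springersteinberg} --- so what you have written is a self-contained replacement for that citation rather than a variant of an argument in the text. Your two steps are sound: the orbit-counting argument is valid because conjugation by $F$-fixed elements permutes the set of $F$-stable maximal tori, every orbit of the $t$-group $X$ has $t$-power size, and Proposition~\ref{p: steinberg} gives $|\mathcal{T}|=q^{|\Phi|}$, which is an integer power of $p$ in every case (including the Suzuki and Ree groups, where $q$ itself is a fractional power of $p$ but $|\Phi|$ is even) and hence prime to $t$; so $X$ fixes some $T\in\mathcal{T}$, i.e.\ $X\leq N_G(T)$. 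The second step, passing to $N_G(T)/T\cong W$ and using $t\nmid|W|$ to force $X\leq T$ and hence $X\leq T\cap G^F=T^F$, is exactly right and uses only facts recorded in \S\ref{s: lag}. Note that your proof actually establishes the statement for an arbitrary $t$-subgroup of $G^F$, not just a Sylow $t$-subgroup, which is harmless. The one thing your write-up buys over the paper's citation is transparency; the one thing it costs is that you are implicitly re-deriving (the relevant special case of) the Springer--Steinberg result, so you should be sure that Proposition~\ref{p: steinberg} as invoked really does apply in the generality you need --- the paper's statement of it covers the Ree and Suzuki cases, so it does.
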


Let ${x}$ and ${n}$ be positive integers. A \emph{primitive prime divisor} (also sometimes called a \emph{Zsigmondy prime}) of ${x^n-1}$ is a prime divisor ${s}$ which
does not divide ${x^k-1}$ for any ${k<n}$. A well-known theorem of Zsigmondy \cite{zsig} (in part also due to Bang \cite{bang}) states that if ${x>1}$ and ${n>2}$, then there
exists a primitive prime divisor of ${x^n-1}$ except in the case ${(x,n)=(2,6)}$. A primitive prime divisor exists also when ${n=2}$, unless
${x=2^b-1}$ for some~${b}$.

We shall need the following simple inequality.

\begin{lem}\label{lem: fermat}
Let ${p}$ be prime, and let ${q=p^a}$ for some ${a\in\N}$. Let ${n>1}$ be such that a primitive prime divisor of ${q^n-1}$ exists. Then
there is a primitive prime divisor ${r}$ of ${q^n-1}$ such that ${r > an}$.
\end{lem}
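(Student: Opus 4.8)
The plan is to deduce the lemma from the Zsigmondy/Bang theorem applied not to $q^n-1$ but to $p^{an}-1$. Since $q=p^a$ we have $q^n-1=p^{an}-1$, and the key observation is that a primitive prime divisor of $p^{an}-1$ automatically serves as the required $r$. Indeed, suppose $r$ is a primitive prime divisor of $p^{an}-1$, so that the multiplicative order of $p$ modulo $r$ is exactly $an$. First, $an\mid r-1$ by Fermat's little theorem, whence $r\geq an+1>an$, which gives the desired inequality for free. Second, $r$ is in fact a primitive prime divisor of $q^n-1$: certainly $r\mid q^n-1$, and if $r\mid q^k-1=p^{ak}-1$ for some $0<k<n$ then $an\mid ak$, forcing $n\mid k$, which is impossible. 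Hence it suffices to produce a primitive prime divisor of $p^{an}-1$.

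Next I would invoke the theorem of Zsigmondy and Bang quoted above with base $x=p$ and exponent $m=an$. Since $n>1$ and $a\geq 1$ we have $m=an\geq 2$, so a primitive prime divisor of $p^{an}-1$ exists outside the finitely many exceptional cases listed in the theorem. In every non-exceptional case the argument of the first paragraph yields $r>an$ at once, so the whole substance of the lemma is concentrated in treating the exceptions.

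The main obstacle is therefore the exceptional-case analysis, which has to be carried out by direct inspection. The exceptions to the existence of a primitive prime divisor of $p^{an}-1$ are (i) $an=2$ with $p=2^b-1$ Mersenne, and (ii) $(p,an)=(2,6)$. I would dispatch most of these by noting that the hypothesis of the lemma --- that $q^n-1$ itself possesses a primitive prime divisor --- is already vacuous. In case (i), $n>1$ forces $n=2$ and $a=1$, so $q=p=2^b-1$ and $q^n-1=p^2-1=(p-1)(p+1)$ with $p+1=2^b$; every prime dividing $p^2-1$ then divides $p-1$, so $q^n-1$ has no primitive prime divisor and there is nothing to prove. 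In case (ii) with $(a,n)=(1,6)$ we have $q^n-1=2^6-1$, which is precisely the classical exception and again admits no primitive prime divisor. The residual cases $(a,n)\in\{(2,3),(3,2)\}$, where $q\in\{4,8\}$ and $q^n-1=63$, must be examined directly by listing the primitive prime divisors and comparing them with $an=6$; this small computation is where the statement genuinely has to be checked rather than inherited from Zsigmondy, and it is delicate precisely because a primitive prime divisor of $q^n-1$ need not be one of $p^{an}-1$, so the clean reduction of the first paragraph does not apply to it.
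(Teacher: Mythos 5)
Your reduction to base $p$ --- observing that $q^n-1=p^{an}-1$, that a primitive prime divisor $r$ of $p^{an}-1$ is automatically a primitive prime divisor of $q^n-1$, and that $r>an$ because the order of $p$ modulo $r$ is exactly $an$ and divides $r-1$ --- is exactly the paper's argument; the paper phrases the last step via Euler's theorem, $\phi(r)\ge an$, which for prime $r$ is the same statement. Your handling of the Zsigmondy exceptions for the pair $(p,an)$ also matches: the case $an=2$ with $p$ Mersenne and the case $(a,n)=(1,6)$ are correctly dismissed because the hypothesis of the lemma is then vacuous.

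The gap is that you stop short of the ``small computation'' for the residual cases $(a,n)\in\{(2,3),(3,2)\}$, and this omission is not harmless. For $(a,n)=(2,3)$, $q=4$: since $3\mid 4-1$, the only primitive prime divisor of $4^3-1=63$ to base $4$ is $7$, and $7>an=6$, so the conclusion holds. But for $(a,n)=(3,2)$, $q=8$: the prime divisors of $8^2-1=63$ are $3$ and $7$; since $7\mid 8-1$, the only primitive prime divisor of $8^2-1$ to base $8$ is $3$, and $3\not>an=6$. So the statement as written actually fails at $(q,n)=(8,2)$; your instinct that this case is ``delicate'' and cannot be inherited from the base-$p$ reduction was correct, but the verification you deferred cannot be completed. (The paper's own proof dismisses this with ``it is easy to check that the result holds also for the exceptional case,'' which is wrong for this sub-case. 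The error is harmless where the lemma is applied, since the exponent there is always at least $3$, but a correct proof must either add the hypothesis $q^n\neq 64$, or exclude $(q,n)=(8,2)$ explicitly, or record it as a genuine exception to the conclusion.)
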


\begin{proof}
It is clear from Zsigmondy's theorem that if a primitive prime divisor of~${q^n-1}$ exists, and if~${q^n\neq 64}$, then a primitive prime divisor
to the base~${p}$ also exists; that is, a divisor ${r}$ of ${p^{an}-1}$ which does not divide ${p^b-1}$ for any~${b<an}$.
Now Euler's theorem states that ${p^{\phi(r)}\equiv 1\bmod r}$, where
${\phi}$ is the totient function. It follows that ${\phi(r)\ge an}$, and hence that ${r>an}$. It is easy to check that the result holds also for the exceptional case in Zsigmondy's theorem, arising when ${q^n=64}$.
\end{proof}

\subsection{Classical groups}\label{s: classical}

For certain finite groups of Lie type $G^F$ it is the case that $G^F$ is isomorphic to a central quotient, or a central extension, of a group of similarities of
a non-degenerate or zero form $\kappa$, on a vector space $V$ over a field $\mathbb{F}_{q^{\alpha}}$. These groups $G^F$ are the {\it classical groups of Lie type}.

Table~\ref{t: weyl} lists the relevant families, with the order of the root system $\Phi$, the order of the associated Wey group $W$, the name of the associated isometry group, and the value of $\alpha$. We include restrictions on the values of $q$ and $n$ in order to reduce the number of groups $G^F$ that are considered more than once. 
Note that the dimension of the vector space $V$ is equal to the subscript on the isometry group name; we refer to this number as the {\it dimension} of the group $G^F$.

\renewcommand{\arraystretch}{1.2}
\begin{table}[t]
\begin{center}
\begin{tabular}{cccccc}
\hline
${\Phi}$ & ${|\Phi|}$ & ${|W|}$ & ${\rm Isom}(\kappa)$ & $\alpha$ &
\\ \hline
${A_n}$ & ${n(n-1)}$ & ${n!}$ & $\begin{array}{c} \GL_{n+1}(q) \\ \GU_{n+1}(q) \end{array}$ & $\begin{array}{c} 1 \\ 2 \end{array}$ & $\begin{array}{c} \\  n\geq 2 \end{array}$\\ \hline
 ${B_n}$ & ${2n^2}$ & ${2^nn!}$ & ${\rm O}_{2n+1}(q)$ & 1 & $q$ odd, $n\geq 3$\\ \hline
${C_n}$ & ${2n^2}$ & ${2^nn!}$ & ${\Sp_{2n}(q)}$ & 1 & $n\geq 2$
\\ \hline
${D_n}$ & ${2n(n-1)}$ & ${2^{n-1}n!}$ & $\begin{array}{c} {\rm O}^+_{2n}(q) \\ {\rm O}^-_{2n}(q)\end{array}$ & 1 &  $\begin{array}{c} n\geq 4 \\  n\geq 4 \end{array}$
\\ \hline \\
\end{tabular}
\caption{\label{t: weyl}The families of groups~${L}$ of classical type.}
\end{center}
\end{table}

In the proof of Theorem~\ref{thm: rank-dependent lower bound}, we shall use properties of particular families of maximal tori in the classical groups $G^F$. We call these maximal tori {\it distinguished} and we describe them here in terms of the associated formed space $V$. We write $L$ for the group of similarities that is either a central quotient or central extension of $G^F$.

Suppose that $G^F$ is of type {\bf $A_n$}, {\bf $C_n$}, {\bf ${^2D_n}$} or {\bf ${{}^2A_{2m}}$}. Then it is well known (see, for instance \cite{bereczky}), that $L$ contains an irreducible cyclic subgroup. Let $X$ be of maximal order amongst such subgroups (a {\it Singer cycle}), and let $g$ be a generator of $X$. The irreducibility of $X$ guarantees that $g$ has distinct eigenvalues. Write $h$ for the projection (or for a lift) of $g$ in  $G^F$. Then $h$ is regular, and so by Proposition~\ref{p: regular semisimple} $h$ lies in a unique maximal torus $T^F$ of $G^F$. The conjugates of $T^F$ in $G^F$ are the distinguished tori in this case.

Suppose that $G^F$ is of type {\bf ${{}^2A_{2m-1}}$}. In this case $\SU_{2m}(q)\leq L \leq \GU_{2m}(q)$. Let $v$ be an anisotropic vector in $V$, and consider the stabilizer $H$ of $\langle v \rangle$ in $L$. The group $H$ contains a subgroup isomorphic to $\GU_{2m-1}(q)$ which acts faithfully by isometries on $\langle v\rangle^\perp$. Now, as in the previous paragraph, this subgroup contains a cyclic subgroup $X$ that acts irreducibly on $\langle v\rangle^\perp$. Taking $X$ to be of maximal order amongst such subgroups, and defining $g$ and $h$ as before, we conclude that $h$ is regular and lies in a unique maximal torus $T^F$ of $G^F$. The conjugates of $T^F$ are the distinguished tori in this case.

Suppose that $G^F$ is of type {\bf $B_n$}. In this case, again, $L$ is a group of isometries. We let $v$ be an anisotropic vector in $V$ such that $\langle v\rangle^\perp$ is a space of type ${\rm O}_{2n}^-$. Now the analysis proceeds as in the previous paragraph.

Suppose lastly that $G^F$ is of type {\bf $D_n$}. In this case we let $W$ be an anisotropic subspace of dimension $2$ and observe that $W^\perp$ is a space of type ${\rm O}_{2n-2}^-$. Now $L$ contains a subgroup $H$ isomorphic to $\Omega_2^-(q)\times \Omega_{2n-2}^-(q)$. We choose $X$ to be the direct product of a cyclic group $H_1$ of maximal order in $H$ acting irreducibly on $W$ and trivially on $W^\perp$, and a cyclic group $H_2$ of maximal order in $H$ acting irreducibly on $W^\perp$ and trivially on $W$.  Now choose $g$ so that the projection onto $H_1$ generates $H_1$ and the projection onto $H_2$ generates $H_2$. Then $g$ has distinct eigenvalues; its projection (or lift) $h$ in $G^F$ is regular, and lies in a unique maximal torus $T^F$ of $G^F$. The conjugates of $T^F$ are the distinguished tori in this case.

The following lemma will be essential.
\begin{lem}\label{l: classical}
 Let $G^F$ be a classical group of Lie type of dimension $d$, and let $T^F$ be a distinguished torus in $G^F$.
 \begin{enumerate}
  \item If $G^F$ is of type $A_n$, $C_n$, ${^2D_n}$ or ${{}^2A_{2m}}$, then let $t$ be a primitive prime divisor of $q^d-1$,
  \item If $G^F$ is of type ${{}^2A_{n}}$ or $B_n$, then let $t$ be a primitive prime divisor of $q^{d-1}-1$,
  \item If $G^F$ is of type $D_n$, then let $t$ be a primitive prime divisor of $q^{d-2}-1$,
 \end{enumerate}
provided, in each case, that a primitive prime divisor exists. Then in all cases where $t$ is defined, the torus $T^F$ contains an element of order $t$.
\end{lem}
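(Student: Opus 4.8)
The plan is to verify that, in each of the listed cases, the distinguished torus $T^F$ contains a cyclic subgroup whose order is divisible by the relevant factor $q^m-1$ (where $m=d$, $m=d-1$, or $m=d-2$ according to the case), and then to observe that a primitive prime divisor $t$ of $q^m-1$, when it exists, must divide the order of $T^F$. Since $T^F$ is abelian, it then contains an element of order $t$ by Cauchy's theorem. The work is therefore in computing the order of each distinguished torus, or at least in exhibiting a large cyclic subgroup of it.

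I would proceed case by case, following the constructions given just before the statement. In the first case ($A_n$, $C_n$, ${}^2D_n$, ${}^2A_{2m}$), the distinguished torus arises from a Singer cycle $X$ acting irreducibly on the $d$-dimensional formed space $V$. A Singer cycle in a group acting irreducibly on a $d$-dimensional space over the appropriate field is cyclic of order divisible by $q^d-1$ (for the linear and symplectic cases this is the classical $q^d-1$; for the unitary and twisted cases the irreducible cyclic subgroup has order a divisor of $q^d-1$ of the expected shape, which is exactly why the statement takes a primitive prime divisor of $q^d-1$). The element $h$ lies in the unique maximal torus $T^F$ containing it, and since $X$ (or its image) is contained in $C_{G^F}(h)^0{}^F=T^F$, the order $|T^F|$ is divisible by $|X|$ and hence by $q^d-1$. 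Applying the primitive prime divisor $t\mid q^d-1$ gives $t\mid |T^F|$, so by Cauchy $T^F$ has an element of order $t$.

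For the remaining cases the construction strips off an anisotropic piece and applies the Singer-cycle argument to a subspace of codimension $1$ or $2$. In case (2), for type ${}^2A_n$ the relevant irreducible cyclic subgroup acts on $\langle v\rangle^\perp$, a space of dimension $d-1$, giving a cyclic group of order divisible by $q^{d-1}-1$; the type $B_n$ case is identical once $\langle v\rangle^\perp$ is taken of minus type of dimension $2n=d-1$. In case (3), type $D_n$, the direct product $H_1\times H_2$ has an order divisible by the product of the two Singer orders, and in particular by the order of the cyclic group acting irreducibly on the $(2n-2)$-dimensional minus-type space $W^\perp$, which is divisible by $q^{2n-2}-1=q^{d-2}-1$. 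In each instance $h$ is regular (its eigenvalues being distinct by the irreducibility of the action on the relevant subspace together with the anisotropic action on the complement), so by Proposition~\ref{p: regular semisimple} it lies in a unique maximal torus $T^F$, which therefore contains the cyclic subgroup just described and hence an element of order $t$.

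The main obstacle, and the only point requiring genuine care, is the bookkeeping in the twisted cases: confirming that the irreducible cyclic subgroups in $\GU$ and in the minus-type orthogonal groups really do have orders divisible by the stated factor $q^d-1$ or $q^{d-1}-1$ (rather than, say, $q^{d}+1$ or some other cyclotomic value), and that the field of definition $\mathbb{F}_{q^\alpha}$ with $\alpha\in\{1,2\}$ from Table~\ref{t: weyl} is correctly accounted for so that the Singer order is expressed as a polynomial in $q$ and not in $q^\alpha$. These facts are standard (see \cite{bereczky}), but they are precisely where the distinction between the three cases of the lemma originates, and they are what dictate the choice of exponent $d$, $d-1$, or $d-2$ in each case. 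Once the correct divisibility is established, the passage to an element of order $t$ via Cauchy's theorem is immediate.
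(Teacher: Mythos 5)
Your proposal is essentially the paper's own proof, which consists of a single sentence deferring to the definition of the distinguished tori together with the list of Singer-cycle cardinalities in Table 1 of \cite{bereczky}; your expansion---exhibiting the relevant irreducible cyclic subgroup inside $T^F$, checking that the primitive prime divisor $t$ divides its order, and concluding via Cauchy's theorem---is precisely the verification that citation is standing in for. The one point of imprecision is your claim in case (1) that $|T^F|$ is divisible by $q^d-1$: in the twisted subcases the Singer orders are cyclotomic values such as $q^{d/2}+1$ or $q^{d}+1$ rather than multiples of $q^d-1$, so the divisibility of $|T^F|$ by $t$ must be checked against those actual orders from Bereczky's table (as you yourself flag in your final paragraph), not deduced from divisibility by $q^d-1$.
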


Note that in light of the restrictions on $n$ listed in Table~\ref{t: weyl}, Zsigmondy's theorem implies that $t$ is defined in all cases except possibly when $G^F$ is of type $A_1$, or when $q=2$.

\begin{proof}
 The lemma follows directly from our definition of a distinguished torus, together with the list of cardinalities of Singer cycles found in \cite[Table 1]{bereczky}.
\end{proof}

\section{Lemmas on non-nilpotent sets}\label{s: groups}

In this section~${H}$ is a group and~${c}$ is an element of~${\longintegers}$. We begin with some results which are useful for giving upper
and lower bounds for~${\omegac{H}}$. The first result, which is based on \cite[Lemma 2.4]{ap}, requires us first to
extend the definition of ${\omega_c}$ to subsets of groups: for ${X\subseteq H}$,
we write~${\omegac{X}}$ for the size of the largest non-${c}$-nilpotent subset of~${X}$.

\begin{lem}\label{lem: upper bound}
 Let ${Y_1, \dots, Y_k}$ be subsets of~${H}$ such that ${Y_1\cup\cdots \cup Y_k=H}$. Then
\[ \omegac{H}\leq \omegac{Y_1}+\cdots +\omegac{Y_k}. \]
\end{lem}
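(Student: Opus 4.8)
The plan is to start from a non-$c$-nilpotent subset $S\subseteq H$ of maximum size, so that $|S|=\omegac{H}$, and then to distribute its elements among the covering sets $Y_1,\dots,Y_k$. Since the $Y_i$ cover $H$, every $s\in S$ lies in at least one of them; choosing, for each $s$, a single index $i$ with $s\in Y_i$ partitions $S$ into disjoint pieces $S_1,\dots,S_k$ with $S_i\subseteq Y_i$ and $S=S_1\sqcup\cdots\sqcup S_k$.

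The key observation is that the property of being non-$c$-nilpotent is hereditary: it is a condition imposed on \emph{pairs} of elements, namely that $\langle x,y\rangle$ is not $c$-nilpotent, so any subset of a non-$c$-nilpotent set is again non-$c$-nilpotent. In particular each $S_i$, being a subset of $S$, is a non-$c$-nilpotent subset of $Y_i$, whence $|S_i|\le\omegac{Y_i}$ directly from the definition of $\omega_c$ on subsets of a group.

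Summing over $i$ then yields
\[ \omegac{H}=|S|=\sum_{i=1}^{k}|S_i|\le\sum_{i=1}^{k}\omegac{Y_i}, \]
which is the desired inequality.

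There is essentially no obstacle here; the only points requiring any care are the hereditary property just noted, and the observation that overlaps among the $Y_i$ can be resolved by assigning each element of $S$ to a single covering set, so that the cardinalities genuinely add. If one prefers not to make such a choice, the same bound follows at once from $|S|\le\sum_{i=1}^{k}|S\cap Y_i|$ together with $|S\cap Y_i|\le\omegac{Y_i}$.
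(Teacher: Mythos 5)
Your proof is correct and follows essentially the same route as the paper, which simply notes that for a maximal non-$c$-nilpotent set $\Omega$ one has $\abs{\Omega\cap Y_i}\leq\omegac{Y_i}$ and sums; your closing remark is exactly that argument, and the partition version is only a cosmetic variant.
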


\begin{proof}
Suppose that~${\Omega}$ is a non-${c}$-nilpotent set in~${H}$. Then clearly ${|\Omega \cap Y_i|\leq \omegac{Y_i}}$
for ${i=1,\dots, k}$, and the result follows.
\end{proof}

\begin{lem}\label{lem: lower bound}
Let ${X_1,\dots, X_k, Y_1, \dots, Y_k}$ be subsets of~${H}$. Suppose that the following hold:
\begin{enumerate}
\item for ${i=1,\dots, k}$, ${X_i}$ is a non-${c}$-nilpotent subset of~${Y_i}$ of maximum order,
\item if ${1\leq i<j\leq k}$ and ${x_i\in X_i, x_j\in X_j}$, then ${\langle x_i, x_j\rangle}$ is not ${c}$-nilpotent.
\end{enumerate}
Then ${X_1\cup \cdots\cup X_k}$ is a non-${c}$-nilpotent subset of~${H}$, and
\[ \omegac{H}\geq \omegac{Y_1}+\cdots  + \omegac{Y_k}. \]
\end{lem}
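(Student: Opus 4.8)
The plan is to exhibit the set $X := X_1 \cup \cdots \cup X_k$ explicitly as a non-$c$-nilpotent subset of $H$ and to compute its cardinality; since $\omegac{H}$ is by definition the maximum size of such a subset, the stated inequality will follow at once from $\omegac{H} \ge |X|$.

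First I would check that $X$ is non-$c$-nilpotent. Let $x$ and $y$ be distinct elements of $X$. If both happen to lie in a single $X_i$, then hypothesis (1), namely that $X_i$ is a non-$c$-nilpotent subset of $Y_i$, tells us that $\langle x, y\rangle$ is not $c$-nilpotent. Otherwise I may choose indices $i \neq j$ with $x \in X_i$ and $y \in X_j$, and after relabelling so that $i < j$, hypothesis (2) gives exactly that $\langle x, y\rangle$ is not $c$-nilpotent. In either case the generated subgroup fails to be $c$-nilpotent, so $X$ is non-$c$-nilpotent, which proves the first assertion.

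The only step needing genuine care is the cardinality count, where I want $|X| = |X_1| + \cdots + |X_k|$; this requires the sets $X_i$ to be pairwise disjoint, which is not assumed directly. I would deduce disjointness from hypothesis (2): were some element $x$ to lie in $X_i \cap X_j$ with $i < j$, then taking $x_i = x_j = x$ in (2) would force $\langle x_i, x_j\rangle = \langle x\rangle$ not to be $c$-nilpotent; but a cyclic group is abelian, hence nilpotent of class at most $1$, and so $c$-nilpotent for every $c \in \longintegers$ --- a contradiction. Thus $X_i \cap X_j = \emptyset$ whenever $i \neq j$, so $|X| = \sum_i |X_i|$, and by hypothesis (1) each $|X_i| = \omegac{Y_i}$.

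Combining these, $X$ is a non-$c$-nilpotent subset of $H$ of size $\omegac{Y_1} + \cdots + \omegac{Y_k}$, whence $\omegac{H} \ge |X| = \omegac{Y_1} + \cdots + \omegac{Y_k}$, as required. I expect the disjointness observation in the third paragraph to be the main (and essentially the only) obstacle, since without it one recovers only $|X| \le \sum_i \omegac{Y_i}$ and the inequality could fail; the rest is a routine two-case verification.
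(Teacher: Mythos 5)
Your proof is correct and follows the same direct route as the paper, whose own proof simply declares both assertions to be immediate consequences of the hypotheses. Your explicit verification that the $X_i$ are pairwise disjoint --- obtained by applying hypothesis (2) with $x_i = x_j$ and noting that a cyclic group is $c$-nilpotent for every $c$ --- supplies the one detail the paper leaves unstated, and is exactly the point where the count $|X_1\cup\cdots\cup X_k|=\sum_i\omegac{Y_i}$ needs justification.
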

\begin{proof}
The fact that ${X_1\cup \cdots \cup X_k}$ is a non-${c}$-nilpotent subset of~${H}$ follows immediately from our suppositions, and
the lower bound stated for ${\omegac{H}}$ is an obvious consequence.
\end{proof}

As mentioned in the introduction, Lemmas~\ref{lem: upper bound} and~\ref{lem: lower bound} put into a more general form the
connection between nilpotent covers and non-nilpotent sets already estab\-lished in Proposition~\ref{p: N suffices}.
These lemmas have a number of useful special cases: we refer in particular to \cite[Lemma 3.2]{azad} and \cite[Lemma 3.3]{azad}.

The following result is a slightly refined version of the latter. For a prime ${p}$, we write~${\nu_p(H)}$ for the number of Sylow ${p}$-subgroups of
a finite group~${H}$.

\begin{lem}\label{lem: sylow}
Let~${H}$ be finite and let~${p}$ be a prime number dividing~${|H|}$. Let the distinct Sylow $p$-subgroups of~$H$ be
${P_1, P_2, \ldots, P_{\nu_p(H)}}$.  Suppose that
\begin{equation}\label{e: sylow}
P_1\setminus\bigcup_{i=2}^{\nu_p(H)} P_i\neq \emptyset.
\end{equation}
Then the set ${\{P_1,\dots, P_{\nu_p(H)}\}}$ is ${2}$-minimal, and there exists a non-nilpotent set ${\Omega\subseteq H}$
such that all elements of~${\Omega}$ are ${p}$-elements, and such that ${|\Omega|=\nu_p(H)}$.
\end{lem}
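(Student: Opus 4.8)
The plan is to observe that, since all Sylow $p$-subgroups of $H$ are conjugate, the hypothesis~(\ref{e: sylow}) is really a statement about \emph{every} Sylow $p$-subgroup, and to extract from it a single family of witnessing elements that simultaneously establishes $2$-minimality (for $c=\infty$) and constitutes the required non-nilpotent set $\Omega$.

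First I would transfer the hypothesis from $P_1$ to each $P_i$. Fix $g\in P_1\setminus\bigcup_{i\geq 2}P_i$, and for each $i$ choose $h_i\in H$ with $h_iP_1h_i^{-1}=P_i$, which is possible by Sylow's theorem. Setting $g_i=h_igh_i^{-1}\in P_i$, a short argument shows that $g_i$ lies in no Sylow $p$-subgroup of $H$ other than $P_i$: if $g_i\in P_j$ then $g=h_i^{-1}g_ih_i\in h_i^{-1}P_jh_i$, a Sylow $p$-subgroup of $H$, and by the choice of $g$ this must be $P_1=h_i^{-1}P_ih_i$, forcing $P_j=P_i$. In particular the $g_i$ are pairwise distinct (a common value would lie in two distinct Sylow $p$-subgroups), so $|\{g_1,\dots,g_{\nu_p(H)}\}|=\nu_p(H)$, and each $g_i$ is a $p$-element.

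The heart of the argument is to check that the $g_i$ are distinguished, i.e.\ that $\langle g_i,g_j\rangle$ is not nilpotent whenever $i\neq j$. I would argue by contradiction: if $\langle g_i,g_j\rangle$ were nilpotent, then being finite nilpotent it is the direct product of its Sylow subgroups, so it has a unique Sylow $p$-subgroup $Q$ containing all of its $p$-elements; in particular $g_i,g_j\in Q$. As a $p$-subgroup of $H$, $Q$ lies in some Sylow $p$-subgroup $P$ of $H$, and then $g_i\in P$ gives $P=P_i$ while $g_j\in P$ gives $P=P_j$, by the previous paragraph. This contradicts $i\neq j$.

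Both conclusions then follow at once. The family $\{P_1,\dots,P_{\nu_p(H)}\}$ consists of $p$-subgroups, hence nilpotent subgroups, and the elements $g_i$ witness its $2$-minimality by definition; simultaneously $\Omega=\{g_1,\dots,g_{\nu_p(H)}\}$ is a set of $p$-elements of size $\nu_p(H)$, no two of which generate a nilpotent subgroup, i.e.\ a non-nilpotent set of the stated size. I expect the only genuine obstacle to be the step establishing that the $g_i$ are distinguished; its essential ingredient is the standard structural fact that every $p$-element of a finite nilpotent group lies in the group's unique Sylow $p$-subgroup, which in turn embeds into a Sylow $p$-subgroup of the ambient group $H$.
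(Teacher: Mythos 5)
Your proposal is correct and follows essentially the same route as the paper: conjugate the hypothesised element of $P_1$ into each Sylow $p$-subgroup to obtain one distinguished element per subgroup, then use the fact that a finite nilpotent group has a unique Sylow $p$-subgroup to show no two of these elements generate a nilpotent group. The paper's write-up is more terse but the underlying argument is the same.
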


\begin{proof}
Note first that the condition \eqref{e: sylow} is equivalent to the assertion that~${H}$ contains elements which lie
in a unique Sylow ${p}$-subgroup of~${H}$. Since the Sylow ${p}$-subgroups of~${H}$ are conjugate, we see that every
Sylow ${p}$-subgroup ${P_i}$ of~${H}$ contains an element ${w_i}$ which lies in no other Sylow
${p}$-subgroup. Let~${\Omega}$ be the set ${\{w_i \mid i=1,\dots,\nu_p(H)\}}$. Then clearly ${|\Omega|=\nu_p(H)}$.

Let~${w_i}$ and~${w_j}$ be distinct elements of ${\Omega}$, and let ${W=\langle w_i, w_j\rangle}$.
Since no ${p}$\hbox{-}sub\-group of ${H}$ contains both ${w_i}$ and ${w_j}$, it follows that the Sylow ${p}$-subgroups of ${W}$ are not
normal in ${W}$, and so ${W}$ is not nilpotent. We conclude that~${\Omega}$ is a non-nilpotent set, and hence
that the family ${\{P_1,\dots, P_{\nu_p(H)}\}}$ is ${2}$-minimal.
\end{proof}

We will use Lemma~\ref{lem: sylow} in association with the following result.

\begin{lem}\label{lem: separate sets}
Let~${H}$ be finite and suppose that ${p_1, \dots, p_n}$ are distinct prime divisors of~${|H|}$.
For ${i=1,\dots, n}$, let~${\Omega_i}$ be a non-${c}$-nilpotent subset of~${H}$ such that the orders of the elements
of~${\Omega_i}$ are powers of~${p_i}$. Suppose that for ${w_i\in \Omega_i}$ and~${w_j\in\Omega_j}$,
we have ${[w_i, w_j]\neq 1}$ whenever ${i\neq j}$. Then
\[ \omegac{H} \geq |\Omega_1| + |\Omega_2| + \cdots + |\Omega_n|. \]
\end{lem}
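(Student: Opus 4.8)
The plan is to obtain this as a direct consequence of Lemma~\ref{lem: lower bound}, applied with the choice $X_i = Y_i = \Omega_i$ for each $i = 1, \dots, n$. With this choice, hypothesis~(1) of that lemma holds trivially: by assumption $\Omega_i$ is non-$c$-nilpotent, and since it exhausts $Y_i = \Omega_i$ it is automatically a non-$c$-nilpotent subset of $Y_i$ of maximum order, with $\omegac{\Omega_i} = |\Omega_i|$. Granting hypothesis~(2), the lemma then delivers $\omegac{H} \geq \omegac{\Omega_1} + \cdots + \omegac{\Omega_n} = |\Omega_1| + \cdots + |\Omega_n|$, which is exactly the desired bound.

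So the whole task reduces to verifying hypothesis~(2): for $i \neq j$, $w_i \in \Omega_i$ and $w_j \in \Omega_j$, the subgroup $\langle w_i, w_j \rangle$ must fail to be $c$-nilpotent. This is the one genuinely group-theoretic step, and I would argue it as follows. By assumption $w_i$ has order a power of $p_i$ and $w_j$ a power of $p_j$, so their orders are coprime; and by assumption $[w_i, w_j] \neq 1$. Now I recall the standard fact that a finite nilpotent group is the internal direct product of its Sylow subgroups, so that any two of its elements of coprime order commute. Were $\langle w_i, w_j \rangle$ nilpotent, this fact would force $[w_i, w_j] = 1$, a contradiction. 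Hence $\langle w_i, w_j \rangle$ is not nilpotent, and therefore not $c$-nilpotent for any $c \in \longintegers$, since $c$-nilpotency entails nilpotency by definition.

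I do not anticipate any serious obstacle: the lemma is really a packaging result, combining the observation of the previous paragraph with the combinatorial machinery already set up in Lemma~\ref{lem: lower bound}. The only points needing a word of care are that the non-commuting hypothesis $[w_i, w_j] \neq 1$ silently excludes the identity from each $\Omega_i$ (so one never has to treat an element of order $p_i^0 = 1$), and that the conclusion ``not nilpotent'' must be promoted to ``not $c$-nilpotent'' uniformly across all $c$, including the finite values, which is immediate from the definitions.
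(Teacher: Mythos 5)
Your proof is correct and takes essentially the same route as the paper's: both deduce the result from Lemma~\ref{lem: lower bound} together with the observation that two non-commuting elements of coprime prime-power order cannot generate a nilpotent (hence $c$-nilpotent) group. The paper states this observation without proof; your justification via the Sylow decomposition of finite nilpotent groups is exactly the intended one.
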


\begin{proof}
The result follows easily from Lemma~\ref{lem: lower bound}, once we have made the following observation:
 if~${w_i}$ and~${w_j}$ are group elements of prime power order for distinct primes, then the subgroup
${\langle w_i, w_j\rangle}$ is nilpotent if and only if ${[w_i, w_j]=1}$.
\end{proof}

We end with two related results. The first result is a restated (and slightly generalized) version of
\cite[Lemma 3.1]{azad}, which we state here without further proof. The second is a useful corollary to the first.

\begin{lem}\label{lem: 1}
Let~${N}$ be a normal subgroup of~${H}$ and let~${\Omega}$ be a non-${c}$-nilpotent subset of~${H/N}$. Any set of representatives
of~${\Omega}$ in~${H}$ is a non-${c}$-nilpotent subset of~${H}$.
\end{lem}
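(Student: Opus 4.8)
The plan is to prove Lemma~\ref{lem: 1} by a straightforward pullback argument, exploiting the fact that the property of being non-$c$-nilpotent is detected on pairs of elements and that nilpotency of a two-generator subgroup is inherited under quotients. First I would set up notation: let $\pi\colon H\to H/N$ denote the canonical projection, let $\Omega\subseteq H/N$ be a non-$c$-nilpotent set, and let $S\subseteq H$ be any set of representatives, so that $\pi$ restricts to a bijection from $S$ onto $\Omega$ (this is the content of ``set of representatives''). I must show that $S$ is non-$c$-nilpotent in $H$, i.e.\ that for any two distinct $x,y\in S$ the subgroup $\langle x,y\rangle$ is not $c$-nilpotent.

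The heart of the argument is the following contrapositive observation. Fix distinct $x,y\in S$ and suppose, for contradiction, that $\langle x,y\rangle$ \emph{is} $c$-nilpotent. The key fact is that $\pi(\langle x,y\rangle)=\langle \pi(x),\pi(y)\rangle$, since a homomorphism carries the subgroup generated by a set to the subgroup generated by the images. Now the image of a nilpotent group of class at most $c$ under a homomorphism is again nilpotent of class at most $c$: a quotient of a $c$-nilpotent group is $c$-nilpotent, because the lower central series maps onto the lower central series, so $\gamma_{c+1}(\pi(\langle x,y\rangle))=\pi(\gamma_{c+1}(\langle x,y\rangle))=\pi(1)=1$. (The case $c=\infty$ is handled identically: a quotient of a nilpotent group is nilpotent.) Hence $\langle \pi(x),\pi(y)\rangle$ would be $c$-nilpotent in $H/N$. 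But $\pi(x)$ and $\pi(y)$ are distinct elements of $\Omega$ (distinct because $\pi$ is injective on $S$), contradicting the assumption that $\Omega$ is non-$c$-nilpotent. Therefore $\langle x,y\rangle$ is not $c$-nilpotent, as required.

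There is essentially no hard part here; the lemma is a formal consequence of two elementary facts about homomorphic images. The only point requiring a moment of care is the uniform treatment of the finite $c$ and the $c=\infty$ cases, which the lower-central-series formulation handles simultaneously, and the observation that distinctness in $S$ transfers to distinctness in $\Omega$ (needed so that we genuinely invoke the non-$c$-nilpotency of $\Omega$ on a genuine pair). Since the excerpt explicitly states this lemma ``here without further proof,'' the expected role of this result is purely as a convenient packaging of the pullback principle, to be used later in conjunction with Lemma~\ref{lem: quasisimple} and the reduction from $G^F$ to its central quotients; the proof above is the natural one-line justification behind that omission.
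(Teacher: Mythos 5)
Your argument is correct: the paper states this lemma without proof (citing it as a restatement of a lemma from \cite{azad}), and your pullback argument --- that a $c$-nilpotent $\langle x,y\rangle$ would project onto a $c$-nilpotent $\langle \pi(x),\pi(y)\rangle$ generated by two distinct elements of $\Omega$ --- is exactly the intended justification. Nothing is missing.
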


\begin{lem}\label{lem: quasisimple}
Let~${Y}$ be a central subgroup of~${H}$. A set ${\Omega\subseteq H}$ is non-${c}$-nilpotent if and only if the set
\begin{align*}
\Omega/Y = \{hY \mid h\in \Omega\}
\end{align*}
is non-${c}$-nilpotent in~${H/Y}$ and ${|\Omega|=|\Omega/Y|}$.
\end{lem}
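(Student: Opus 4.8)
The plan is to prove Lemma~\ref{lem: quasisimple} by exploiting the fact that when $Y$ is central, the commutator structure---and indeed the whole lower central series---of a subgroup $\langle x, y\rangle$ is controlled by its image in $H/Y$. I would first establish the cardinality claim and then the equivalence of the two non-$c$-nilpotency conditions, handling both directions.

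First I would argue that $|\Omega| = |\Omega/Y|$ is forced by the hypothesis. Suppose $\Omega$ is non-$c$-nilpotent in $H$. If two distinct elements $x, y \in \Omega$ had the same image in $H/Y$, then $y = xz$ for some $z \in Y$; since $z$ is central, $\langle x, y\rangle = \langle x, z\rangle$ is abelian, hence $1$-nilpotent and so $c$-nilpotent for every $c \in \longintegers$, contradicting that $\Omega$ is non-$c$-nilpotent. Thus the natural map $\Omega \to \Omega/Y$ is injective and $|\Omega| = |\Omega/Y|$. Conversely, if $\Omega/Y$ is non-$c$-nilpotent in $H/Y$ and $|\Omega| = |\Omega/Y|$, the equality of cardinalities says precisely that distinct elements of $\Omega$ have distinct images, so the correspondence $x \mapsto xY$ is a bijection between $\Omega$ and $\Omega/Y$.

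The heart of the argument is the claim that for $x, y \in H$ with distinct images in $H/Y$, the subgroup $\langle x, y\rangle$ is $c$-nilpotent if and only if $\langle xY, yY\rangle$ is $c$-nilpotent in $H/Y$. The forward direction is immediate, since the quotient of a $c$-nilpotent group is $c$-nilpotent (nilpotency class cannot increase under homomorphic images). For the reverse direction I would use that $Y$ is central: writing $K = \langle x, y\rangle$, the image $KY/Y \cong K/(K\cap Y)$ is a quotient of $K$ by a \emph{central} subgroup $K \cap Y \leq Z(K)$. A standard fact is that if $G/Z$ is nilpotent of class at most $m$ for a central subgroup $Z$, then $G$ is nilpotent of class at most $m+1$; more sharply, the lower central series satisfies $\gamma_{i}(K) = \gamma_i(KY/Y)$ pulled back, so a central extension of a $c$-nilpotent group by a central subgroup has the terms of its lower central series mapping isomorphically above $\gamma_2$, giving $\gamma_{c+1}(K) = 1$ when $\gamma_{c+1}(K/(K\cap Y)) = 1$. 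I would verify the precise class bound carefully: since $K \cap Y$ is central, $\gamma_2(K)$ injects into $\gamma_2(K/(K\cap Y))$, and inductively $\gamma_{i+1}(K) \cong \gamma_{i+1}(K/(K\cap Y))$ for all $i \geq 1$, so the two groups have exactly the same nilpotency class (not merely a class differing by one). This last observation is what makes the equivalence hold for all finite $c$ simultaneously, as well as for $c = \infty$.

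The main obstacle is the reverse direction of the $c$-nilpotency equivalence, specifically pinning down the exact relationship between the nilpotency class of $K$ and that of $K/(K \cap Y)$. The naive central-extension bound only gives class at most $m+1$, which would not suffice to preserve the threshold $c$. The key refinement needed is that the commutator subgroup $\gamma_2(K) = [K,K]$ meets the central subgroup $K \cap Y$ only where dictated by the lower central series of the quotient: because $K \cap Y$ is central, it contributes nothing to any commutator, so $[K,K] \cap (K\cap Y)$ need not vanish but the \emph{terms} $\gamma_i(K)$ for $i \geq 2$ are unaffected by the quotient since $\gamma_i(K/(K\cap Y)) = \gamma_i(K)(K\cap Y)/(K\cap Y) = \gamma_i(K)/(\gamma_i(K) \cap (K \cap Y))$ and $\gamma_i(K) \cap (K\cap Y)$ is trivial for $i \geq 2$ precisely when we track that a central element cannot appear as a nontrivial higher commutator. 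I would need to state this as a clean lemma on central extensions---that $\gamma_i(K) \cong \gamma_i(K/(K\cap Y))$ for $i \geq 2$ when $K \cap Y$ is central---and cite or prove it, after which the result follows uniformly for every $c \in \longintegers$.
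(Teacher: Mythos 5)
Your cardinality argument, and the direction ``$\Omega/Y$ non-$c$-nilpotent and $|\Omega|=|\Omega/Y|$ implies $\Omega$ non-$c$-nilpotent'', are correct and match the paper, which handles the latter by lifting representatives (Lemma~\ref{lem: 1}): since $\langle x,y\rangle$ surjects onto $\langle xY,yY\rangle$ and nilpotency class cannot increase under quotients, that direction needs only the easy half of your pairwise equivalence.

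The gap is in the other direction. Your key claim --- that if $K\cap Y$ is central in $K$ then $\gamma_i(K)\cong\gamma_i(K/(K\cap Y))$ for all $i\ge 2$, where $\gamma_i$ denotes the lower central series --- is false, and the supporting assertion that ``a central element cannot appear as a nontrivial higher commutator'' is exactly where it breaks: central elements can perfectly well be nontrivial commutators. Take $K=Q_8$ and $K\cap Y=Z(Q_8)=\{\pm 1\}$; then $[i,j]=-1$, so $\gamma_2(K)=\{\pm 1\}$ while $\gamma_2(K/(K\cap Y))=\gamma_2(C_2\times C_2)=1$. Consequently the equivalence you want (``$\langle x,y\rangle$ is $c$-nilpotent if and only if $\langle xY,yY\rangle$ is'') genuinely fails for finite $c$: with $H=Q_8$, $Y=Z(H)$, $x=i$, $y=j$ and $c=1$, the images generate an abelian group while $\langle i,j\rangle=Q_8$ is nonabelian. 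Your earlier instinct was right --- the central-extension argument only gives class at most $c+1$ --- and no refinement can close this, since the statement being proved at that step has a counterexample. For $c=\infty$ your argument is salvageable, because a central extension of a nilpotent group is nilpotent, which is all that is needed there. You should also be aware that the paper's own proof of this direction consists of the one-line assertion that $\langle gY,hY\rangle$ $c$-nilpotent implies $\langle g,h,Y\rangle$ $c$-nilpotent ``since $Y$ is central'', which is open to the same $Q_8$ objection when $c$ is finite; so the difficulty you ran into is real and not an artefact of your approach, but your attempted repair via the $\gamma_i$ isomorphism does not work.
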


\begin{proof}
Suppose that~${\Omega}$ is non-${c}$-nilpotent. Suppose that ${g,h\in \Omega}$ are such that ${gY=hY}$. Then ${h=gy}$ for some
${y\in Y}$, and so ${\langle g, h\rangle = \langle g,y\rangle}$ is abelian; it follows that ${g=h}$, and we conclude
that ${|\Omega|=|\Omega/Y|}$. If ${\langle gY, hY\rangle}$ is ${c}$-nilpotent, then ${\langle g,h, Y\rangle}$ is ${c}$-nilpotent
(since ${Y}$ is central), and once again we see that ${g=h}$. So~${\Omega/Y}$ is non-${c}$-nilpotent, as required.

Now suppose that~${\Omega/Y}$ is non-${c}$-nilpotent and ${|\Omega|=|\Omega/Y|}$. Then~${\Omega}$ is a set of representatives
of~${\Omega/Y}$ in~${H}$ and the result follows from Lemma~\ref{lem: 1}.
\end{proof}

Lemma~\ref{lem: quasisimple} has an important consequence: it tells us that in order to study non-${c}$-nilpotent sets
in a finite simple group of Lie type~${J}$, it is sufficient to study them in~${L}$ where~${L}$ is any group such that
${L/Z(L)\cong J}$. Now Lemma~\ref{lem: versions} implies that there is a simple linear algebraic group~${G}$ and a
Frobenius endomorphism~${F}$ such that ${G^F/Z(G^F) \cong J}$; therefore it is sufficient to study non-${c}$-nilpotent sets
in the group~${G^F}$. For instance, to understand non-nilpotency in ${\PSL_2(q)}$, we can study non-${c}$-nilpotent sets in
${\SL_2(q)=G^F}$, where ${G=\SL_2(K)}$. Similarly, studying non-${c}$-nilpotent sets in~${\GL_n(q)}$ will tell us about
non-${c}$-nilpotent sets in~${\PGL_n(q) = G^F}$, where ${G=\PGL_n(K)}$.

In the reverse direction, Lemma~\ref{lem: quasisimple} implies that a knowledge of non-${c}$-nilpotent sets in finite simple groups
provides information on the non-${c}$-nilpotent sets in all quasi-simple groups.

\section{Some lower bounds}\label{s: lower}

Throughout this section,~${G}$ is a simple linear algebraic group over an algebraically closed field~${K}$ of
characteristic~${p}$, and~${F}$ is a Frobenius endomorphism for~${G}$.

\subsection{Non-nilpotent sets given by Sylow subgroups}

Our first result generalizes the main result
of \cite{azad}.

\begin{lem}\label{lem: regular unipotent set}
Let~${C}$ be a conjugacy class of\/~${G^F}$ such that elements of\/~${C}$ are~regular unipotent. There is a set
${\Omega\subseteq C}$ such that ${|\Omega|=\nu_p(G^F)}$ and~${\Omega}$ is non-nilpotent.
\end{lem}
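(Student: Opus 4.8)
The plan is to combine two ingredients already in place: the structural facts about regular unipotent elements and Borel subgroups from \S\ref{s: lag}, and the Sylow machinery of Lemma~\ref{lem: sylow}. Recall that the Sylow $p$-subgroups of $G^F$ are exactly the conjugates of $U^F$, where $U$ is the unipotent radical of an $F$-stable Borel subgroup $B=UT$, and that there are $\nu_p(G^F)$ of them. The key idea is that a regular unipotent element $u$ lies in a \emph{unique} Borel subgroup of $G$ (Proposition~\ref{p: regular unipotent}), and hence should lie in a unique Sylow $p$-subgroup of $G^F$. If I can establish this ``unique Sylow'' property, then condition~\eqref{e: sylow} of Lemma~\ref{lem: sylow} is verified, and that lemma immediately produces a non-nilpotent set of $p$-elements of size $\nu_p(G^F)$; the extra work is then to arrange that these elements can be taken inside the single class~$C$.

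First I would show that a regular unipotent element $u \in U^F$ lies in no Sylow $p$-subgroup of $G^F$ other than $U^F$. Each Sylow $p$-subgroup has the form $(U')^F$ for the unipotent radical $U'$ of some $F$-stable Borel $B' = U'T'$; since every element of $U'$ is unipotent, $u \in (U')^F$ forces $u \in U' \subseteq B'$. But $u$ is regular unipotent, so by Proposition~\ref{p: regular unipotent} it lies in a unique Borel subgroup of $G$; as $u \in B$ already, we get $B' = B$, whence $U' = U$ and $(U')^F = U^F$. This shows $u$ lies in the single Sylow $p$-subgroup $U^F$. The only point needing care is that $U^F$ actually \emph{contains} a regular unipotent element: this holds because $G$ contains regular unipotent elements, any two are $G$-conjugate, and a standard Lang--Steinberg argument (using that $B$ is $F$-stable) yields an $F$-fixed regular unipotent element, which then lies in $U^F = B^F \cap \{\text{unipotents}\}$.

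Next I would apply Lemma~\ref{lem: sylow}. Having produced a regular unipotent $u \in U^F$ lying in no other Sylow $p$-subgroup, condition~\eqref{e: sylow} holds, so the lemma yields distinguished elements $w_1, \dots, w_{\nu_p(G^F)}$, one in each Sylow $p$-subgroup, forming a non-nilpotent set of $p$-elements of the required size. To finish, I must upgrade these $w_i$ to \emph{regular unipotent} elements lying in the prescribed class $C$. The cleanest route is to observe that the construction in Lemma~\ref{lem: sylow} is conjugation-equivariant: since all Sylow $p$-subgroups are $G^F$-conjugate and $u$ works as a distinguished element for $U^F$, I may take $w_i = u^{g_i}$ where $g_i$ conjugates $U^F$ to the $i$-th Sylow subgroup. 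Each $w_i$ is then conjugate to $u$, hence regular unipotent, and all the $w_i$ lie in the common $G^F$-class of $u$. Finally, all regular unipotent elements of $G^F$ lie in $C$ up to the identification of the class, so after possibly replacing $C$ by the class of $u$ (which is forced, as the hypothesis names $C$ as \emph{a} class of regular unipotents), the set $\Omega = \{w_1, \dots, w_{\nu_p(G^F)}\}$ lies in $C$ and is the desired non-nilpotent set.

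The main obstacle I anticipate is the bookkeeping around conjugacy classes: regular unipotent elements of $G$ form a single $G$-class, but this class may split into several $G^F$-classes, so I must check that the hypothesis ``$C$ is a conjugacy class of $G^F$ whose elements are regular unipotent'' is compatible with choosing all the $w_i$ inside $C$. The equivariant construction $w_i = u^{g_i}$ with $g_i \in G^F$ keeps every $w_i$ in the single $G^F$-class of $u$, so as long as I start the construction from an element $u \in C$ itself (rather than an arbitrary regular unipotent), every distinguished element stays in $C$ by $G^F$-conjugacy; the uniqueness-of-Borel argument only uses regularity, not which $G^F$-class $u$ lies in, so it applies verbatim to any $u \in C$. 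This confines all the delicacy to a single sentence and makes the rest routine.
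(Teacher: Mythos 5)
Your proposal is correct and follows essentially the same route as the paper: show that a regular unipotent element lies in a unique Sylow $p$-subgroup of $G^F$ (via uniqueness of the Borel subgroup containing it, Proposition~\ref{p: regular unipotent}), then invoke Lemma~\ref{lem: sylow}, picking the distinguished elements from the single class $C$ by $G^F$-conjugacy. The paper's proof is just a terser version of this; your extra care about Lang--Steinberg and the splitting of the $G$-class into $G^F$-classes is sound but not needed beyond the observation that $C$ is nonempty, $G^F$-invariant, and contained in the union of the (conjugate) Sylow $p$-subgroups.
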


\begin{proof}
We can construct such an~${\Omega}$ by including one element of~${C}$ from every Sylow ${p}$-subgroup of~${G^F}$. Since regular
unipotent elements lie in a unique Borel subgroup of~${G}$, they lie in a unique Sylow ${p}$-subgroup of~${G^F}$.
Now the result follows from Lemma~\ref{lem: sylow}.
\end{proof}

\begin{lem}\label{lem: regular semisimple set}
Suppose that ${C}$ is a conjugacy class of regular semisimple elements in~${G^F}$. Suppose also that the elements of~${C}$
have order~${t^a}$, where~${t}$ is a prime that does not divide~${|W|}$, and~${a}$ is a positive integer.
Then ${C}$ has a non-nilpotent subset ${\Omega}$ of size ${\nu_t(G^F)}$.
\end{lem}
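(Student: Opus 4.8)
The plan is to mirror the proof of Lemma~\ref{lem: regular unipotent set}, replacing the prime $p$ by $t$ and Borel subgroups by maximal tori, and to produce $\Omega$ by selecting one element of $C$ from each Sylow $t$-subgroup of $G^F$. The structural input that makes this work is Proposition~\ref{p: springersteinberg}: since $t$ is coprime to $|W|$, every Sylow $t$-subgroup $X$ of $G^F$ is contained in $T^F$ for some $F$-stable maximal torus $T$ of $G$. As $T^F$ is abelian, its $t$-elements form its unique Sylow $t$-subgroup, and $X$, being a maximal $t$-subgroup of $G^F$ contained in $T^F$, must coincide with it. Thus each Sylow $t$-subgroup of $G^F$ is exactly the $t$-part of some $T^F$.

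The key step is to show that every element of $C$ lies in a \emph{unique} Sylow $t$-subgroup of $G^F$. Let $g\in C$. As $g$ has order $t^a$, it is a $t$-element and so lies in some Sylow $t$-subgroup $X\subseteq T^F\subseteq T$. Hence $T$ is a maximal torus of $G$ containing $g$; since $g$ is regular semisimple, Proposition~\ref{p: regular semisimple} guarantees that $T$ is the only maximal torus of $G$ containing $g$. If $g$ also lies in a Sylow $t$-subgroup $X'\subseteq T'^F$, then $g\in T'$ forces $T'=T$, whence $X,X'\subseteq T^F$; by the previous paragraph both equal the $t$-part of $T^F$, so $X=X'$. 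This uniqueness is the crux of the argument, and the step I expect to require the most care.

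Next I would check that every Sylow $t$-subgroup of $G^F$ contains an element of $C$. Fixing $g\in C$ together with its unique Sylow $t$-subgroup $X$, any Sylow $t$-subgroup has the form $X^h$ for some $h\in G^F$ by Sylow's theorem, and then $g^h\in X^h$ with $g^h\in C$. Hence I may choose, for each Sylow $t$-subgroup $P_i$ of $G^F$, an element $w_i\in C\cap P_i$, and set $\Omega=\{w_1,\dots,w_{\nu_t(G^F)}\}$. The elements $w_i$ are pairwise distinct, because each lies in a unique Sylow $t$-subgroup, so $|\Omega|=\nu_t(G^F)$ and $\Omega\subseteq C$, as required.

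Finally, non-nilpotency follows exactly as in the proof of Lemma~\ref{lem: sylow}: for $i\neq j$ no Sylow $t$-subgroup of $G^F$ contains both $w_i$ and $w_j$, so the Sylow $t$-subgroups of $\langle w_i,w_j\rangle$ are not normal and the subgroup is not nilpotent. Indeed, once the uniqueness property above is established, the condition~\eqref{e: sylow} of Lemma~\ref{lem: sylow} holds for the prime $t$, and one may simply invoke that lemma to finish; the only additional content here is that the distinguished elements can be taken inside the single conjugacy class $C$.
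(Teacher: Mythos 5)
Your proof is correct and follows essentially the same route as the paper's: both use Proposition~\ref{p: springersteinberg} to place Sylow $t$-subgroups inside maximal tori, use regularity via Proposition~\ref{p: regular semisimple} to deduce that each element of $C$ lies in a unique Sylow $t$-subgroup, and then conclude via Lemma~\ref{lem: sylow}. The extra details you supply (identifying each Sylow $t$-subgroup with the $t$-part of some $T^F$, and checking that every Sylow $t$-subgroup meets $C$) are correct elaborations of steps the paper leaves implicit.
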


\begin{proof}
Since~${t}$ does not divide~${|W|}$, any Sylow ${t}$-subgroup of~${G^F}$ lies inside a maximal torus of~${G^F}$. Since a regular
semisimple element lies in a unique maximal torus of~${G}$ (and hence a unique maximal torus of~${G^F}$), it follows that
each regular semisimple element of order~${t^a}$ lies in a unique Sylow ${t}$-subgroup of~${G^F}$. Now the result follows from
Lemma~\ref{lem: sylow}.
\end{proof}

\begin{thm}\label{thm: putting sets together}
Let ${T_1, \dots, T_k}$ be maximal tori in~${G^F}$, and suppose that they are pairwise non-conjugate. Suppose
that for ${i=1,\dots, k}$, the torus~${T_i}$ contains a~regular element~${g_i}$ of order~${t_i}$, where~${t_i}$ is a
prime that does not divide~${|W|}$. Then
\[ \omegan{G^F}\geq \nu_p(G^F)+\nu_{t_1}(G^F)+\cdots +\nu_{t_k}(G^F). \]
\end{thm}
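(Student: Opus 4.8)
The plan is to combine the three Sylow-based lemmas just established. First I would note that each $g_i$, lying in a maximal torus of $G^F$, is semisimple; as its order is the prime $t_i$, this already forces $t_i\neq p$, and $g_i$ is in fact regular semisimple of order $t_i$. Applying Lemma~\ref{lem: regular semisimple set} to the conjugacy class $C_i$ of $g_i$ then produces a non-nilpotent set $\Omega_i\subseteq C_i$ of size $\nu_{t_i}(G^F)$ consisting of $t_i$-elements, while Lemma~\ref{lem: regular unipotent set}, applied to a regular unipotent class, produces a non-nilpotent set $\Omega_0$ of size $\nu_p(G^F)$ consisting of $p$-elements. The goal is to feed $\Omega_0,\Omega_1,\dots,\Omega_k$ into Lemma~\ref{lem: separate sets}, whose hypotheses require the primes involved to be pairwise distinct and that elements drawn from different $\Omega_i$ fail to commute.

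The crucial input for both hypotheses is the fact (Proposition~\ref{p: regular semisimple}) that a regular semisimple element lies in a unique maximal torus of $G$. I would use this first to check that $t_1,\dots,t_k$ are distinct. Since $t_i\nmid|W|$, Proposition~\ref{p: springersteinberg} places every Sylow $t_i$-subgroup inside a maximal torus, and all such Sylow subgroups are $G^F$-conjugate. If $t_i=t_j$ for some $i\neq j$, then fixing a single Sylow $t_i$-subgroup $X$ contained in a torus $T$ and conjugating both $g_i$ and $g_j$ into $X$ would place both inside $T$; uniqueness of the containing tori would then force the tori of $g_i$ and $g_j$ to be $G^F$-conjugate, contradicting the non-conjugacy of $T_i$ and $T_j$. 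Hence $p,t_1,\dots,t_k$ are pairwise distinct.

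The main obstacle is verifying the non-commuting condition, and here the same torus-uniqueness principle does the work. For a regular unipotent $w_0\in\Omega_0$ and a regular semisimple $w_i\in\Omega_i$, commutativity would place the semisimple element $w_i$ in $C_G(w_0)$, all of whose semisimple elements lie in $Z(G)$; but $w_i$ is regular, hence non-central (as $G$ is simple), a contradiction. For two regular semisimple elements $w_i\in\Omega_i$ and $w_j\in\Omega_j$ with $i\neq j$, if they commuted then $s=w_iw_j$ would be semisimple with $w_i,w_j\in\langle s\rangle$; embedding $s$ in a maximal torus and invoking uniqueness once more would force the unique tori of $w_i$ and $w_j$ to coincide, again contradicting non-conjugacy of $T_i$ and $T_j$.

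With the primes pairwise distinct and every cross-commutator nontrivial, Lemma~\ref{lem: separate sets} applies with $c=\infty$ to the sets $\Omega_0,\Omega_1,\dots,\Omega_k$ and yields $\omegan{G^F}\geq|\Omega_0|+|\Omega_1|+\cdots+|\Omega_k|=\nu_p(G^F)+\nu_{t_1}(G^F)+\cdots+\nu_{t_k}(G^F)$, as required. I expect the torus-uniqueness arguments of the middle two paragraphs, rather than the bookkeeping with the lemmas, to carry the real content of the proof.
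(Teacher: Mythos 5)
Your proof is correct and follows essentially the same route as the paper's: both construct $\Omega_0$ and the $\Omega_i$ via Lemmas~\ref{lem: regular unipotent set} and~\ref{lem: regular semisimple set}, verify that the primes $p,t_1,\dots,t_k$ are pairwise distinct and that elements from different sets fail to commute by appealing to the uniqueness of the maximal torus containing a regular (semisimple) element, and then conclude with Lemma~\ref{lem: separate sets}. The only cosmetic difference is in the unipotent-versus-semisimple case: the paper excludes $h\in Z(G)$ by invoking Proposition~\ref{prop: t divides W} to see that $t_i\nmid |Z(G)|$, whereas you observe that a regular element of a simple algebraic group cannot be central since its centralizer has dimension $\mathrm{rank}(G)<\dim G$; both arguments are valid.
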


\begin{proof}
Let~${\Omega_0}$ be the non-nilpotent set of regular unipotent elements given in Lemma~\ref{lem: regular
unipotent set}. Let~${\Omega_i}$ be the non-nilpotent set of regular semisimple elements in conjugates of~${T_i}$
given in Lemma~\ref{lem: regular semisimple set}. Let ${\Omega=\Omega_0\cup \Omega_1 \cup \cdots
\cup\Omega_k}$.

Let ${i\in\{1,\dots, n\}}$. Since~${t_i}$ does not divide~${|W|}$ we see that~${T_i}$ contains a Sylow ${t_i}$-subgroup
of~${G}$. Since~${g_i}$ is regular of order~${t_i}$, we know that it lies inside a unique maximal torus of~${G^F}$, and
hence ${t_i\neq t_j}$ for ${i\neq j}$. Furthermore, since~${g_i}$ is regular, we see that ${[g_i, h]\neq 1}$ for any
${h\not\in T_i}$. In particular ${[g_i, h]\neq 1}$ for all ${h\in \Omega\setminus\Omega_i}$.

Similarly, since an element ${g\in\Omega_0}$ is regular unipotent, if~${h}$ is a semisimple element of~${G}$ and
${[g,h]=1}$, then ${h\in Z(G)}$. Since ${t_i\nmid |W|}$, we see from Proposition \ref{prop: t divides W} that ${t_i}$ does not divide ${|Z(G)|}$, and hence that
${[g,h]\neq 1}$ for all ${g\in \Omega_0}$ and ${h\in\Omega\setminus\Omega_0}$. Now the result follows by Lemma
\ref{lem: separate sets}.
\end{proof}

\subsection{Rank dependent bounds for ${\omega_\infty(G^F)}$}

In this section we complete the proof of Theorem \ref{thm: rank-dependent lower bound}. To prove the lower bound when $G^F$ is a classical group, we will make use of the {\it distinguished tori} that we described in detail in \S\ref{s: classical}. If ${G^F}$ is an exceptional group of Lie type, then we take a different approach here, along lines indicated by \cite{bps}:

\begin{defn}
A non-trivial torus ${T}$ of the group ${G^F}$ is said to be \emph{sharp} if ${C_{G^F}(t) = T}$ for every non-identity ${t\in T}$.
\end{defn}

The next result is \cite[Theorem 3.1]{bps}.
\begin{prop}\label{p: sharp torus}
 All of the exceptional simple groups of Lie type except for ${E_7(q)}$ contain sharp maximal tori.
\end{prop}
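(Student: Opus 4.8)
The plan is to reinterpret sharpness as a fixed-point-free action condition, and then to exhibit for each relevant type an explicit cyclic maximal torus arising from a Springer-regular element of the Weyl group.

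First I would record the reformulation. For a maximal torus $T = T^F$ of $G^F$ and a nonidentity $t \in T$, the condition $C_{G^F}(t) = T$ forces $t$ to be regular semisimple, so that by Proposition~\ref{p: regular semisimple} the connected centraliser $C_G(t)^0$ is exactly the ambient maximal torus; then $C_{G^F}(t)/T$ is identified with the stabiliser of $t$ in the relative Weyl group $W(T) := N_{G^F}(T)/T$. Thus $T$ is sharp precisely when $W(T)$ acts without nontrivial fixed points on $T\setminus\{1\}$. I would also note the immediate necessary condition $Z(G^F)=1$: since $Z(G)\subseteq T$ for every maximal torus, any nontrivial central element has centraliser $G^F\neq T$. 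This already gives a first indication of why $E_7$, whose centre has order $\gcd(2,q-1)$, is exceptional for odd~$q$.

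Second, for the non-$E_7$ types I would construct the torus via Springer's theory of regular elements. Choosing $d$ to be a suitable regular number for $W$ (the Coxeter number $h$ being the natural first choice), there is a maximal torus $T$, cyclic of order the cyclotomic value $\Phi_d(q)$, whose relative Weyl group $W(T)$ is cyclic of order $d$, generated by the image $\bar w$ of a regular element $w$ acting on the cocharacter lattice with eigenvalues $\zeta_d^{m}$. Since $T$ is cyclic, $\bar w^{\,j}$ acts as $x\mapsto x^{a_j}$ for some integer $a_j$, and it fixes a nontrivial element if and only if $\gcd(a_j-1,\Phi_d(q))>1$. I would then argue fixed-point-freeness number-theoretically: if every prime $\ell$ dividing $\Phi_d(q)$ is a primitive prime divisor of $q^d-1$, then $a_j\not\equiv 1 \pmod{\ell}$ for $1\le j\le d-1$, so $\gcd(a_j-1,\Phi_d(q))=1$ and the action is free. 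Zsigmondy's theorem, in the refined form of Lemma~\ref{lem: fermat}, supplies such a primitive prime divisor, while Proposition~\ref{p: springersteinberg} guarantees that the relevant Sylow subgroup genuinely lies inside the torus; the one possible non-primitive prime factor of $\Phi_d(q)$, which must divide $d$, has to be tracked separately and shown either to be absent or still to satisfy $a_j\not\equiv 1$.

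Third, and this is where the real work lies, I would carry out the per-type eigenvalue analysis and isolate the $E_7$ obstruction. The construction above succeeds exactly when $w$ can be chosen with all eigenvalues primitive $d$-th roots of unity, that is, $\gcd(m,d)=1$ for each relevant exponent $m$. For $E_7$ this fails: because $-1\in W(E_7)$, the Coxeter element already has $-1$ among its eigenvalues (exponent $9$, Coxeter number $18$), so $\bar w^{\,d/2}$ fixes a nontrivial element of $T$ and the Coxeter torus is not sharp; combined with the central involution for odd~$q$, this is the source of the exclusion. The main obstacle is therefore twofold: verifying, type by type, that a good regular number exists for $G_2$, $F_4$, $E_6$, $E_8$ and the twisted types ${}^2B_2$, ${}^2G_2$, ${}^2F_4$, ${}^3D_4$, ${}^2E_6$ (checking the exponent condition and the relative Weyl group structure in each case), and proving the non-existence of \emph{any} sharp torus for $E_7$ across all of its maximal tori, not merely the Coxeter torus. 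Finally I would dispose of the finitely many small-$q$ cases in which Zsigmondy's theorem fails by direct inspection.
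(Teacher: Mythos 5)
The paper offers no proof of this proposition at all: it is quoted directly as \cite[Theorem 3.1]{bps}, so there is no internal argument to compare yours against. Judged on its own merits, your strategy --- reformulating sharpness as regularity of every nontrivial element plus a fixed-point-free action of the relative Weyl group, and then exhibiting a cyclic ``cyclotomic'' torus of order $\Phi_d(q)$ for a Springer-regular number $d$, with freeness forced by primitive prime divisors of $q^d-1$ --- is the standard and essentially correct route to such a result. But as written it is a programme rather than a proof, and you say so yourself: the entire per-type verification for $G_2$, $F_4$, $E_6$, $E_8$ and the twisted types is deferred, and that verification is not routine. Already for $E_6$ your ``natural first choice'' $d=h$ fails: the exponents of $E_6$ are $1,4,5,7,8,11$ with $h=12$, so the Coxeter torus has order $\Phi_{12}(q)\Phi_3(q)$ rather than $\Phi_{12}(q)$, and one must instead take $d=9$ (and $d=18$ for ${}^2E_6$). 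Each type needs its own choice of $d$ and its own check that the non-primitive prime factors of $\Phi_d(q)$ cause no fixed points, plus the small-$q$ Zsigmondy exceptions.

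The second genuine gap is the $E_7$ exclusion. Your explanation via $Z(G^F)\neq 1$ does not apply: the proposition concerns the \emph{simple} groups, whose centres are trivial, and the exclusion of $E_7(q)$ holds for even $q$ as well, where even the simply connected form is centreless. What actually obstructs $E_7$ is that $-1$ is central in $W(E_7)$ and the rank is odd: for any $w$, the eigenvalue multiset of $wF$ on the cocharacter lattice is closed under complex conjugation and has odd cardinality, so some eigenvalue is $\pm 1$ and hence $q-1$ or $q+1$ divides $|T^F|$ for \emph{every} maximal torus; the corresponding elements have centralizers strictly larger than $T$. Your observation about the exponent $9$ of the Coxeter element is a special case of this, but it rules out only the Coxeter torus, whereas the proposition requires showing that \emph{no} maximal torus of $E_7(q)$ is sharp --- a claim you assert but do not establish. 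Until both the type-by-type constructions and the full $E_7$ non-existence argument are carried out, the proposal does not constitute a proof.
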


We shall require two preliminary results.

\begin{lem}\label{l: torus bounds}
For every $r$, there exist positive constants $c_r, d_r$ and $e_r$ such that if $G$ is any simple algebraic group of rank $r$, $F$ is any Frobenius endomorphism of $G$, and $T$ is any $F$-stable maximal torus of $G$, then
\[
c_r q^r\leq {|T^F|\leq d_r q^r}\textrm{ and }{|G^F|\leq e_r q^{|\Phi|+r}}.
\]
\end{lem}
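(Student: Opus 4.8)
The plan is to establish all three inequalities by relating $|T^F|$ and $|G^F|$ to the level $q$ of the Frobenius endomorphism using the known polynomial structure of these orders. The key fact is that for a simple algebraic group $G$ of rank $r$ with root system $\Phi$, the order of an $F$-stable maximal torus $T^F$ is given by evaluating a monic polynomial of degree $r$ at $q$ (essentially the characteristic polynomial of the relevant element of the Weyl group acting on the cocharacter lattice), while $|G^F|$ is given by a polynomial of degree $|\Phi|+r$ in $q$. Since the rank $r$ constrains the possibilities, I would argue that the finitely many combinatorial shapes of these polynomials that can arise for a fixed $r$ yield uniform constants.

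First I would recall that $|T^F| = \prod_i (q^{d_i} - \eps_i)$ up to the precise root-system data, where the $d_i$ are positive integers summing to $r$ and each $\eps_i$ is a root of unity of bounded order; more precisely $|T^F|$ is the absolute value of the characteristic polynomial of $F$ acting on the cocharacter lattice, evaluated suitably, and this is a monic integer polynomial in $q$ of degree exactly $r$ with bounded coefficients. The upper bound $|T^F| \le d_r q^r$ then follows because a degree-$r$ polynomial with coefficients bounded in terms of $r$ is at most $d_r q^r$ for a suitable $d_r$ (using $q \ge 2$). For the lower bound, since the leading term is $q^r$ and the lower-order terms are controlled, one has $|T^F| \ge q^r - (\text{lower terms}) \ge c_r q^r$ for a suitable constant $c_r$; here some care is needed to ensure the constant is uniform across the bounded family of polynomials, but since only finitely many root systems (up to the relevant data) occur for fixed $r$, and each torus type gives a fixed polynomial, this is a finite minimization yielding a positive $c_r$. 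The bound $|G^F| \le e_r q^{|\Phi|+r}$ is handled identically, using the standard order formula $|G^F| = q^{|\Phi|}\prod_i(q^{d_i}-\eps_i)$ up to the center, which is a polynomial of degree $|\Phi|+r$ with leading coefficient $1$ and bounded lower coefficients.

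The main obstacle I anticipate is making the constants genuinely uniform and handling the twisted groups (Suzuki and Ree) together with the convention issues about $q$ flagged earlier in the paper. For a fixed rank $r$ there are only finitely many Dynkin diagrams, and for each there are only finitely many conjugacy classes of maximal tori (indexed by $F$-conjugacy classes in the Weyl group), so the set of polynomials $\{|T^F|/q^r \text{ as functions of } q\}$ is finite; taking $c_r$ to be the minimum over this finite set of $\inf_{q \ge 2}(|T^F|/q^r)$ and $d_r$ the maximum of $\sup_{q\ge 2}(\cdot)$ gives the torus bounds, and an analogous finite optimization handles $e_r$. The twisted types require checking that the level convention (particularly the doubled convention for $^2B_2$ and $^2G_2$ noted in the paper) still yields polynomials of the stated degrees; I would verify directly from the order formulae in the Suzuki/Ree cases that $|T^F|$ and $|G^F|$ remain bounded appropriately by powers of $q$, absorbing any half-integer exponents into the constants. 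Since rank $r$ is fixed throughout, no genuine difficulty arises beyond this bookkeeping.
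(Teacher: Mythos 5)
Your proposal is correct and takes essentially the same route as the paper: both arguments rest on the fact that $|T^F|$ is the characteristic polynomial of a finite-order element (determined by the Frobenius and an $F$-conjugacy class in the Weyl group) acting on the cocharacter lattice, evaluated at $q$, while $|G^F|\leq e_r q^{|\Phi|+r}$ is disposed of by inspecting the order formulae. The only real difference is in how the constants are extracted: the paper notes directly that the eigenvalues $m_1,\dots,m_r$ are roots of unity, so $(q-1)^r\leq |T^F|=\prod_i(q-m_i)\leq (q+1)^r$, yielding the explicit constants $c_r=(1/2)^r$ and $d_r=(3/2)^r$, whereas you reach the same conclusion by a finite minimization over the root systems and torus classes of fixed rank.
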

\begin{proof}
The upper bound on $|G^F|$ can be proved easily by checking the orders of the finite groups of Lie type. Now consider the problem of bounding $|T^F|$.

Recall first that $\mathcal{T}$ corresponds to an $F$-conjugacy class of the Weyl group $W$. Indeed if $w$ is an element of the $F$-conjugacy class, then $T=T_0^g$ where $T_0$ is a maximal split torus, $g^{-1}F(g)\in N_G(T_0)$ and $w$ is the corresponding element in the Weyl group $W\cong N_G(T_0)/T_0$. Let ${Y_0=\Hom(G_m,T_0)}$, the cocharacter group of  algebraic homo\-morphisms ${G_m(K) \to T_0}$. Then ${F}$ acts on ${Y_0}$ in a natural way: for ${\alpha \in Y_0}$ we have ${\alpha^F = F\circ\alpha}$. Since ${F}$ is a Frobenius map, there is a map
${F_0:Y_0 \to Y_0}$ such that ${F=[q]F_0}$, where ${[q]}$ is the map induced by the field automorphism~${x\mapsto x^q}$.

Now \cite[Proposition 3.3.5]{Carter} implies that
${|T^F|=\det_{Y_0 \otimes \R}([q]-F_0^{-1} w)}.$
Since ${F_0^{-1}w}$ has finite order, its eigenvalues ${m_1,\dots, m_r}$ on ${Y_0\otimes \C}$ are roots of unity. Thus
${[q]-F_0^{-1}w}$ has eigenvalues ${q-m_1,\dots, q-m_r}$, and hence
\[
|T^F| = (q-m_1)(q-m_2)\cdots (q-m_r).
\]
Now it is easy to see that
\[
c_rq^r \leq (q-1)^r \leq |T^F|\leq (q+1)^r\leq d_r q^r
\]
 where $c_r=(\frac{1}{2})^r$ and $d_r=(\frac{3}{2})^r$.
\end{proof}

\begin{prop}\label{p: maxtori}
Let ${r>1}$. There is a constant ${C_r}$ such that for any simple algebraic group ${G}$ of rank ${r}$, for any Frobenius endomorphism ${F}$ of ${G}$, and for any ${G^F}$\hbox{-}con\-jugacy class ${\mathcal{T}}$ of maximal tori of\/ ${G^F}$, we have ${N(G^F)\le C_r|\mathcal{T}|}$, where ${N(G^F)}$ is the number of ${F}$-stable maximal tori in ${G}$.
\end{prop}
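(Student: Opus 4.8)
The plan is to reduce everything to the standard parametrization of maximal tori by the Weyl group. Write $q$ for the level of $F$ and $\Phi$ for the root system of $G$. By Proposition~\ref{p: steinberg} we have $N(G^F)=q^{|\Phi|}$, so it suffices to show that any $G^F$-conjugacy class of maximal tori has size at least a constant (depending only on $r$) times $q^{|\Phi|}$. First I would work with the $F$-stable maximal tori of $G$, which fall into $G^F$-conjugacy classes indexed by the $F$-conjugacy classes of $W$. For the class $\mathcal{T}_w$ indexed by $w\in W$, the torus $T_w$ in it satisfies $N_{G^F}(T_w)/T_w^F\cong C_{W,F}(w)$, the $F$-centralizer of $w$ (see \cite[Proposition 3.3.6]{Carter}), so
\[
|\mathcal{T}_w|=\frac{|G^F|}{|N_{G^F}(T_w)|}=\frac{|G^F|}{|T_w^F|\,|C_{W,F}(w)|}.
\]

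Next I would estimate the three factors. The numerator is bounded below by $|G^F|\ge f_r q^{|\Phi|+r}$ for a constant $f_r>0$ depending only on $r$; this is the lower-bound counterpart of Lemma~\ref{l: torus bounds}, obtained by the identical eigenvalue computation, since each factor $q^{d_i}-\eps_i$ of the order polynomial has modulus at least $q^{d_i}-1\ge\tfrac12 q^{d_i}$. In the denominator, Lemma~\ref{l: torus bounds} gives $|T_w^F|\le d_r q^r$, while $|C_{W,F}(w)|\le|W|$, and $|W|$ is bounded above by a constant $W_r$ depending only on $r$ (there being only finitely many root systems of rank $r$). Combining these, every $G^F$-class of $F$-stable maximal tori of $G$ has size at least $(f_r/d_rW_r)\,q^{|\Phi|}=(f_r/d_rW_r)\,N(G^F)$; note that the powers of $q$ cancel exactly, which is the crux of the estimate.

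It remains to pass from $F$-stable maximal tori of $G$ to maximal tori of $G^F$, which is the delicate point and the place where the remark in the introduction is discharged. The map $T\mapsto T^F$ is $G^F$-equivariant and surjective onto the maximal tori of $G^F$; for $q$ larger than a threshold $Q_r$ depending only on $r$ it is a bijection (\cite[Proposition 3.6.6]{Carter}), so the conjugacy classes correspond and the bound above applies verbatim to a class $\mathcal{T}$ of maximal tori of $G^F$. For the finitely many pairs $(G,F)$ of rank $r$ with $q\le Q_r$, the ratio $N(G^F)/|\mathcal{T}|$ takes only finitely many values over all such groups and all their classes $\mathcal{T}$; taking $C_r$ to be the maximum of these finitely many ratios together with $d_rW_r/f_r$ completes the proof.

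The main obstacle I anticipate is precisely this last step: for small $q$ a maximal torus $S=T^F$ of $G^F$ need not determine $T$ (equivalently, $S$ need not contain an element that is regular in $G$), so the two counts can genuinely differ and the clean class-size formula is available only for the algebraic tori. Indeed, when $S$ fails to meet the regular locus one has $C_G(S)^0\supsetneq T$, and then $N_{G^F}(S)$ is no longer controlled by $|T^F|\,|W|$, so no uniform bound $|N_{G^F}(S)|\le C_r' q^r$ can hold across all $q$. Isolating the large-$q$ regime, where $S$ always contains a regular semisimple element and the correspondence is a bijection, and absorbing the finitely many small-$q$ groups into the constant, seems the most economical route; establishing the lower bound $|G^F|\ge f_r q^{|\Phi|+r}$ is then routine given the method already used for Lemma~\ref{l: torus bounds}.
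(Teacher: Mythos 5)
Your proposal is correct and follows essentially the same route as the paper: both reduce to the orbit--stabilizer count $|\mathcal{T}|=|G^F|/|N_{G^F}(T^F)|$, bound $|N_{G^F}(T^F)|$ by (a function of $r$) times $|T^F|\le d_rq^r$ via the Weyl-group description of the normalizer, use a lower bound $|G^F|\ge f_rq^{|\Phi|+r}$ so the powers of $q$ cancel against $N(G^F)=q^{|\Phi|}$, and dispose of small $q$ by a finiteness/triviality argument resting on \cite[Proposition 3.6.6]{Carter}. Your explicit insistence on the lower-bound counterpart of Lemma~\ref{l: torus bounds} for $|G^F|$ is a point the paper's displayed inequality glosses over, but the substance of the argument is identical.
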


\begin{proof}
For notational convenience in what follows, we shall write ${f(r)}$ as a token for an unspecified function of ${r}$, with the understanding that
separate instances of the token may refer to distinct functions.

Recall that, by Proposition~\ref{p: steinberg}, ${N(G^F) = q^{|\Phi|}}$ where ${\Phi}$ is the root system for ${G}$. If ${q}$ is bounded above by a function of ${r}$, then ${N(G^F)<f(r)}$ and the proposition holds trivially. Now \cite[Proposition 3.6.6]{Carter} implies that, for $q$ sufficiently large, all maximal tori of ${G^F}$ are non-degenerate, i.e. any maximal torus in $G^F$ lies in precisely one maximal torus of $G$. This implies, in particular, that ${N(G^F)}$ is equal to the number of maximal tori in ${G^F}$.

Since all maximal tori in ${G^F}$ are non-degenerate, \cite[Corollary 3.6.5]{Carter} implies that ${|N_{G^F}(T^F):T^F|<f(r)}$
for any maximal torus $T^F$ in $\mathcal{T}$. Thus, appealing to Lemma~\ref{l: torus bounds} (and using the constants $d_r$ and $e_r$ therein), we have
\[
\begin{aligned}
 |\mathcal{T}|=\frac{|G^F|}{|N_{G^F}(T^F)|}&\geq \frac{e_rq^{r+|\Phi|}}{|N_{G^F}(T^F)|} =N(G^F)\cdot \frac{e_rq^r}{|N_{G^F}(T^F)|}\\
 &\geq N(G^F)\cdot \frac{e_rq^r}{f(r)|T^F|} \geq N(G^F)\cdot \frac{e_r}{f(r)d_r}.
 \end{aligned}
\]
The result follows.
\end{proof}

\begin{proof}[Proof of Theorem~\ref{thm: rank-dependent lower bound}{\rm:} lower bound]
We begin by establishing the lower bound for ${\omega_\infty(G^F)}$. By Proposition \ref{p: maxtori} it suffices to show that \[ {\omega_\infty(G^F)\ge |\mathcal{T}|}, \] where ${\mathcal{T}}$ is some class of maximal tori.

Suppose that ${G^F}$ is an exceptional group, other than ${E_7(q)}$. Then ${G^F}$ has a~class~${\mathcal{T}}$ of sharp tori. Let ${T_1,T_2}$
be distinct tori in ${\mathcal{T}}$. Then clearly ${T_1\cap T_2 = \{1\}}$. But now for any non-identity ${x_1\in T_1}$ and ${x_2\in T_2}$ we see that
${\langle x_1,x_2\rangle}$ is centreless, and hence not nilpotent. It follows immediately that ${\omega_\infty(G^F)\ge|\mathcal{T}|}$.

Now suppose that ${G^F}$ is of classical type. Let ${T}$ be the distinguished torus identified in \S\ref{s: classical}, and let $d$ be the dimension of the group $G^F$. We may assume without loss of generality, that $q>2$; thus Lemma~\ref{l: classical} implies that, provided $G^F$ is not of type $A_1$, then $T$ contains an element ${x}$ of order ${t}$, where ${t}$ is a primitive prime divisor of ${q^d-1}$ or of~${q^{d-1}-1}$ or of~${q^{d-2}-1}$. If $G^F$ is of type $A_1$, then Theorems~\ref{thm: pgl2} and \ref{thm: psl2} imply the result, so we exclude this case and assume that $T$ contains the given element $x$.

By Lemma \ref{lem: fermat} we may suppose that ${t>(d-2)\log_p q}$, and, referring to Table~\ref{t: weyl}, we see that for all but finitely many values of ${q}$, ${t}$ is coprime with the order of the Weyl group of ${G}$. We may exclude these finite values of $q$ without loss of generality, hence, by Proposition \ref{p: springersteinberg}, we conclude that every Sylow ${t}$-subgroup of ${G^F}$ lies inside a conjugate of ${T}$.

Let ${\mathcal{T}}$ be the conjugacy class of ${T}$ and assume, first, that ${t}$ is a primitive prime divisor of ${q^d-1}$ or of~${q^{d-1}-1}$. Then $\langle x\rangle$ acts irreducibly on a space of dimension at least $d-1$ and we conclude immediately that $x$ is regular. It follows that the order of the intersection of any two distinct conjugates of ${T}$ is not divisible by ${t}$, and thus distinct Sylow ${t}$-subgroups of~${G^F}$ have trivial intersection. Let ${\mathcal{C}}$ be a set of elements of~${G^F}$ containing one element of order ${t}$ from each torus in~${\mathcal{T}}$. Then clearly any two elements of ${\mathcal{C}}$ generate a non-nilpotent group. Therefore ${\omega_\infty(G^F)\ge |\mathcal{T}|}$, as required.

Next assume that $t$ is a primitive prime divisor of ${q^{d-2}-1}$. In this case we have $G^F=D_n(q)$, with $d=2n\geq 8$. In this case $x$ is not regular, however, using the fact that ${\rm O}_2^-(q)$ is dihedral one can confirm, once again, that the order of the intersection of any two distinct conjugates of ${T}$ is not divisible by ${t}$. Now the argument proceeds as per the previous paragraph.

It remains only to deal with the groups ${E_7(q)}$. Recall that there are two versions of such a group, one simple and one not, but we use the same label for each since the same argument works. We use two facts, both established in the proof of \cite[Theorem 3.4]{bps}: First, that ${E_7(q)}$ contains a class ${\mathcal{T}}$ of maximal tori of order ${\frac{1}{\delta}(q^7-1)}$ where ${\delta\in\{1,2\}}$; second, that for ${T\in\mathcal{T}}$, for ${t}$ a primitive prime divisor of ${q^7-1}$, and for ${g\in T}$ an element of order ${t}$, the centralizer of ${g}$ in ${E_7(q)}$ is ${T}$. Now ${t}$ does not divide ${|E_7(q):T|}$, and it follows that the Sylow ${t}$-subgroups of ${E_7(q)}$ intersect trivially, and that each is contained in a unique member of ${\mathcal{T}}$. It is clear from these facts that ${\omega_\infty(E_7(q))\ge |\mathcal{T}|}$, as~required.
\end{proof}
It is perhaps worth mentioning that for groups ${G}$ of a fixed Lie type, it may well be possible to establish a rank-independent lower bound for
${\omega_\infty(G)}$ by a more careful count of maximal tori. We have not attempted to carry out this project, however.

\begin{proof}[Proof of Theorem~\ref{thm: rank-dependent lower bound}{\rm:} upper bound]
We now turn to the task of proving an upper bound for ${\omega_\infty(G^F)}$. We revive the convention, used in the proof of Proposition~\ref{p: maxtori}, of writing ${f(r)}$ as a token for an unspecified function of ${r}$.

Observe first, that all semisimple elements of ${G^F}$ are contained in a maximal torus of ${G^F}$; these are abelian, and by definition there are at most ${N(G^F)}$ of them. On the other hand, a result of Steinberg \cite[Theorem 6.6.1]{Carter} states that there are ${N(G^F)}$ unipotent elements in ${G^F}$. So taking the set of maximal tori of ${G^F}$ together with the set of cyclic groups generated by a unipotent element,
we have at most ${2N(G^F)}$ abelian subgroups of ${G^F}$, covering all unipotent and semisimple elements of ${G^F}$.

We therefore need only construct a family of nilpotent subgroups of ${G^F}$, of size at most ${f(r)N(G^F)}$, whose members together contain all the elements of ${G^F}$ that are neither unipotent nor semisimple. Let ${g}$ be such an element. Write ${g=su}$ for the Jordan decomposition of ${g}$; so ${s\in G^F}$ is semisimple, ${u\in G^F}$ is unipotent, and ${[s,u]=1}$. Note that ${u}$ lies in the centralizer of ${s}$; indeed, by~\cite[Pro\-po\-sition~14.7]{malletest}, we have that ${u}$ belongs to ${C_G(s)^0}$, the connected component of~${C_G(s)}$.

Now \cite[Corollary 14.3]{malletest} implies that there are in ${G}$ at most ${f(r)}$~${G}$-conjugacy classes of centralizers of semisimple elements. Furthermore \cite[\S1]{carter2} implies that if ${s\in G^F}$, then ${C_G(s)}^0$ is an ${F}$-stable reductive subgroup of maximal rank containing $s$, and moreover, that the number of ${G^F}$-conjugates of such centralizers is at most~${f(r)}$.

It will therefore be sufficient to prove that if ${C}$ is an ${F}$-stable subgroup of maximal rank, equal to ${C_G(s)^0}$ for some ${s\in G^F}$, then the unipotent elements of the ${G^F}$-conjugates of ${C^F\hspace*{-0.2mm}}$ can be covered by a set of ${p}$-subgroups of size~$\hspace*{-0.2mm}{f(r)N(G^F)}$. But the number of subgroups needed is at most
\[
\begin{split}
|\{G^F\textrm{-conjugates of }C^F\}| \cdot |\{\textrm{Sylow }p\textrm{-subgroups of }C^F\}| &\leq \frac{|G^F|}{|C^F|} \cdot  \frac{|C^F|}{|B^F|}
\\&=  \frac{|G^F|}{|B^F|},
\end{split}
\]
where ${B}$ is a Borel subgroup of ${C}$. In particular ${B}$ contains a maximal torus~${T}$ of~${G}$. Now Lemma~\ref{l: torus bounds} implies that ${|T^F|\geq f(r)q^r}$, and that ${|G^F|\leq f(r) q^{|\Phi|+r}}$. The result now follows.
\end{proof}

\section{Exact results for low rank groups}\label{s: exact}

In this section we calculate the value  of ${\omega_c(G)}$ for every~${c\in \longintegers}$, and for every $G$ in one of a number of families of Lie type of low rank. Some of these statistics have been calculated previously \cite{azad, aamz}.
In every case we establish the exact value of ${\omega_c(G)}$ by constructing a ${2}$-minimal ${c}$-nilpotent covering of~${G}$.

The groups we consider belong to the families ${A_1(q)}$, ${{}^2\!A_2(q)}$, ${{}^2\reduce B_2(2^{2m+1})}$ and ${{}^2G_2(3^{2m+1})}$. For the family
${A_1(q)}$, when ${q}$ is odd, there are two isogeny types to deal with, namely the groups ${\SL_2(q)}$ and ${\PGL_2(q)}$. Similarly, the family
${{}^2\!A_2(q)}$ comprises the groups ${\SU_3(q)}$ and ${\PGU_3(q)}$, which are non-isomorphic whenever ${q\equiv -1\pmod 3}$.

Before proceeding, we require a definition. Let ${\mathcal{P}=\{X_1, \dots, X_k\}}$ be a set of subgroups of a group~${G}$.
We say that ${\mathcal{P}}$ is a {\it partition} of~${G}$ if ${G=X_1\cup\cdots \cup X_k}$, and if ${X_i\cap X_j=\{1\}}$
for all ${1\leq i<j\leq k}$.

We will need the following preliminary result of Suzuki \cite{suzuki4}.

\begin{prop}\label{p: partition}
Let~${G}$ be an almost simple group admitting a partition~${\mathcal{P}}$. Then~${G}$ is one of\/ ${\PSL_2(q)}$ with ${q>3}$,
${\PGL_2(q)}$ with ${q>3}$, or ${{{}^2\reduce B_2(2^{2m+1})}}$ with ${m>0}$. Furthermore each group has a partition of form
${\mathcal{P}_T \cup \mathcal{P}_p}$, where~${\mathcal{P}_T}$ is the set of maximal tori in~${G}$, and~${\mathcal{P}_p}$
is the set of Sylow ${p}$-subgroups of~${G}$.
\end{prop}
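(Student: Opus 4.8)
The plan is to prove Proposition~\ref{p: partition} by first reducing the problem to a short list of candidate groups using the classification of groups admitting a partition, and then verifying the explicit form of the partition by elementary structural arguments about tori and Sylow $p$-subgroups. This is an attributed result of Suzuki, so the intended proof is essentially a citation together with an extraction of the precise structural statement we need; I would organise it accordingly.

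First I would appeal to Suzuki's classification of finite groups admitting a nontrivial partition (this is the content of \cite{suzuki4}). The groups in that classification are of a small number of types: $p$-groups of certain kinds, Frobenius groups, $\PSL_2(q)$, $\PGL_2(q)$, $\mathrm{Sz}(q) = {}^2\!B_2(2^{2m+1})$, and a handful of others. The restriction to \emph{almost simple} groups is precisely what cuts the list down to the three families in the statement: $p$-groups and Frobenius groups are not almost simple, and among the remaining possibilities one checks that only $\PSL_2(q)$ (with $q>3$, since $\PSL_2(2)$ and $\PSL_2(3)$ are not simple), $\PGL_2(q)$ (with $q>3$), and the Suzuki groups survive. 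So the first half of the proposition is a direct consequence of the classification, and I would present it as such, citing \cite{suzuki4} for the list and then eliminating the non-almost-simple entries.

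The second half — identifying the partition explicitly as $\mathcal{P}_T \cup \mathcal{P}_p$ — I would verify directly for each of the three families, since this is exactly the form we need later in \S\ref{s: exact}. For these twisted-rank-$1$ groups, Proposition~\ref{p: steinberg} and the structure theory of \S\ref{s: tori intro} tell us that every semisimple element lies in a maximal torus and every nontrivial $p$-element lies in a Sylow $p$-subgroup; since these groups have rank $1$, the maximal tori and the Sylow $p$-subgroups are precisely the maximal abelian (indeed malnormal, in the Frobenius-group sense) subgroups, and it is a standard fact that any two distinct maximal tori, or a torus and a Sylow $p$-subgroup, intersect trivially. The key point is that in a rank-$1$ group of Lie type the maximal tori together with the Sylow $p$-subgroups (which are the unipotent radicals of the Borel subgroups) exhaust the group and meet pairwise in the identity, which is exactly the defining property of a partition. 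I would note that this is well known and can be read off, for instance, from the order formulae and the description of the Borel and torus structure already recalled in \S\ref{s: tori intro}, or directly from \cite{suzuki4}.

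The main obstacle, such as it is, lies not in any deep argument but in confirming that the specific partition exhibited by Suzuki coincides with the torus/Sylow decomposition $\mathcal{P}_T \cup \mathcal{P}_p$ uniformly across all three families, including the Suzuki groups where the torus structure is slightly more delicate. Because the whole statement is quoted from \cite{suzuki4}, I expect the cleanest presentation is to cite Suzuki for both the classification and the explicit partition, adding only the brief observation that the three classes of maximal subgroups appearing in his partition are exactly the maximal tori and the Sylow $p$-subgroups in our terminology. I would therefore keep this proof very short, treating it as a translation of a known result into the language of algebraic groups used in the rest of the paper rather than as a computation to be carried out from scratch.
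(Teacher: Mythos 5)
The paper gives no proof of this proposition at all: it is stated as a direct citation of Suzuki \cite{suzuki4}, which is exactly the approach you take, so your proposal matches the paper's treatment (and your added verification that the tori and Sylow $p$-subgroups exhaust each group and meet pairwise trivially is correct). One small caveat: your parenthetical claim that the partition components are the maximal \emph{abelian} subgroups fails for the Suzuki groups, whose Sylow $2$-subgroups are nonabelian of class $2$, but this aside plays no role in the argument.
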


Note that, by Proposition~\ref{p: steinberg}, we have ${|\mathcal{P}_T| = q^{|\Phi|}=(|G|_p)^2}$.
Several of the arguments in this part of the paper fail for certain small values of~${q}$, and in some of these cases we have relied on
direct computation. In every such case, we have established the value of~${\omega_c(G)}$ by finding a ${2}$-minimal ${c}$-nilpotent cover for ${G}$.
These calculations were performed using the Magma computer algebra package \cite{Magma}.

\subsection{${{\mathrm{PGL_2(q)}}}$}

The main result in this section is the following.

\begin{thm}\label{thm: pgl2}
Let ${G=\PGL_2(q)}$, and let ${c\in\longintegers}$. Then~${G}$ has a ${2}$-minimal ${c}$-nilpotent cover, and
\[ \omega_c(G)=\begin{cases}
4  & \textrm{if\/ ${q=2}$,} \\
7 & \textrm{if\/ ${c>1}$ and ${q=3}$,} \\
10 & \textrm{if\/ ${c=1}$ and ${q=3}$,} \\
q^2+q+1 & \textrm{if\/ ${q\geq 4}$.}
\end{cases} \]
\end{thm}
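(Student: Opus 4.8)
The plan is to compute $\omega_c(\PGL_2(q))$ by constructing an explicit $2$-minimal $c$-nilpotent cover, since by Proposition~\ref{p: N suffices}(3) this simultaneously establishes both the existence claim and the exact value. The main structural tool is Suzuki's partition result (Proposition~\ref{p: partition}): for $q>3$ the group $G=\PGL_2(q)$ admits a partition $\mathcal{P}_T\cup\mathcal{P}_p$, where $\mathcal{P}_T$ is the set of maximal tori and $\mathcal{P}_p$ the set of Sylow $p$-subgroups. First I would recall the relevant subgroup structure of $\PGL_2(q)$: the maximal tori split into two conjugacy classes, the split tori (cyclic of order $q-1$) and the non-split tori (cyclic of order $q+1$), and the Sylow $p$-subgroups are elementary abelian of order $q$. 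All of these are abelian, hence $c$-nilpotent for every $c\in\longintegers$. A routine count using the normalizers (each torus is self-centralizing with normalizer of index giving $q(q+1)/2$ split tori and $q(q-1)/2$ non-split tori) together with $\nu_p(G)=q+1$ Sylow $p$-subgroups yields a candidate cover of size $q^2+q+1$.

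The key step is then to verify $2$-minimality of this cover $\mathcal{N}=\mathcal{P}_T\cup\mathcal{P}_p$ for $q\ge 4$. Because $\mathcal{N}$ is a genuine partition, any non-identity element lies in exactly one member, so I would produce a distinguished element in each subgroup and check that any two distinguished elements from distinct members generate a non-$c$-nilpotent subgroup. For elements of coprime order lying in different tori or in a torus and a Sylow $p$-subgroup, Lemma~\ref{lem: separate sets} (or rather the observation underlying it) reduces non-nilpotency to non-commutativity, which follows from the partition property since distinct members meet trivially and the elements are regular. The delicate point is pairs of elements drawn from \emph{two different tori of the same order}, or from two distinct Sylow $p$-subgroups: here the generated subgroup can have order divisible by $p$ and by a torus prime, and I must rule out $c$-nilpotency directly. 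For two Sylow $p$-subgroups this is immediate as in Lemma~\ref{lem: sylow}, since neither is normal in the join. For two split (or two non-split) tori, I would choose the distinguished elements to be generators and argue that the subgroup they generate is all of $\PGL_2(q)$ (or at least a non-nilpotent subgroup), using the classification of subgroups of $\PGL_2(q)$ (Dickson's theorem).

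The hard part, and the reason the formula branches, will be the small cases $q=2$ and $q=3$, where Suzuki's partition is unavailable and the subgroup lattice degenerates: $\PGL_2(2)\cong S_3$ and $\PGL_2(3)\cong S_4$. For these I would abandon the partition approach and compute directly, as the paper signals by its reliance on \textsc{Magma} for small $q$. In $S_3$ one checks by hand that the maximal non-nilpotent set has size $4$. For $q=3$ the value depends on $c$: when $c=1$ we are counting a maximal non-commuting set in $S_4$, giving $10$, whereas for $c>1$ the relevant nilpotent subgroups are larger (the Sylow $2$-subgroup $D_8$ is nilpotent of class $2$, hence $c$-nilpotent for $c\ge2$ but not $1$-nilpotent), which collapses several would-be distinguished elements into a single member of the cover and reduces the count to $7$. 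Thus the subtlety in the $q=3$ case is precisely that $1$-nilpotence (commutativity) and $c$-nilpotence for $c>1$ genuinely differ, and I would handle this by exhibiting the two different covers explicitly and confirming their $2$-minimality by inspection.
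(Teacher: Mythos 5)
Your proposal is correct and follows essentially the same route as the paper: Suzuki's partition $\mathcal{P}_T\cup\mathcal{P}_p$ of size $q^2+q+1$ for $q\geq 4$, distinguished elements in each member, and direct computation for $q\in\{2,3\}$. The only minor difference is that the paper verifies $2$-minimality by classifying the nilpotent subgroups of $\PGL_2(q)$ (cyclic, elementary abelian, or dihedral $2$-group), which reduces everything to non-commutativity once the distinguished elements are chosen of order greater than $2$, whereas you propose a case split invoking Dickson's theorem; both rest on the same subgroup-structure facts.
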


Before we prove Theorem~\ref{thm: pgl2} we state a lemma, which will be useful also in the next section when
we study the groups ${\SL_2(q)}$.

\begin{lem}\label{l: nilpotent in pgl2}
Let~${X}$ be a nilpotent subgroup of\/ ${\PGL_2(q)}$. Then one of the following holds.
\begin{enumerate}
\item ${X}$ is cyclic.
\item ${X}$ is elementary abelian.
\item ${X}$ is a dihedral ${2}$-group and $p$ is odd.
\end{enumerate}
\end{lem}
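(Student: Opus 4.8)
The plan is to classify nilpotent subgroups $X$ of $\PGL_2(q)$ by exploiting the well-understood subgroup structure of this group. The essential input is Dickson's classification of subgroups of $\PGL_2(q)$ (equivalently, the list of maximal subgroups): every subgroup is contained in a Borel subgroup (of shape $\mathbb{F}_q \rtimes \mathbb{F}_q^\times$, i.e.\ a group of order $q(q-1)$), a dihedral group of order $2(q-1)$ or $2(q+1)$ (the normalizers of the split and non-split tori), or is one of a small list of exceptional subgroups ($A_4$, $S_4$, $A_5$, or a subfield subgroup $\PSL_2(q_0)$ or $\PGL_2(q_0)$). I would work through these cases, in each instance identifying which nilpotent subgroups can arise.

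First I would dispose of the exceptional subgroups: none of $A_4$, $S_4$, $A_5$ is nilpotent, and their nilpotent subgroups are themselves cyclic or elementary abelian (cyclic of order dividing $4,3,5$, or the Klein four-group), so these contribute nothing outside conclusions (1) and (2). The subfield subgroups reduce to the same problem over a smaller field, so by induction they introduce no new types. The main work is with the two generic families. Inside a dihedral group $D_{2m}$, a nilpotent subgroup is either contained in the cyclic subgroup of index $2$ (hence cyclic, giving (1)), or it contains a reflection; a nilpotent dihedral group must be a $2$-group, and this is exactly where conclusion (3) enters. Here I would use that a dihedral $2$-subgroup occurs only when $p$ is odd, since the tori $\mathbb{F}_q^\times$ embedded in these dihedral groups have even order precisely when $q$ is odd.

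The Borel case is where I would spend the most care, and I expect it to be the main obstacle. A Borel subgroup $B = U \rtimes T$ has $U$ elementary abelian of order $q = p^a$ and $T$ cyclic of order $q-1$ acting on $U$. A nilpotent subgroup $X \leq B$ projects to a subgroup of $T$ and meets $U$ in a $p$-subgroup. The key point is that $T$ acts on $U$ by scalar multiplication (identifying $U$ with $\mathbb{F}_q^+$ and $T$ with $\mathbb{F}_q^\times$), so a non-trivial element of $T$ centralizes no non-trivial element of $U$. Hence if $X$ contains a non-trivial semisimple part, nilpotency (which forces the Sylow subgroups to commute, or more directly forces $[X \cap U, X] = 1$ on the $p$-part) compels $X \cap U = 1$, making $X$ cyclic inside a conjugate of $T$; and if $X \leq U$ then $X$ is elementary abelian. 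The careful step is arguing that a nilpotent subgroup cannot simultaneously have a non-trivial unipotent part and a non-trivial semisimple part, which follows from the fixed-point-free action together with the fact that in a nilpotent group each Sylow subgroup is normal and the group is the direct product of its Sylow subgroups.

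Assembling these cases yields exactly the three stated possibilities, with the dihedral $2$-group case carrying the parity restriction on $p$. I would streamline the exposition by noting first that any nilpotent subgroup lies in a maximal nilpotent subgroup, and that it suffices to determine the maximal nilpotent subgroups up to conjugacy; these are the Sylow $p$-subgroup $U$ (elementary abelian), the maximal tori (cyclic), and, when $p$ is odd, the Sylow $2$-subgroups of the dihedral torus-normalizers (dihedral $2$-groups). The verification that no other maximal nilpotent subgroup exists is immediate from Dickson's list once the two generic cases are handled as above.
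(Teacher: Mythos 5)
Your approach is essentially the paper's: the published proof handles $q=2,3$ directly and for $q>3$ simply cites a reference for the list of maximal subgroups of $\PGL_2(q)$ and $\PSL_2(q)$, asserting that the result "can readily be derived from this information" --- the derivation you spell out (Borel, torus normalizers, exceptional and subfield subgroups) is exactly that omitted work, and your treatment of the Borel and dihedral cases is correct. One small slip: the Sylow $2$-subgroup of $S_4$ is dihedral of order $8$, so the exceptional subgroups do \emph{not} contribute only to conclusions (1) and (2) as you claim. This does not threaten the lemma, since $S_4$ embeds in $\PGL_2(q)$ only for $p$ odd, where its $D_8$ is a dihedral $2$-group falling under conclusion (3); but the sentence as written should be corrected.
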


\begin{proof}
It is well known that ${\PGL_2(2)\cong S_3}$ and that ${\PGL_2(3)\cong S_4}$; in these cases it is easy to verify the result
directly. For ${q>3}$ we refer to \cite{king}, which gives a list of the maximal subgroups of
${\PGL_2(q)}$ and ${\PSL_2(q)}$; the result can readily be derived from this information.
\end{proof}

\begin{proof}[Proof of Theorem~\ref{thm: pgl2}]
When ${q=2}$ or ${q=3}$, the result can be calculated directly. We therefore suppose that ${q>3}$. Let
${\mathcal{P}=\mathcal{P}_T\cup\mathcal{P}_p}$ be the partition given in Proposition~\ref{p: partition}, and observe that
\[
{ |\mathcal{P}| = |\mathcal{P}_T|+|\mathcal{P}_p| = q^2+q+1}.
\]

The subgroups in ${\mathcal{P}}$ are abelian, and so by Lemma~\ref{lem: upper bound}, we have
${\omega_1(G)\leq |\mathcal{P}|}$. In each member of~${\mathcal{P}_T}$, we choose any element of order greater than~${2}$ to be a distinguished element; suitable elements exist since ${q>3}$, and by construction they are regular. In each member of ${\mathcal{P}_p}$, we choose any nontrivial element to be a distinguished element.

Let~${g}$ and~${h}$ be two distinct distinguished elements. We need to check that ${\langle g, h \rangle}$
is not nilpotent. If $p$ is odd, then Lemma~\ref{l: nilpotent in pgl2} implies that all nilpotent subgroups of ${\PGL_2(q)}$ are abelian, so it suffices to check that~${g}$ and~${h}$ do not commute. Indeed Lemma~\ref{l: nilpotent in pgl2} yields the same conclusion when $p$ is odd, since in this case both~${g}$ and~${h}$ have order greater than~${2}$.

Observe that the centralizer of~${g}$ is precisely that member
of~${\mathcal{P}}$ which contains~${g}$, and that this subgroup does not contain ${h}$. So ${\langle g, h\rangle}$ is not
nilpotent, and we see from Lemma~\ref{lem: lower bound} that~${\omega_\infty(G)\geq |\mathcal{P}|}$.
Now the result follows, since ${\omega_1(G)\geq \omega_\infty(G)}$.
\end{proof}

\subsection{${{\SL_2(q)}}$}

Consider the situation when ${G=\SL_2(q)}$. In the case that ${q}$ is even we have ${\SL_2(q)\cong \PGL_2(q)}$, and the result of the previous section applies. Thus in this section we assume that~${q}$ is odd. Our results generalize the main result of \cite{azad}.

\begin{thm}\label{thm: psl2}
Let ${q}$ be odd, and let ${G=\SL_2(q)}$. For every ${c\in\longintegers}$, the group~${G}$ has a ${2}$-minimal ${c}$-nilpotent cover,
and
\[ \omega_c(G)=\begin{cases}
5 & \textrm{if\/ ${q=3}$,} \\
21 & \textrm{if\/ ${q=5}$,}   \\
q^2+q+1 & \textrm{if\/ ${q>5}$.}                               \end{cases}
 \]
\end{thm}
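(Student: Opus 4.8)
The plan is to follow the same architecture as the proof of Theorem~\ref{thm: pgl2}, but with the added subtlety introduced by the centre of $\SL_2(q)$, which has order $2$ when $q$ is odd. The governing principle is Proposition~\ref{p: N suffices}: I will exhibit a $c$-nilpotent cover $\mathcal{N}$ of $G$ and show it is $2$-minimal, whence $\omega_c(G) = |\mathcal{N}|$. The natural candidate for the cover comes from the partition of $\PSL_2(q) = G/Z(G)$ supplied by Proposition~\ref{p: partition}: taking preimages in $\SL_2(q)$ of the maximal tori and Sylow $p$-subgroups of $\PSL_2(q)$ gives a family of subgroups of $G$. The key structural point is that a central element is absorbed harmlessly, so by Lemma~\ref{lem: quasisimple} the counting of distinguished elements transfers cleanly between $\SL_2(q)$ and its central quotient, and one expects $|\mathcal{N}| = q^2+q+1$ in the generic range, matching the $\PGL_2(q)$ answer.

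The first step is to classify the nilpotent subgroups of $\SL_2(q)$ for $q$ odd, in analogy with Lemma~\ref{l: nilpotent in pgl2}: using the correspondence with $\PGL_2(q)$ and the fact that $-I$ is central of order $2$, a nilpotent subgroup is either cyclic, or a (generalized) quaternion or dihedral $2$-group, or elementary abelian unipotent. The second step is to build the distinguished elements. As in the $\PGL_2(q)$ argument, I choose in each torus-type member an element whose image in $\PSL_2(q)$ is regular of order exceeding $2$, and in each unipotent member a nontrivial $p$-element; I then verify via Lemma~\ref{lem: quasisimple} that distinct distinguished elements generate a non-$c$-nilpotent subgroup, reducing the check to the already-established non-commuting statement downstairs. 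The centralizer of each distinguished element is exactly the member of $\mathcal{N}$ containing it (together with the centre), which delivers $2$-minimality. Combining Lemma~\ref{lem: upper bound} (all members abelian, giving $\omega_1(G)\le|\mathcal{N}|$) with Lemma~\ref{lem: lower bound} (the distinguished elements give $\omega_\infty(G)\ge|\mathcal{N}|$) and the sandwich $\omega_1(G)\ge\omega_\infty(G)$ pins down every $\omega_c(G)$ in the generic case.

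The main obstacle I anticipate is the small-$q$ behaviour, precisely the cases $q=3$ and $q=5$ where the formula $q^2+q+1$ fails (giving $13$ and $31$ rather than the stated $5$ and $21$). Here the partition argument degrades because the tori are too small to contain regular elements of order greater than $2$—when $q=3$ the nonsplit torus has order $q+1=4$ and the split torus order $q-1=2$, so there is essentially no room to choose good distinguished elements—and because $-I$ collapses distinct elements together. For these exceptional values I expect to fall back on direct computation (the paper flags the use of Magma for exactly such cases), explicitly producing a $2$-minimal $c$-nilpotent cover of the stated size and confirming the tabulated values of $\omega_c(G)$. A secondary point requiring care is the dependence on $c$: I must confirm that the non-abelian nilpotent $2$-subgroups permitted by the classification above do not allow a smaller cover for finite $c>1$ in the generic range, so that the value $q^2+q+1$ is genuinely independent of $c$ once $q>5$; this follows because the distinguished elements form a non-$1$-nilpotent, hence non-$c$-nilpotent, set for every $c$, while the abelian cover bounds $\omega_1$ from above.
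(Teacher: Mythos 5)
Your proposal is correct and follows essentially the same route as the paper: lift the partition of $\PSL_2(q)$ from Proposition~\ref{p: partition} through the centre to obtain an abelian cover of size $q^2+q+1$, choose in each member a distinguished element whose image in $\PSL_2(q)$ has order greater than~$2$, reduce the non-nilpotency check to the quotient via Lemmas~\ref{lem: 1} and~\ref{lem: quasisimple} (thereby sidestepping the generalized quaternion subgroups you rightly flag), and settle $q\in\{3,5\}$ by direct computation. The only difference is cosmetic: the paper does not bother to classify nilpotent subgroups of $\SL_2(q)$ itself, since the reduction to $\PSL_2(q)\le\PGL_2(q)$ makes Lemma~\ref{l: nilpotent in pgl2} suffice.
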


\begin{proof}
If ${q=3}$ or ${5}$, we calculate the value of ${\omega_c(G)}$ directly. Assume that ${q>5}$ and let ${H=G/Z(G)\cong \PSL_2(q)}$. Let
${\mathcal{P}=\mathcal{P}_T\cup\mathcal{P}_p}$ be the partition of ${H}$ given in Proposition~\ref{p: partition}; we observe that
${|\mathcal{P}| = |\mathcal{P}_T|+|\mathcal{P}_p| = q^2+q+1}$.
For each member, ${\hat{\,}N}$, of ${\mathcal{P}}$, define ${N}$ to be the unique subgroup of ${G}$ that contains ${Z(G)}$ and satisfies ${N/Z(G)=\hat{\,}N}$. Let ${\mathcal{N}}$ be the set of all such ${N}$ and observe that ${|\mathcal{N}|=|\mathcal{P}|=q^2+q+1}$, that ${\mathcal{N}}$ covers ${G}$, that all members of ${\mathcal{N}}$ are abelian, and that the intersection of any two members of ${\mathcal{N}}$ equals ${Z(G)}$.

Now in every member ${N}$ of ${\mathcal{N}}$ we choose an element ${g}$ such that ${gZ(G)\in H}$ has order greater than ${2}$. From here the proof is identical to that of Theorem~\ref{thm: pgl2}.
\end{proof}

\subsection{${{\SU_3(q)}}$}\label{s:SU3}

Our main result in this section is the following.
\begin{thm}\label{thm: su}
 Let ${G=\SU_3(q)}$. For every ${c\in\longintegers}$, the group~${G}$ has a ${2}$-minimal ${c}$-nilpotent cover,
and
\[ \omega_c(G)= \begin{cases}
31 & \textrm{if\/ ${c\ge 2}$ and ${q=2}$,} \\
10 & \textrm{if\/ ${c=1}$ and ${q=2}$,} \\
757 & \textrm{if\/ ${c\ge 2}$ and ${q=3}$,} \\
q^6 + q^5 + q^3 + q^2 + 1 & \textrm{if\/ ${c\ge 2}$ and ${q> 3}$,} \\
q^6 + q^5 + \frac{1}{\delta}q^4 + \frac{1}{\delta}q^3 + q^2 + \frac{1}{\delta}q + \frac{1}{\delta} & \textrm{if\/ ${c=1}$ and ${q > 2}$,}
\end{cases} \]
where $\delta=|Z(G)|$.
\end{thm}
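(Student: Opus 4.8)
The plan is to establish the formula for $\omega_c(\SU_3(q))$ by constructing an explicit $2$-minimal $c$-nilpotent cover $\mathcal{N}$ of $G$ in each case, and then invoking Proposition~\ref{p: N suffices}(3) to conclude that $\omega_c(G) = |\mathcal{N}|$. The small cases $q=2,3$ will be handled by direct computation in Magma, as the introduction to this section warns that several arguments fail for small $q$. So I would concentrate on the generic cases: $c\geq 2$ with $q>3$, and $c=1$ with $q>2$. The first step is to assemble a description of the nilpotent (respectively abelian) subgroup structure of $\SU_3(q)$, analogous to Lemma~\ref{l: nilpotent in pgl2}: I would use the known list of maximal subgroups and the conjugacy-class structure of $\SU_3(q)$ to classify the maximal abelian subgroups and the maximal nilpotent subgroups. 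The key players will be the Sylow $p$-subgroup $U^F$ (the unipotent radical of a Borel, which in type ${}^2A_2$ is non-abelian of order $q^3$) and the three classes of maximal tori, whose orders are $(q+1)^2/\delta$-type, $(q^2-1)/\delta$-type, and the cyclic Singer-type torus of order $(q^2-q+1)/\delta$ associated with the distinguished torus of \S\ref{s: classical}.

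For the case $c\geq 2$, the plan is to build the cover from maximal nilpotent subgroups. The counts $q^6+q^5+q^3+q^2+1$ should decompose as the number of conjugates of each relevant nilpotent subgroup, plus one: the Sylow $p$-subgroups contribute their count $\nu_p(G^F)$ (by Lemma~\ref{lem: sylow} and Proposition~\ref{p: springersteinberg} these behave well), and each class of maximal tori (or the nilpotent normalizer thereof, where the relevant torus is not self-normalizing nilpotently) contributes its number of conjugates. I would compute each $|G^F : N_{G^F}(X)|$ explicitly using the known torus normalizer orders, verify that the sum matches the claimed polynomial, and then check $2$-minimality by choosing a distinguished element in each member — a regular unipotent element in each Sylow $p$-subgroup (using Lemma~\ref{lem: regular unipotent set}) and a regular semisimple element of primitive-prime-divisor order in each torus (using Lemma~\ref{lem: regular semisimple set} and Lemma~\ref{l: classical}). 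The commuting condition across different members then follows from regularity exactly as in the proof of Theorem~\ref{thm: putting sets together}.

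For the case $c=1$, the cover must consist of abelian subgroups, so the non-abelian Sylow $p$-subgroup $U^F$ must be broken up into its maximal abelian subgroups; this is precisely what produces the extra $\frac1\delta q^4 + \frac1\delta q^3 + \frac1\delta q + \frac1\delta$ terms and the $\delta$-dependence. Here I would identify the maximal abelian subgroups of $U^F$: the center $Z(U^F)$ has order $q$, and the maximal abelian subgroups of $U^F$ containing it have order $q^2$ and are in bijection with a projective line, giving $q+1$ of them per Sylow subgroup, with appropriate $\delta$-corrections coming from $Z(G)$. Reconciling the exact power-of-$q$ bookkeeping with the stated coefficients $\frac1\delta$ is the step I would do most carefully, and it is where the distinction between $\SU_3$ and $\PGU_3$ (the factor $\delta=|Z(G)|=\hcf(3,q+1)$) enters.

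The main obstacle I anticipate is verifying $2$-minimality of the abelian cover in the $c=1$ case, specifically choosing distinguished elements inside the overlapping maximal abelian subgroups of a single Sylow $p$-subgroup so that no two of them generate an abelian group. Since these abelian subgroups all share the center $Z(U^F)$, the distinguished elements must be chosen outside the pairwise intersections, and I must show via condition~\eqref{e: sylow}-style arguments that such elements exist and that distinguished elements drawn from different Sylow subgroups (or from a torus and a unipotent subgroup) fail to commute. The cross-type non-commuting, by contrast, will follow cleanly from regularity and from Proposition~\ref{prop: t divides W} as in Theorem~\ref{thm: putting sets together}; so the genuinely delicate part is the internal combinatorics of $U^F$ together with the $\delta$-dependent counting.
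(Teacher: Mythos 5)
Your overall strategy (explicit $2$-minimal cover, Magma for $q\le 3$, splitting the Sylow $p$-subgroup into abelian pieces for $c=1$) matches the paper, but there is a genuine gap: your proposed cover for $c\ge 2$, consisting of the Sylow $p$-subgroups together with the maximal tori (or nilpotent normalizers thereof), is not a cover of $\SU_3(q)$. It omits the \emph{mixed} elements $g=su$ with both Jordan parts nontrivial modulo the centre; such an element lies in no Sylow $p$-subgroup and in no maximal torus. Concretely, if $u\in Z(U)$ for $U$ a Sylow $p$-subgroup, then $C_G(u)=U\rtimes C_{q+1}$, and the products $uh$ with $h$ of order $q+1$ must be covered by the abelian subgroups $M=Z(U)\times C_{q+1}$. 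The paper shows $N_G(M)=Z(U)\rtimes C_{q^2-1}$, so $|M^G|=q^2(q^3+1)=q^5+q^2$, and this class is exactly what supplies the $q^5+q^2$ in the stated formula. Your own arithmetic check would expose this ($q^6+\nu_p(G)=q^6+q^3+1$ falls short), but as written the proposal lacks the structural ingredient needed to close the gap, and the same class $M^G$ is needed again in the $c=1$ cover.

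A second, smaller gap concerns $2$-minimality of the torus part. A uniform appeal to primitive prime divisors via Lemmas~\ref{lem: regular semisimple set} and~\ref{l: classical} works only for the Singer-type torus $C_{q^2-q+1}$ (using a primitive prime divisor of $q^6-1$, which is automatically coprime to $|W|=6$). For the tori $C_{q^2-1}$ and $C_{q+1}\times C_{q+1}$ the relevant primes divide $q\mp1$, and there need not exist one coprime to $6$: the paper has to treat separately the cases $q-1=2^b$, $9\mid q+1$, $12\mid q+1$, $q+1=6$ and $q+1=2^a$, and in several of these the distinguished element is contained in nilpotent subgroups strictly larger than the torus (e.g.\ $N_G(T_0)$ or a Sylow $2$-subgroup), so that $2$-minimality requires an ad hoc argument rather than the clean regularity argument of Theorem~\ref{thm: putting sets together}. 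Finally, two points of bookkeeping: in $\SU_3(q)$ the torus orders are $(q+1)^2$, $q^2-1$ and $q^2-q+1$ without division by $\delta$; and the $\delta$ in the $c=1$ formula enters through $|N_G(U_0)|=q^3\cdot\delta(q-1)$, i.e.\ through the index of the normalizer of the abelian subgroup $U_0$ of order $q^2$, not through adjoining $Z(G)$ or through a count of $q+1$ such subgroups per Sylow subgroup.
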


Recall that $|Z(G)|$ is equal to the greatest common divisor of $3$ and $q+1$.

\begin{proof}
When ${q\leq 3}$, the number ${w_c(G)}$ can be computed directly. We assume therefore that ${q>3}$ and we construct a ${2}$-minimal ${c}$-nilpotent cover of ${G}$. In what follows we write $Z$ for $Z(G)$.

{\bf Covering unipotent elements.}
Let ${U}$ be a Sylow ${p}$-subgroup of ${G}$, where ${q=p^a}$ for some positive integer ${a}$. Then $|Q|=q^3$ and $|U^G| = q^3+1$. On the other hand an easy calculation confirms that $G$ contains precisely $(q^3+1)(q^3-1)$ non-trivial unipotent elements. We conclude immediately that any two distinct Sylow $p$-subgroups of $G$ intersect non-trivially. In particular if $g\in U\backslash\{1\}$, and $N$ is a nilpotent group containing $g$, then the Sylow $p$-subgroup of $N$ is a subgroup of $U$.

Suppose first that ${g\in Z(U)}$. Then ${C_G(g) = U\rtimes C_{q+1}}$, and we let ${h}$ be an element of ${C_G(g)}$ of order ${q+1}$; one can check that \[{L:= C_{G}(h)\cong \GU_2(q)\cong \SL_2(q)\rtimes C_{q+1}}.\] Now ${g}$ lies in a unique Sylow ${p}$-subgroup of ${L}$ and we conclude that \[{M:= C_L(g)\cong Z(U)\times C_{q+1}}\] is the only maximal nilpotent subgroup containing ${gh}$ (in fact ${M}$ is abelian). Since ${Z(U)}$ is a characteristic subgroup of ${M}$, we conclude that \[{N_G(M)\leq N_G(Z(U))= U\rtimes C_{q^2-1}}.\] It is easy to check that ${N_G(M) = Z(U)\rtimes C_{q^2-1}}$ and so $|M^G| = q^2(q^3+1)$.

Suppose next that ${g\in U\backslash Z(U)}$. Then ${C_G(g)=U_0\times Z}$, where ${U_0}$ is an abelian subgroup of ${U}$ of order ${q^2}$. We conclude that any nilpotent subgroup containing ${g}$ must equal ${Q\times Z(G)}$, where ${Q}$ is a ${p}$-group. Since $U$ is the only Sylow $p$-subgroup containing $g$, we conclude that $U\times Z(G)$ is the unique maximal nilpotent subgroup of $G$ that contains $g$.

Observe that ${U\times Z}$ has nilpotency class ${2}$ and, since ${N_G(U)=U\rtimes C_{q^2-1}}$, we see that
  \[ |(U\times Z)^G=|U^G| = q^3+1. \]
 Similarly it is clear that ${C_G(g') = U_0\times Z}$ for all elements ${g'}$ of ${U_0\backslash Z(U)}$. It follows, in particular, that ${U_0\times Z}$ is the only maximal abelian subgroup of ${G}$ containing these elements. Since ${N_G(U_0) = U\rtimes C_{\delta(q-1)}}$, we conclude that
 \[ |(U_0\times Z)^G|=|U_0^G| = \frac{1}{\delta}(q^4 + q^3 + q + 1). \]

{\bf Covering mixed elements.} Recall that a \emph{mixed} element of $G$ is one that is neither unipotent nor semisimple. If $g$ is one such, then ${g=su}$, where ${s}$ is semisimple and ${u}$ is unipotent, and where ${[s,u]=1}$. By the above remarks, we obtain that either $s\in Z$, or else $u$ lies in the centre of some conjugate of $U$ and ${s\in C_G(u)}$. It follows that ${g}$ lies in some conjugate of ${M}$ or some conjugate of $U_0\times Z$.

{\bf Covering semisimple elements.}
We are left with the task of finding a set of ${c}$-nilpotent subgroups of ${G}$ containing all semisimple elements of ${G}$. Every semisimple element of ${G}$ lies in a maximal torus of ${G}$; each of these tori is isomorphic to one of ${C_{q^2-1}}$, ${C_{q+1}\times C_{q+1}}$ or~${C_{q^2-q+1}}$. We consider these in turn.

{\bf Tori isomorphic to ${C_{q^2-q+1}}$.}
Let $T_2$ be a maximal torus of $G$ isomorphic to $C_{q^2-q+1}$. Since ${q>2}$, Zsigmondy's theorem implies that there exists a primitive prime divisor ${t}$ of ${q^6-1}$. Immediately from its definition, we see that ${t}$ divides ${q^2-q+1}$, and is coprime to ${q^2-1}$. This implies, in particular, that $T_2$ contains a Sylow ${t}$-subgroup of ${G}$. Moreover, if ${g}$ is a generator of this Sylow ${t}$-subgroup, then ${g}$ is regular, i.e.\ ${C_G(g)=T_2}$; so any nilpotent subgroup of ${G}$ containing ${g}$ must be a subgroup of ${T_2}$.

{\bf Tori isomorphic to $C_{q^2-1}$.}
Let ${T_0}$ be a maximal torus of $G$ isomorphic to ${C_{q^2-1}}$. Suppose, first, that ${q-1}$ is divisible by an odd prime ${t}$. Then ${t}$ does not divide ${|G|/|T_0|}$ and so ${T_0}$ contains a Sylow ${t}$-subgroup of ${G}$. Moreover, any element ${g}$ of order~${t}$ must be regular, i.e.\ ${C_G(g)=T_0}$. This implies, in particular, that any nilpotent subgroup of ${G}$ containing ${g}$ is a subgroup of ${T_0}$.

Suppose, on the other hand, that ${q-1}$ is not divisible by an odd prime; so ${q-1=2^b}$ for some positive integer~${b}$. Then ${N_G(T_0)}$ is a nilpotent group containing a Sylow ${2}$-subgroup ${S}$ of ${G}$. Now ${S}$ has order $4(q-1)$ and $S\cap T_0$ is cyclic of order $2(q-1)$; furthermore any element in $S\backslash (S\cap T_0)$ has order $2$ or $4$. We set $g$ to be a generator of $S\cap T_0$ and observe that $g$ is regular and has order at least $8$. This implies, in particular, that any nilpotent subgroup of ${G}$ containing ${g}$ is a
subgroup of ${ST_0=N_G(T_0)}$, and so ${\langle g,g'\rangle}$ is
not nilpotent for any conjugate ${g'}$ of ${g}$ which is not contained in ${T_0}$.

{\bf Tori isomorphic to ${C_{q+1}\times C_{q+1}}$.}
Let ${T_1}$ be a maximal torus isomorphic to ${C_{q+1}\times C_{q+1}}$. Suppose that ${q+1}$ is divisible by a prime ${t>3}$. Then, just as in the previous case, it follows that ${T_1}$ contains a Sylow ${t}$-subgroup of ${G}$, and a regular element ${g}$ of order ${t}$, such that any nilpotent subgroup of ${G}$ containing ${g}$ is a subgroup of ${T_1}$.

Suppose that ${q+1}$ is divisible by ${9}$. Then ${T_1}$ contains a regular element ${g}$ of order ${9}$ and ${C_G(g)=T_1}$. On the other hand observe that if ${S}$ is a Sylow ${3}$\hbox{-}sub\-group of ${G}$, then ${S}$ is a subgroup of the normalizer of some conjugate of ${T_1}$, and all elements of order ${9}$ lie in that conjugate. In this case we conclude that any nilpotent subgroup of ${G}$ that contains ${g}$ lies in ${N_G(T_1)}$. In particular, if ${g'}$ is any conjugate of ${g}$ that does not lie in ${T_1}$, then ${\langle g, g'\rangle}$ is not nilpotent.

Suppose next that ${q+1}$ is divisible by ${12}$. Then ${T_1}$ contains a regular element ${g}$ of order ${12}$ such that both ${g^3}$ and ${g^4}$ are regular, i.e.\
\[
C_G(g)=C_G(g^3)=C_G(g^4)=T_1.
 \]
 This implies, in particular that if ${N}$ is a nilpotent subgroup of ${G}$ containing ${g}$, then all odd order subgroups of ${N}$ lie in ${T_1}$, and all subgroups of order coprime to ${3}$ lie in ${T_1}$; in particular ${N}$ itself is a subgroup of ${T_1}$. Again we conclude that, if ${g'}$ is any conjugate of ${g}$ that does not lie in ${T_1}$, then ${\langle g, g'\rangle}$ is not nilpotent.

Assume next that ${q+1=6}$. One can verify directly that if ${g}$ is a regular element of order ${6}$ in ${T_1}$, and if ${g'}$ is any ${G}$-conjugate of ${g}$, then ${\langle g,g'\rangle}$ is nilpotent if and only if ${g'\in T_1}$.

Assume, finally, that ${q+1=2^a}$ for some ${a}$, and let ${g}$ be a regular element in ${T_1}$; then ${C_G(g)=T_1}$ is a ${2}$-group, and so any nilpotent subgroup of ${G}$ that contains ${g}$ must lie in some Sylow ${2}$-subgroup ${S}$ of ${G}$. By comparing orders, we see that ${S}$ contains a conjugate of ${T_1}$ as a subgroup of index ${2}$; without loss of generality, we assume that ${S}$ contains ${T_1}$. Let ${h\in S\backslash T_1}$ and let ${\lambda_1, \lambda_2, \lambda_3}$ be the eigenvalues of ${h}$. An easy calculation implies that, up to relabelling, ${\lambda_2=-\lambda_1}$. Let ${\zeta}$ be a generator of the cyclic subgroup of ${\mathbb{F}_{q^2}}$ of order ${q+1}$. We may take~${g}$ to be an element with eigenvalues ${\zeta, \zeta^{\frac{q-1}{2}}, -1}$. Observe that ${\det g = 1}$, that the eigenvalues are distinct (so ${g}$ is regular), and that none of these eigenvalues is equal to ${-1}$ times any of the others. This implies that if ${S'}$ is any Sylow ${2}$-subgroup of ${G}$ containing ${g}$,
then ${g}$ lies in the unique maximal torus of ${G}$ contained as
an index ${2}$ subgroup in ${S}$. Since ${g}$ therefore lies in a unique maximal torus of ${G}$, namely ${T_1}$, we conclude that any nilpotent subgroup of ${G}$ that contains ${g}$ must be a subgroup of one of the three Sylow ${2}$-subgroups of ${G}$ that normalize ${T_1}$.

{\bf The size of the cover.}
Our calculations now yield a ${2}$-minimal ${c}$-nilpotent cover, ${\mathcal{N}}$, of ${G}$. If ${c\geq 2}$, set
\[ \mathcal{N} = (U\times Z)^G \cup M^G \cup T_0^G \cup T_1^G \cup T_2^G. \]
If ${c=1}$, set
\[ \mathcal{N} = (U_0\times Z)^G \cup M^G \cup T_0^G \cup T_1^G \cup T_2^G. \]
We have already observed that ${U_0^G\cup M^G}$ (and hence ${U^G\cup M^G}$) contains every non-semisimple element of ${G}$, while it is well~known that the set of all maximal tori, $T_0^G \cup T_1^G \cup T_2^G$, between them contain all semisimple elements of ${G}$. Thus~${\mathcal{N}}$ is, in each case, a ${c}$-nilpotent cover of ${G}$.

On the other hand our calculations imply that each member ${N}$ of ${\mathcal{N}}$ contains an element ${g}$ such that any ${c}$-nilpotent subgroup containing ${g}$ must lie in ${N}$ (or, in a couple of exceptional cases, in a small overgroup of ${N}$). Furthermore no member of ${\mathcal{N}}$ is a subgroup of any other member of ${\mathcal{N}}$ (or, in the exceptional cases, a subgroup of the relevant overgroups of any other member of ${\mathcal{N}}$). Hence ${\mathcal{N}}$ is 2-minimal, as required.

In order to calculate the order of ${\mathcal{N}}$ recall that ${|T_0^G\cup T_1^G \cup T_2^G| = q^6}$ by Proposition~\ref{p: steinberg}. The result follows.
\end{proof}

\subsection{${{\mathrm{\PGU_3(q)}}}$}

In this section we assume that ${q\equiv -1\pmod 3}$, since otherwise ${\PGU_3(q)\cong \PSU_3(q)}$ and the results of \S\ref{s:SU3}, combined with Lemma~\ref{lem: quasisimple}, yield the value of ${\omega_c(G)}$.

\begin{thm}\label{thm: pgu}
 Let ${G=\PGU_3(q)}$ with ${q\equiv -1\pmod 3}$. For every $c\in\longintegers$, the group~${G}$ has a ${2}$-minimal ${c}$-nilpotent cover,
and
\[ \omega_c(G)=\begin{cases}
49 & \textrm{if\/ ${c\ge 2}$ and ${q=2}$,} \\
71 & \textrm{if\/ ${c=1}$ and ${q=2}$,} \\
q^6 + q^5 + q^3 + q^2 + 1 & \textrm{if\/ ${c\ge 2}$ and ${q> 2}$,} \\
q^6 + q^5 + q^4 + q^3 + q^2 + q + 1 & \textrm{if\/ ${c=1}$ and ${q> 2}$.}
\end{cases} \]
 \end{thm}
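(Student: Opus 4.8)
The plan is to construct an explicit $2$-minimal $c$-nilpotent cover of $G=\PGU_3(q)$, following closely the strategy already carried out for $\SU_3(q)$ in the proof of Theorem~\ref{thm: su}. The cases $q=2$ are settled by direct computation in Magma, so I assume throughout that $q>2$. A helpful preliminary remark is that, since $Z:=Z(\GU_3(q))$ is central with $\PGU_3(q)=\GU_3(q)/Z$, Lemma~\ref{lem: quasisimple} shows that passing to this central quotient leaves $\omega_c$ unchanged; one may therefore describe centralizers and tori in whichever of $\GU_3(q)$ or $\PGU_3(q)$ is more convenient. The one structural feature that forces every step to be redone rather than quoted is that $G$ is now centreless: the factors of $Z$ appearing in the subgroups $M$, $U\times Z$ and $U_0\times Z$ of the $\SU_3$ argument disappear, and, crucially, the normaliser of the abelian unipotent subgroup $U_0$ (of order $q^2$) shrinks by a factor of $3$. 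Since $|\PGU_3(q)|=|\SU_3(q)|$, this is exactly what converts the coefficients $\tfrac{1}{\delta}$ of Theorem~\ref{thm: su} (where $\delta=\gcd(3,q+1)=3$ here) into integers, and yields the polynomial $(q^7-1)/(q-1)$ in the case $c=1$.

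First I would cover the unipotent elements. Taking $U$ a Sylow $p$-subgroup of $G$ (of order $q^3$ and $G$-class of size $q^3+1$), I would show as before that distinct Sylow $p$-subgroups meet non-trivially, so that any nilpotent subgroup through a nontrivial $p$-element lies in a fixed $U$. Separating the cases $g\in Z(U)$ and $g\in U\setminus Z(U)$, I would identify the maximal $c$-nilpotent (for $c\ge 2$) or maximal abelian (for $c=1$) overgroup of each, compute its $G$-normaliser, and read off the class size. The subgroup $M$ playing the role of the maximal nilpotent overgroup of a central unipotent element, and the subgroup $U$ (for $c\ge 2$) or $U_0$ (for $c=1$), must all be re-examined with $Z$ trivial; here $|M^G|=q^2(q^3+1)$ and $|U^G|=q^3+1$ come out as in the $\SU_3$ case, while $|U_0^{G}|=q^4+q^3+q+1$ comes out without the fractional coefficient of the $\SU_3$ computation. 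The mixed elements are then covered automatically: writing $g=su$ by Jordan decomposition, the analysis above places $su$ in a conjugate of $M$ or of $U_0$.

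Next I would cover the semisimple elements, all of which lie in maximal tori. There are three classes of these, the images in $G$ of the tori of type $C_{q^2-1}$, $C_{q+1}\times C_{q+1}$ and $C_{q^2-q+1}$. For each I would exhibit a regular element whose centraliser is the torus itself: a Zsigmondy primitive prime divisor $t$ of $q^6-1$ handles the $C_{q^2-q+1}$ torus, while suitable prime divisors of $q-1$ and $q+1$ handle the other two, exactly as for $\SU_3$. The delicate points remain the cases in which $q-1$ or $q+1$ is close to a prime power, where one instead works inside a Sylow $2$- or $3$-subgroup normalising the torus and argues by an explicit eigenvalue computation that a chosen regular element lies in a unique maximal torus. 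Note that the hypothesis $q\equiv-1\pmod 3$ forces $3\mid q+1$, which rules out the case $q+1=2^a$ and so streamlines the $C_{q+1}\times C_{q+1}$ analysis relative to $\SU_3$.

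Finally I would assemble the cover $\mathcal N$ from the conjugacy classes of the unipotent-covering subgroups together with the three torus classes, namely $U^G\cup M^G\cup T_0^G\cup T_1^G\cup T_2^G$ when $c\ge 2$ and $U_0^G\cup M^G\cup T_0^G\cup T_1^G\cup T_2^G$ when $c=1$. I would verify $2$-minimality by exhibiting in each member a distinguished element whose only $c$-nilpotent overgroups lie in that member (or in a controlled small overgroup in the exceptional cases), and check that no member is contained in another. Summing the class sizes, and using Steinberg's count (Proposition~\ref{p: steinberg}) that the three torus classes contain $q^6$ tori in total, produces the stated formulae. The main obstacle, just as in Theorem~\ref{thm: su}, is the careful bookkeeping of normaliser indices together with the small-prime case analysis for the tori; the novelty here is entirely in how the trivial centre of $G$ alters those indices, so the computation must be run afresh rather than deduced from the $\SU_3$ result.
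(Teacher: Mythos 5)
Your proposal matches the paper's proof: the paper likewise settles $q=2$ by direct computation and for $q>2$ builds the cover $(\,\hat{\,}U)^G\cup M_0^G\cup\mathcal{T}$ (resp.\ $(\,\hat{\,}U_0)^G\cup M_0^G\cup\mathcal{T}$ for $c=1$) by rerunning the $\SU_3(q)$ argument with the centre trivial, which is exactly your plan, and your accounting of how the normaliser indices change to remove the $\tfrac{1}{\delta}$ factors is consistent with the stated formulae. No gaps.
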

\begin{proof}
For ${q=2}$ we compute the value of ${\omega_c(G)}$ directly. For ${q>2}$ our argument is virtually identical to that of \S\ref{s:SU3}, and so we give only a brief summary here. We define a family ${\mathcal{N}}$ of subgroups of ${G}$ as follows. If ${c\geq 2}$, set
\[ \mathcal{N} = (\,\hat{\,}U)^G \cup M_0^G \cup \mathcal{T}. \]
If ${c=1}$, set
\[ \mathcal{N} = (\,\hat{\,}U_0)^G \cup M_0^G \cup \mathcal{T}. \]
Here ${\mathcal{T}}$ is the set of maximal tori of ${G}$; these come in three conjugacy classes, just as in ${\SU_3(q)}$. The group ${\hat{\,}U}$ is a Sylow ${p}$-subgroup of ${G}$ while ${\hat{\,}U_0}$ is the subgroup of ${{\PSU}(3,q)}$ equal to the projection of the group ${U_0}$ given in ${\SU_3(q)}$.

Finally one constructs ${M_0}$ by taking ${g\in Z(\,\hat{\,}U)}$. Then
${C_G(g)=U\rtimes C_{q+1}}$. Let ${C}$ be a cyclic subgroup of ${C_G(g)}$ of order ${q+1}$. Then define
${M_0:=  Z(\,\hat{\,}U)\times C}$.
 To show now that ${\mathcal{N}}$ is a 2-minimal ${c}$-nilpotent cover of ${G}$, one follows the line of argument of \S\ref{s:SU3}.
\end{proof}

\subsection{${{{}^2\reduce B_2(2^{2m+1})}}$}

In this section we deal with the Suzuki groups. Our main result generalizes \cite[Theorem 1.2]{aamz} which gives the exact value for ${\omega_1(G)}$ in the case that $G\cong{{{}^2\reduce B_2(2^{2m+1})}}$ for some $m\geq 1$.

It will be convenient to redefine the variable ${q}$, which hitherto has been defined as the level of the Frobenius endomorphism ${F}$. For the group
${{{}^2\reduce B_2(2^{2m+1})}}$ this would give the fractional power ${q=2^{m+\frac12}}$. For a clearer exposition in this section, we shall take ${q}$ to be the square of that value, i.e.\ ${q=2^{2m+1}}$.

\begin{thm}\label{thm: Sz} Let ${q=2^{2m+1}}$ for ${m\geq 0}$, and let ${G}$ be the Suzuki group ${{{}^2\reduce B_2}(q)}$. Then, for all ${c\in\longintegers}$, the group~${G}$ has a ${2}$-minimal
${c}$-nilpotent cover. Furthermore, if ${m\geq 1}$, then
\[ \omega_c(G)= \begin{cases}
q^4+q^2+1 & \textrm{if\/ ${c\ge 2}$,} \\
q^4+q^3-q^2+q-1 & \textrm{if\/ ${c=1}$.}
\end{cases} \]
If ${m=0}$, then ${\omega_c(G)=6}$ for all ${c\in\longintegers}$.
\end{thm}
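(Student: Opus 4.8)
The plan is to mirror the strategy of the preceding subsections: for each value of $c$ I would exhibit a $2$-minimal $c$-nilpotent cover of $G$ and then read off $\omega_c(G)$ as its cardinality via Proposition~\ref{p: N suffices}(3). Write $q=2^{2m+1}$ and $r=2^{m+1}$, so that the three classes of maximal tori of $G$ are cyclic of orders $q-1$, $q+r+1$ and $q-r+1$, and the Sylow $2$-subgroups have order $q^2$. The case $m=0$, where $G\cong{{}^2\reduce B_2}(2)$ is soluble of order $20$, I would settle by direct computation, so assume $m\geq 1$. Two structural facts underpin everything. First, by Proposition~\ref{p: partition} the group $G$ has a partition $\mathcal{P}=\mathcal{P}_T\cup\mathcal{P}_p$ into its maximal tori and its Sylow $2$-subgroups; in particular every nontrivial element has \emph{either} odd order (lying in a unique maximal torus) \emph{or} $2$-power order (lying in a unique Sylow $2$-subgroup), and distinct members of $\mathcal{P}$ meet trivially. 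By Proposition~\ref{p: steinberg}, $|\mathcal{P}_T|=q^{|\Phi|}=q^4$, while a Sylow count gives $|\mathcal{P}_p|=q^2+1$. Second, from Suzuki's description of ${{}^2\reduce B_2}(q)$ each maximal torus is self-centralizing on its nontrivial elements. Combining this with Proposition~\ref{p: springersteinberg} (valid since every odd prime is coprime to $|W|=8$) and the partition yields the key classification: \emph{every nilpotent subgroup of $G$ is contained in a single maximal torus or in a single Sylow $2$-subgroup}. Indeed a nilpotent group splits as a direct product of its $2$-part and its $2'$-part, and a nontrivial $2$-element cannot commute with a nontrivial odd element without producing an element of mixed order, which the partition forbids.

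Given this classification, the case $c\geq 2$ is immediate. Every maximal torus is abelian and every Sylow $2$-subgroup has class $2$, so $\mathcal{P}$ is itself a $c$-nilpotent cover, of size $q^4+q^2+1$. For $2$-minimality I would take as distinguished element of each torus a generator (a regular element, by sharpness) and of each Sylow $2$-subgroup an element of order $4$. The classification shows any $c$-nilpotent subgroup containing such an element lies in the member that produced it: a nilpotent subgroup containing an odd $g\neq 1$ must be a torus subgroup, hence lies in the unique torus containing $g$, and similarly for $2$-elements. Since distinct members of $\mathcal{P}$ intersect trivially, any two distinguished elements generate a non-$c$-nilpotent subgroup, and Proposition~\ref{p: N suffices} gives $\omega_c(G)=q^4+q^2+1$.

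The case $c=1$ requires replacing each (non-abelian) Sylow $2$-subgroup by its maximal abelian subgroups, and here lies the one genuine computation. A Sylow $2$-subgroup $U$ is a special $2$-group with $Z(U)=\Phi(U)=U'$ of order $q$, consisting of the identity together with the involutions; for $x\in U\setminus Z(U)$ one computes $C_U(x)=\langle x\rangle Z(U)$, abelian of order $2q$. Thus the maximal abelian subgroups of $U$ are exactly the $q-1$ subgroups $\langle x\rangle Z(U)$, which partition $U\setminus Z(U)$ and cover $U$; moreover for $x$ of order $4$ one has $C_G(x)=\langle x\rangle Z(U)$, so these are maximal abelian in $G$. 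Replacing $\mathcal{P}_p$ by all such subgroups over the $q^2+1$ Sylow $2$-subgroups gives an abelian cover of size $q^4+(q-1)(q^2+1)=q^4+q^3-q^2+q-1$. Two distinct members now meet in at most $Z(U)$, which contains no element of order $4$; taking torus generators and order-$4$ elements as distinguished elements, the same argument establishes $2$-minimality, and Proposition~\ref{p: N suffices} gives the stated value of $\omega_1(G)$.

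I expect the main obstacle to lie not in the counting but in the two structural inputs: the self-centralizing (sharp) property of the maximal tori, and the precise centralizer structure $C_U(x)=\langle x\rangle Z(U)$ inside a Sylow $2$-subgroup, both of which I would extract from the known structure of ${{}^2\reduce B_2}(q)$. Once these are in place, the classification of nilpotent subgroups and hence the $2$-minimality of both covers follow formally from the partition, and the cardinalities $q^4+q^2+1$ and $q^4+q^3-q^2+q-1$ drop out of Propositions~\ref{p: steinberg} and~\ref{p: partition} together with the count of $q-1$ maximal abelian subgroups per Sylow $2$-subgroup.
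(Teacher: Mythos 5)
Your proposal is correct and follows essentially the same route as the paper: the partition $\mathcal{P}_T\cup\mathcal{P}_p$ from Suzuki's theorem gives the cover of size $q^4+q^2+1$ for $c\ge 2$, and for $c=1$ each Sylow $2$-subgroup is refined into the $q-1$ abelian subgroups $\langle x\rangle Z(U)$ indexed by the nontrivial cosets of $Z(U)$, with the centralizer computation $C_G(x)=\langle x\rangle Z(U)$ supplying $2$-minimality exactly as in the paper's argument.
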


\begin{proof}
If ${m=0}$, then ${{{}^2\reduce B_2}(q)\cong C_5\rtimes C_4}$ and the result can be computed directly. Assume ${m>0}$ and let ${\mathcal{P}=\mathcal{P}_T\cup\mathcal{P}_p}$ be the partition given in Proposition~\ref{p: partition}. The value of ${|\mathcal{P}_T|}$ is given by Proposition~\ref{p: steinberg} as ${q^4}$ (taking into account the modified definition of ${q}$ currently in force). The value of ${|\mathcal{P}_p|}$ is given by \cite[Theorem~9]{suzuki2} as ${q^2+1}$. Thus
\[ |\mathcal{P}| = |\mathcal{P}_T|+|\mathcal{P}_p| = q^4+q^2+1. \]
The members of~${\mathcal{P}_T}$ are abelian, and the members of~${\mathcal{P}_p}$ are nilpotent of class~${2}$.
So we have ${\omega_2(G)\leq |\mathcal{P}|}$, by Lemma~\ref{lem: upper bound}.

Every member $H$ of~${\mathcal{P}}$ is a maximal nilpotent subgroup of~${G}$. For each ${g\hspace*{-0.2mm}\in\hspace*{-0.2mm} H{\setminus}\{1\}}$,
the only maximal nilpotent subgroup containing~${g}$ is~${H}$ itself \cite[Theorem 9]{suzuki2}. It follows from
Lemma~\ref{lem: lower bound} that ${\omega_\infty(G)\geq |\mathcal{P}|}$, and this is enough to establish the first part of the theorem.

We are left with the task of calculating~${\omega_1(G)}$. Let ${P}$ be a Sylow
${p}$-subgroup of~${G}$, and let ${g\in P\setminus Z(P)}$. Then the centralizer of ${g}$ in ${G}$ is ${\langle g, Z(P)\rangle}$.
If~we choose ${g_1,\dots, g_{q-1}}$ to be representatives from the ${q{-}1}$ non-trivial cosets
of~${Z(P)}$ in~${P}$, then we see that ${[g_i, g_j]\neq 1}$ whenever ${1\leq i<j\leq q-1}$. Define
${H_i := \langle g_i, Z(P)\rangle}$
for ${i=1,\dots, q-1}$; then the subgroups ${H_i}$ form a ${2}$-minimal abelian cover of~${P}$, with the elements ${g_i}$ as distinguished elements.
We may repeat this process for every member of ${\mathcal{P}_p}$. The resulting abelian subgroups, together with
those of ${\mathcal{P}_T}$, form a ${2}$-minimal abelian cover of~${G}$. The size of this cover is given by
\[ |\mathcal{P}_T| + (q-1)|\mathcal{P}_p| = q^4 + (q-1)(q^2+1), \]
and the result follows.
\end{proof}

\subsection{${{{}^2G_2(3^{2m+1})}}$}

The final family of groups to deal with is that of the Ree groups of type ${{}^2G_2(3^{2m+1})}$. As with the Suzuki groups, it is convenient here to redefine ${q}$ to be the square of its value in earlier sections. Thus, in what follows, we set ${q=3^{2m+1}}$.

We will make extensive use of the structural
information about the group ${{^2G_2}(q)}$ given by the main theorem of \cite{ward}; and we
rely on the classification of the maximal subgroups of ${{^2G_2}(q)}$ provided by \cite[Theorem C]{kleidman2}.
Facts stated without proof in this section are taken from the statements of these two theorems.

We also make use of the following parametrization of a Sylow ${3}$-subgroup~${P}$ of~${{^2G_2}(q)}$, which can be found in \cite{atlas}. We write elements of~${P}$ as triples ${(x,y,z) \in \Fq^3}$ with the multiplication
given by
${(x_1,y_1,z_1)\cdot (x_2, y_2, z_2)}$ equal to
\begin{equation}\label{e: multiply}
 (x_1+x_2, y_1+y_2 + x_1x_2^s - x_1^sx_2, z_1+z_2 +
y_1x_2 + x_1^sx_2^2 + x_1x_2^{s+1} - x_1^2x_2^s),
\end{equation}
where ${s=3^{m+1}}$. Under this parametrization it is not hard to see that
\[
\begin{split}
Z(P) &=\{(x,y,z)  \mid  x=y=0\}, \\
Z_2(P) &= \{(x,y,z)  \mid  x=0\},
\end{split}
\]
where~${Z_2(P)}$ is the second term of the upper central series for~${P}$. Both of these groups are elementary
abelian, with orders~${q}$ and~${q^2}$ respectively.

\begin{thm}\label{thm: Ree}
Let ${q=3^{2m+1}}$ for ${m\geq 0}$, and let ${G}$ be the Ree group ${{^2G_2}(q)}$. Then, for all ${c\in\longintegers}$, the group~${G}$ has a ${2}$-minimal
${c}$-nilpotent cover. Furthermore
\[ \omega_c(G)=
\begin{cases}
316 & \textrm{if\/ ${c\ge 2}$ and ${q=3}$,}\\
372 & \textrm{if\/ ${c=1}$ and ${q=3}$,}\\
q^6+q^5+q^3+q^2+1 & \textrm{if\/ ${c\geq 3}$ and ${q>3}$,} \\
q^6 + q^5 + \frac12q^4-\frac12q^3+q^2+\frac12q-\frac12 & \textrm{if\/ ${c=2}$ and ${q>3}$,} \\
q^6+\frac32q^5-\frac12q^4+\frac32q^2-\frac12q & \textrm{if\/ ${c=1}$ and ${q>3}$.}
\end{cases} \]
\end{thm}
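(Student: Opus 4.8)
The plan is to follow the template established in the proofs of Theorems~\ref{thm: su} and~\ref{thm: pgu}: dispose of the case $q=3$ by direct computation in Magma, and for $q>3$ build an explicit $2$-minimal $c$-nilpotent cover $\mathcal{N}$ of $G$ whose members fall into three groups --- those covering the semisimple elements, those covering the genuinely mixed elements, and those covering the unipotent elements. Throughout I would use the structural description of ${}^2G_2(q)$ from \cite{ward}, the list of maximal subgroups from \cite{kleidman2}, and the explicit parametrization \eqref{e: multiply} of a Sylow $3$-subgroup $P$, for which $|P|=q^3$, $|Z(P)|=q$, $|Z_2(P)|=q^2$, and $P$ has nilpotency class $3$. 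The class of $P$ is exactly what forces the three regimes $c\ge 3$, $c=2$ and $c=1$ to behave differently, and it is the source of the three distinct formulae.

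For the semisimple part, I would take $\mathcal{T}$ to be the set of all maximal tori of $G$; these fall into four conjugacy classes of cyclic tori, of orders $q-1$, $q+1$, $q-s+1$ and $q+s+1$ where $s=3^{m+1}$, and by Proposition~\ref{p: steinberg} there are $q^6$ of them in total. Since every semisimple element lies in a maximal torus, $\mathcal{T}$ covers the semisimple elements, and these tori are abelian, so they are admissible for every value of $c$. To obtain $2$-minimality I would select a distinguished element of prime order in each torus using a primitive prime divisor of $q-1$, $q+1$ or $q^2-q+1$ (via Zsigmondy's theorem and Lemma~\ref{lem: fermat}); for $q$ large enough such a prime is coprime to the order of the relevant Weyl group, so Proposition~\ref{p: springersteinberg} shows the chosen element is regular and lies in a unique torus. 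For the mixed elements $g=su$ I would argue, exactly as in the $\SU_3$ case, that $u$ may be taken to lie in the centre of a conjugate of $P$ and $s$ in its centralizer; these are then covered by the conjugates of a single abelian subgroup $M$ built from $Z(P)$ and a cyclic torus (the analogue of the subgroup $M$ of Theorem~\ref{thm: su}), and a centralizer computation gives $|M^G|=q^2(q^3+1)$. This $M$ simultaneously covers the central unipotent elements $Z(P)\setminus\{1\}$.

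The heart of the proof is the unipotent part, where the three regimes diverge. For $c\ge 3$ I would simply use the Sylow $3$-subgroups $P$ themselves, of which there are $q^3+1$; since regular unipotent elements lie in a unique Borel, hence a unique Sylow, Lemma~\ref{lem: regular unipotent set} supplies distinguished elements and the unipotent cover contributes $q^3+1$ members. For $c=2$ I would instead cover $P$ by subgroups of class at most $2$: since $P/Z_2(P)\cong(\Fq,+)$ is elementary abelian of exponent $3$ and $[P,Z_2(P)]\le Z(P)$, each subgroup $\langle g\rangle Z_2(P)$ has class at most $2$, and one such subgroup for each of the $(q-1)/2$ subgroups of order $3$ in $P/Z_2(P)$ covers all of $P$. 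For $c=1$ I would determine the maximal abelian subgroups of $P$ directly from \eqref{e: multiply}: a short computation shows that two elements with nonzero first coordinates $x_1,x_2$ commute only if $x_2=\pm x_1$ (using $\gcd(s-1,q-1)=2$), and analysing the resulting abelian subgroups shows that $q(q-1)/2$ of them are needed to cover $P$. In all three cases, combining the $q^2(q^3+1)$ conjugates of $M$ with the $(q^3+1)k_c$ unipotent members gives a cover of the unipotent and mixed elements of size $(q^3+1)\bigl(q^2+k_c\bigr)$ with $k_c\in\{1,(q-1)/2,q(q-1)/2\}$, and adding the $q^6$ tori reproduces the three stated polynomials.

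Finally I would assemble $\mathcal{N}$ and verify $2$-minimality: each torus contributes its regular element, each copy of $M$ its mixed element $su$ (whose only $c$-nilpotent overgroup is $M$), and each unipotent member a regular unipotent element lying in no other member, so that the distinguished elements form a non-$c$-nilpotent set of the required size; that $\mathcal{N}$ is a cover then gives the matching upper bound via Lemma~\ref{lem: upper bound}. I expect the main obstacle to be the $c=1$ (and to a lesser extent $c=2$) analysis of the class-$3$ group $P$: extracting the maximal abelian (respectively class-$2$) subgroups from the multiplication formula \eqref{e: multiply}, proving that exactly $q(q-1)/2$ (respectively $(q-1)/2$) of them are needed, and checking that no two members of $\mathcal{N}$ (or their small overgroups) contain one another. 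It is precisely this combinatorics that produces the half-integer coefficients in the formulae, and it is the step least amenable to a uniform argument.
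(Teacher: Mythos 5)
Your overall architecture --- the $q^6$ maximal tori for the semisimple elements, a single conjugacy class of small nilpotent subgroups for the mixed elements, and a $c$-dependent covering of the Sylow $3$-subgroups whose granularity ($1$, $(q-1)/2$ or $q(q-1)/2$ pieces per Sylow) produces the three polynomials --- is exactly the paper's, and your constructions for $c\ge 3$ and $c=2$ match the paper's essentially verbatim. But your treatment of the mixed elements, and consequently of the $c=1$ case, contains a genuine gap rooted in a misplaced analogy with $\SU_3(q)$. In ${}^2G_2(q)$ the centralizer of a non-identity element of $Z(P)$ has odd order, so no mixed element has unipotent part in the centre of a Sylow $3$-subgroup: the mixed elements are precisely the elements of order $6$, their unipotent parts are conjugate into $Z_2(P)\setminus Z(P)$, and the relevant covering subgroup is the order-$6$ centralizer $F\cong C_2\times A(q)$ of order $2q$, where $A(q)$ is elementary abelian of order $q$ and is \emph{not} a conjugate of $Z(P)$. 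Your subgroup ``$M$ built from $Z(P)$ and a cyclic torus'' therefore does not contain the elements it is meant to cover; the coincidence $|M^G|=|F^G|=q^2(q^3+1)$ is the only reason your arithmetic still comes out right.

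This matters most for $c=1$. One checks from \eqref{e: multiply} that $C_P(h)=\langle h,Z(P)\rangle$ has order exactly $3q$ for $h\notin Z_2(P)$, so any abelian subgroup meeting $P\setminus Z_2(P)$ contains at most $2q$ of its $q^2(q-1)$ elements; hence at least $q(q-1)/2$ such subgroups are needed, they partition $P\setminus Z_2(P)$, and every one of them meets $Z_2(P)$ only in $Z(P)$. Consequently the $q(q-1)$ elements of $Z_2(P)\setminus Z(P)$ are \emph{not} covered, and no family of $q(q-1)/2$ abelian subgroups of $P$ can cover all of $P$; as written, your $\mathcal{N}$ for $c=1$ is not a cover and the upper bound $\omega_1(G)\le|\mathcal{N}|$ fails (adding $Z_2(P)$ itself would repair the cover but break the formula). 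The paper's fix is precisely the structural fact you elide: every element of $Z_2(P)\setminus Z(P)$ commutes with an involution and hence already lies in a conjugate of $F$, so the class $F^G$ --- needed anyway for the order-$6$ elements --- absorbs these unipotent elements at no extra cost. Identifying $F$ correctly and proving this containment (the paper extracts it from Ward's theorem) is an essential step, not a formality. A further minor slip: the maximal torus of order $q+1$ is $C_{(q+1)/2}\times C_2$, not cyclic, though this is harmless since only abelianness is used.
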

\begin{proof}
In the case that ${m=0}$, and hence ${q=3}$, we have the isomorphism
${{^2G_2}(q)\cong \PSL_2(8).3}$,
and the result can be computed directly. Assume that ${m>0}$ and observe
that the set of prime divisors of~${|G|}$ can be partitioned naturally into
six sets:
\begin{enumerate}
\item the two singleton sets~${\{2\}}$ and~${\{3\}}$,
\item the two sets ${\mathcal{R}^\pm}$ of prime divisors of ${ q\pm\sqrt{3q}+1}$,
\item the set ${\mathcal{S}}$ of prime divisors of ${\frac{q-1}{2}}$,
\item  the set ${\mathcal{T}}$ of prime divisors ${\frac{q+1}{4}}$.
\end{enumerate}
We say that an element ${g}$ of ${G}$ \emph{belongs} to one of the sets ${\mathcal{R}^\pm}$, ${\mathcal{S}}$ or ${\mathcal{T}}$ if all of the
prime divisors of the order of ${g}$ belong to that set. If the element ${g}$ belongs to any of these sets, then there is a unique maximal nilpotent subgroup
${M(g)}$ containing~${g}$, which is equal to the centralizer in ${G}$ of ${g}$.
\begin{enumerate}
\item If ${g}$ belongs to ${\mathcal{R}^\pm}$, then ${M(g)}$ is cyclic of order ${q\pm\sqrt{3q} +1}$. We
write~${C_\pm}$ for the subgroup ${M(g)}$.
\item If ${g}$ belongs to ${\mathcal{S}}$, then ${M(g)}$ is cyclic of order ${q-1}$. We
write~${D}$ for the subgroup ${M(g)}$ in this case.
\item If~${g}$ belongs to ${\mathcal{T}}$, then ${M(g)}$ is isomorphic to ${C_{\frac{q+1}{2}}\times C_2}$.
We write~${E}$ for the subgroup ${M(g)}$.
\end{enumerate}

Define ${\mathcal{N}_T}$ to be the set of all conjugates in ${G}$ of ${C_\pm}$, ${D}$ or ${E}$.
So ${\mathcal{N}_T}$ is a~set of abelian subgroups of ${G}$, and every element of ${G}$ whose order is divisible by a prime greater
than~${3}$ lies in a member of ${\mathcal{N}_T}$. Furthermore if~${g}$ and~${h}$ are
elements of maximal order in distinct members of ${\mathcal{N}_T}$, then ${\langle g, h\rangle}$ is clearly not nilpotent.

It remains to deal with elements of~${G}$ whose order is not divisible by a prime greater than ${3}$; these have order~${2}$,~${3}$,~${6}$ or~${9}$.
Let~${P}$ be a Sylow ${3}$-subgroup of~${G}$ and let~${g_P}$ be an element in~${P}$ of the maximal order~${9}$. Since ${P\cap Q=\{1\}}$ for
distinct Sylow ${3}$-subgroups of~${G}$, and since~${P}$ contains the centralizer of~${g}$, we see that~${P}$ is the only maximal
nilpotent subgroup containing~${g}$.

Note next that there is a unique conjugacy class of involutions in~${G}$; it follows easily from the
centralizer structure that we have outlined, by straightforward Sylow arguments,
that ${\mathcal{N}_T\cup P^G}$ contains all elements of~${G}$ of order other than~${6}$.

Now if ${g\in G}$ has order~${6}$, then the only maximal nilpotent subgroup containing~${g}$ is the
centralizer in~${G}$ of~${g}$, which we shall call ${F}$. The subgroup ${F}$ is isomorphic to ${C_2\times A(q)}$,
where~${A(q)}$ is an elementary abelian group of order~${q}$.
We see that ${\mathcal{N}=\mathcal{N}_T\cup P^G\cup F^G}$ covers every element of~${G}$.

For every element~${N}$ of~${\mathcal{N}}$, we have seen that ${N}$ has an element~${g_N}$ such that~${N}$ is the only maximal
nilpotent subgroup containing~${g_N}$. It follows that~${\mathcal{N}}$ is a ${2}$-minimal nilpotent cover
of~${G}$. We note, furthermore, that every member of~${\mathcal{N}}$ is abelian, apart from the Sylow ${3}$-subgroups
of~${G}$. So~${\mathcal{N}}$ is a ${2}$-minimal ${3}$-nil\-potent cover of~${G}$.
Now Proposition~\ref{p: N suffices} implies that for ${c\geq 3}$, we have
\[ \omega_c(G)=|\mathcal{N}| = |\mathcal{N}_T|+|P^G|+|F^G|. \]

The set~${\mathcal{N}_T}$ is in one-to-one correspondence with the maximal tori of~${G}$ and thus, by
Proposition~\ref{p: steinberg}, it has order~${q^6}$. Thus
\begin{equation*}
\begin{aligned}
\omega_C(G) &= q^6 + |P^G|+|F^G| \\
&= q^6 + \frac{|G|}{q^3(q-1)} + \frac{|G|}{q(q-1)} \\
&=q^6+q^5+q^3+q^2+1.
\end{aligned}
\end{equation*}

We now consider the case ${c\hspace*{-0.2mm}=\hspace*{-0.2mm}2}$. We construct a ${2}$-minimal ${2}$-nilpotent cover~${\mathcal{N}_2}$ as follows.
From the set ${\mathcal{N}}$ constructed above, we retain the sets~${\mathcal{N}_T}$ and~${F^G}$, since the subgroups in
these sets are abelian. Let ${g\in P\setminus Z_2(P)}$ be an element of order~${9}$, and define\vspace*{1mm}
\[
P_g := \langle g, Z_2(P)\rangle. \vspace*{1mm}
\]
Using the parametrization of ${P}$ described above, we write
${g\hspace*{-0.2mm}=\hspace*{-0.2mm}(x_0,y_0,z_0)}$. We~see that\vspace*{1mm}
\[
 P_g=\{(x,y,z)  \mid  x\in \{0,x_0, 2x_0\},\, y,z\in\Fq\}. \vspace*{1mm}
\]
Now clearly ${Z(P_g) = Z(P)}$, and ${P_g/Z(P_g)}$ is abelian. We conclude that~${P_g}$ is a non-abelian group of
order~${3q^2}$ and nilpotency class~${2}$.

For ${i=1,\dots, \frac{q-1}{2}}$, let ${g_i=(x_i, y_i, z_i)}$ be elements of~${P}$, chosen in such a way that
${x_1,\dots, x_{\frac{q-1}{2}}\in \Fq}$
are pairwise linearly independent as vectors over~${\mathbb{F}_3}$,
i.e.\ so that the set of subspaces \[ \biggl\{\langle x_i \rangle\mid i=1,\dots, \frac{q-1}{2}\biggr\} \]
contains every~${1}$-subspace of~${\mathbb{F}_q}$. It is easy to see that the corresponding subgroups~${P_{g_i}}$ cover the Sylow
${3}$-group~${P}$; there are ${\frac{q-1}{2}}$ of these subgroups.

We claim that the groups~${P_{g_i}}$ are a ${2}$-minimal ${2}$-nilpotent cover of~${P}$. To see this we must show that
for distinct~${i}$ and~${j}$, the group ${P_{i,j}:= \langle g_i, g_j\rangle}$ has nilpotency class~${3}$. Using
\eqref{e: multiply}, one can check that ${Z(P_{i,j}) = P_{i,j}\cap Z(P)}$; and since ${P_{i,j}/ Z(P_{i,j})}$
is clearly non-abelian, we conclude that~${P_{i,j}}$ has nilpotency class ${3}$, as required.
The elements ${g_i}$ will be our distinguished elements.

The construction just described may be repeated inside any Sylow ${3}$-subgroup of~${G}$, to obtain in each a set of ${\frac{q-1}{2}}$
nilpotent subgroups of class ${2}$. Let~${\mathcal{N}_{2,p}}$ be the collection of all these subgroups. We recall that
${P \cap P^g=\{1\}}$ whenever~${P}$ and~${P^g}$ are distinct, and so any pair of distinguished elements from distinct
members of~${\mathcal{N}_{2,p}}$ generate either a non-nilpotent group (if they belong to distinctSylow ${3}$-subgroups),
or a group of nilpotency class~${3}$ (otherwise).

We now define
${\mathcal{N}_2:= \mathcal{N}_T \cup F^G \cup \mathcal{N}_{2,p}}$,
and observe that the members of
${\mathcal{N}_{2,p}}$ cover~${G}$.
We have seen that each member~${N}$ of~${\mathcal{N}_T \cup F^G}$ contains an element~${g_N}$ which lies in no other
maximal nilpotent subgroup of ${G}$. On the other hand, for each member~${N}$ of~${\mathcal{N}_{2,p}}$
there is an element~${g_N\in N}$ such that the only maximal nilpotent subgroup containing~${g_N}$ is a Sylow ${3}$-subgroup
of~${G}$; furthermore, if ${N_1,N_2\in\mathcal{N}_{2,p}}$ are distinct, then the elements ${g_{N_1}}$ and ${g_{N_2}}$
generate either a non-nilpotent group or a group of nilpotency class~${3}$.
It follows that~${\mathcal{N}_2}$ is a ${2}$-minimal ${2}$-nilpotent cover of~${G}$.

We now calculate that
\[
\begin{aligned}
|\mathcal{N}_2| &= |\mathcal{N}_T| + |F^G| + |\mathcal{N}_{2,p}|  \\
&= |\mathcal{N}_T| + |F^G| +  \frac12(q-1)|P^G| \\
&= q^6 + q^5+q^2 + \frac12(q-1)(q^3+1)
\end{aligned}
\]
and the claimed result for ${c=2}$ follows.

It remains to deal with the case ${c=1}$. We construct a ${2}$-minimal ${1}$-nilpotent (abelian) cover, and as a consequence obtain a
maximal non-commuting set in~${G}$. As we did for ${c=2}$, we retain the sets~${\mathcal{N}_T}$ and~${F^G}$ of abelian subgroups.

Let ${h=(x_0,y_0,z_0)\in P\setminus Z_2(P)}$ be an element of order~${9}$. Define
${Q_h := \langle h, Z(P)\rangle}$,
and observe that
\[ Q_h=\{(x,y,z)  \mid  (x,y)\in \{(0,0),(x_0, y_0), (2x_0, 2y_0)\},z\in\Fq\}. \]
Clearly~${Q_h}$ is abelian of order~${3q}$.

For ${i=1,\dots, \frac{q(q-1)}{2}}$, let ${h_i=(x_i, y_i, z_i)}$ be elements of ${P\setminus Z_2(P)}$, chosen so that
${(x_1, y_1),\dots, (x_{\frac{q-1}{2}}, y_{\frac{q-1}{2}})\in \Fq\times \Fq}$
are pairwise linearly independent as
vectors over~${\mathbb{F}_3}$. So the set \[ \biggl\{\langle (x_i,y_i) \rangle\mid i=1,\dots, \frac{q(q-1)}{2}\biggr\} \]
contains all of those~${1}$-spaces in ${\mathbb{F}_q\times \mathbb{F}_q}$ with non-zero first coordinate. It is easy to see that
the set of corresponding subgroups~${Q_{h_i}}$ covers all elements of~${P\setminus Z_2(P)}$, as well
as all elements of~${Z(P)}$. There are ${\frac{q(q-1)}{2}}$ subgroups in this set.
Using~\eqref{e: multiply}, it is clear that for distinct~${i}$ and~${j}$, we have ${[h_i, h_j]\neq 1}$, and so~${\langle h_i, h_j\rangle}$ is non-abelian.

As in the case ${c=2}$, we can repeat the construction inside every Sylow ${3}$-sub\-group of~${G}$, obtaining in each case
a collection of ${\frac{q-1}{2}}$ abelian subgroups. Let~${\mathcal{N}_{1,p}}$ be the set of all the subgroups so obtained, and let
\[ \mathcal{N}_1= \mathcal{N}_T \cup F^G \cup \mathcal{N}_{1,p}. \]

We have seen that ${\mathcal{N}_T\cup F^G}$ contains all elements of~${G}$ whose order is not a power of~${3}$.
Now~${\mathcal{N}_{1,p}}$ contains all~${3}$-elements, apart from those conjugate to an element of ${Z_2(P)\setminus Z(P)}$.
But such an element is contained in a member of~${F^G}$, and so we
conclude that~${\mathcal{N}_1}$ covers~${G}$.

We must now establish the ${2}$-minimality of ${\mathcal{N}_2}$. We have seen how to find distinguished elements of the subgroups in
${\mathcal{N}_T \cup F^G}$. Every member~${N}$
of~${\mathcal{N}_{1,p}}$ contains an element~${g_N}$ such that the only maximal nilpotent subgroup containing~${g_N}$
is a Sylow ${3}$-subgroup of~${G}$; furthermore, any two such elements generate a non-abelian group. It follows that~${\mathcal{N}_1}$
is a ${2}$-minimal abelian cover of~${G}$.

Finally, we observe that
\[
\begin{aligned}
|\mathcal{N}_1| &= |\mathcal{N}_T| + |F^G| + |\mathcal{N}_{1,p}|  \\
&= |\mathcal{N}_T| + |F^G| +  \frac12q(q-1)|P^G| \\
&= q^6 + q^5+q^2 + \frac12q(q-1)(q^3+1)
\end{aligned}
\]
and the result follows.
\end{proof}

\section{Questions and conjectures}\label{s: questions}

The main results of this paper suggest a number of interesting questions which we discuss below.

\subsection{Questions about exact formulae}

Let ${G}$ be a simple algebraic group, and ${G^F}$ a finite group of Lie type with (twisted) Lie rank equal to ${1}$.
This is the situation treated in \S\ref{s: exact} above, and our results from that section, collectively, have a number of suggestive properties.
\begin{itemize}
 \item Provided that ${q\hspace*{-0.3mm}>\hspace*{-0.3mm}5}$, the isogeny class of ${G}$ does not affect the value of ${\omegan{G^F}}$.
 \item Provided that ${q>5}$, the value of ${\omegan{G^F}}$ is a polynomial in ${q}$. Furthermore, all of the coefficients in this polynomial are equal to ${1}$ or ${-1}$.
 \item Provided that ${q>3}$, the values of ${\omega_\infty(\SU_3(q))}$ and ${\omega_\infty({^2G_2}(q))}$ coincide.
\end{itemize}

We are naturally interested in how far these phenomena generalize. For instance, let us continue to suppose that ${G^F}$ is a finite group of Lie type, where ${G}$ is simple, but let us drop the assumption that ${G^F}$ has rank ${1}$. Then we have the following questions.
\begin{question}
Is ${\omegan{G^F}}$ independent of the isogeny class of ${G}$ for large enough~${q}$?
\end{question}
\begin{question}
Is ${\omegan{G^F}}$ a polynomial in ${q}$ for large enough ${q}$? Are the coefficients of ${\omega_{\infty}(G^F)}$ always equal to ${1}$ or ${-1}$?
\end{question}

\begin{question}
Can we classify (and explain) any coincidences in the value of ${\omega_c(G^F)}$ for different families of groups ${G^F}$?
\end{question}

\subsection{Intersection with conjugacy classes}

In this paper we have focused on the problem of calculating the order of a maximal non-nilpotent set in a finite group $G$. We are also interested in the possible structure of such a set and an obvious first approach is to study the interaction of non-nilpotent sets with conjugacy classes. With this in mind, then, we define a non-trivial conjugacy class which is a non-nilpotent set to be a {\it non-nilpotent class} of the finite group $G$. We pose the following question:

\begin{question}
Suppose that ${G}$ contains a non-nilpotent class. Can we describe the structure of ${G}$?
\end{question}

A natural first question concerning the structure of such a group ${G}$ would be to ask if it can be simple: we conjecture below that this is impossible. Some evidence for this conjecture can be found in the following result. Recall that ${O_p(G)}$ is the largest normal ${p}$-group in ${G}$.

\begin{lem}
Suppose that ${G}$ contains a non-nilpotent class of elements of prime order. Then ${G}$ is not simple.
\end{lem}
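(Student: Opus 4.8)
The plan is to prove the contrapositive-style statement by assuming that $G$ is simple and that $C$ is a non-nilpotent conjugacy class consisting of elements of prime order $p$, and then deriving a contradiction. The key observation to exploit is that a conjugacy class being a \emph{non-nilpotent set} means that for any two distinct $x, y \in C$, the subgroup $\langle x, y\rangle$ is not nilpotent; in particular no two distinct conjugates of a fixed element $x$ commute, so $C_G(x) \cap C = \{x\}$. Since the elements of $C$ have prime order $p$, this commuting structure is tightly constrained, and I would aim to show that $x$ lies in a normal $p$-subgroup, forcing $O_p(G) \neq 1$, which contradicts the simplicity of $G$.

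First I would fix $x \in C$ and examine a Sylow $p$-subgroup $P$ containing $x$. The crucial step is to understand how many conjugates of $x$ can lie in $P$. If $y \in C \cap P$ with $y \neq x$, then $\langle x, y\rangle \leq P$ is a $p$-group, hence nilpotent, contradicting the non-nilpotency of $C$. Therefore $C \cap P = \{x\}$: each Sylow $p$-subgroup of $G$ contains \emph{exactly one} element of $C$. This is a very strong numerical and structural constraint. Counting incidences between elements of $C$ and Sylow $p$-subgroups, and comparing with the number of Sylow $p$-subgroups (which equals $|G : N_G(P)|$), should pin down the relationship between $|C|$, $|P|$, and the normalizer structure.

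The heart of the argument will then be to translate ``each Sylow $p$-subgroup meets $C$ in a single element'' into a normality statement. Since $x$ has prime order and lies in a unique element of $C$ inside $P$, I would consider the action of $N_G(P)$ on $C \cap P = \{x\}$: this shows $x$ is fixed, so $N_G(P) \leq C_G(x)$ in its action on that singleton, and more usefully, that $\langle C \cap P\rangle$ behaves like a characteristic object. The aim is to show that $x$ is the unique element of $C$ in $P$ \emph{and} that $x$ is central in $P$ or generates a subgroup normalized by $N_G(P)$, which would make the subgroup generated by $C$-elements in $P$ a building block for a nontrivial normal $p$-subgroup. Assembling the $G$-conjugates and using that $C$ is a single class, one concludes $O_p(G) \neq 1$, contradicting simplicity.

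The main obstacle I anticipate is the step passing from the local condition ``$C$ meets each Sylow $p$-subgroup exactly once'' to a genuine normal subgroup; a priori the elements of $C$ in distinct Sylow subgroups need not generate anything normal, and one must rule out the possibility that they generate all of $G$ (which is consistent with simplicity). I expect this gap to be bridged by a commuting/fusion argument: because the elements of $C$ have prime order and no two commuting conjugates exist, the centralizer $C_G(x)$ contains no other member of $C$, which severely limits fusion of $x$ and should, via a transfer or Sylow-counting argument (in the spirit of Lemma~\ref{lem: sylow} and the relationship between conjugacy-class structure and Sylow subgroups used throughout \S\ref{s: groups}), force $x$ into a normal $p$-complement-type obstruction or directly into $O_p(G)$. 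This is the delicate point where the prime-order hypothesis is essential, and where I would expect the real work of the proof to lie.
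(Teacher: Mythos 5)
Your first step is exactly the paper's: since the elements of $C$ have order $p$, two of them generate a nilpotent group precisely when they lie in a common Sylow $p$-subgroup, so non-nilpotency of the class forces $C\cap P=\{x\}$ for each Sylow $p$-subgroup $P$ containing $x$ --- that is, $x$ is \emph{isolated}. This part is correct and complete. (As you note, it also gives $x\in Z(P)$ and $N_G(P)\le C_G(x)$, since conjugation by $P$ or by $N_G(P)$ permutes the singleton $C\cap P$.)

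The gap is the step you yourself flag as ``the delicate point where the real work lies'': passing from isolation to non-simplicity. You hope to bridge it ``via a transfer or Sylow-counting argument,'' but no such elementary bridge exists. The statement that an isolated element of prime order $p$ cannot occur in a nonabelian simple group is precisely the $Z_p^*$-theorem, which is what the paper invokes: for $p=2$ this is Glauberman's $Z^*$-theorem, whose only known proofs go through modular representation theory (block theory), and for odd $p$ it is the theorem of Guralnick and Robinson, whose proof depends on the classification of finite simple groups. Knowing that $\langle x\rangle$ is normalized by $N_G(P)$, or that fusion of $x$ is controlled by $N_G(P)$ (Burnside), does not produce a proper nontrivial normal subgroup; the normal closure $\langle x^G\rangle$ is all of $G$ whenever $G$ is simple, and standard transfer arguments (Burnside's normal $p$-complement theorem, the focal subgroup theorem) do not apply because $N_G(P)/C_G(P)$ need not be a $p$-group and $x$ need not avoid the focal subgroup. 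So the proposal is not wrong in direction, but it is incomplete in an essential way: the conclusion genuinely requires citing the $Z_p^*$-theorem (or reproving it), and cannot be reached by the elementary local analysis you outline.
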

\begin{proof}
Let ${C}$ be a non-nilpotent class of elements of prime order ${p}$ in ${G}$ and let~${x,y\in C}$. The group ${\langle x, y \rangle}$ will be nilpotent precisely if it is a ${p}$-group. Thus the class ${C}$ will be non-nilpotent if and only if no two elements of ${C}$ lie in the same Sylow ${p}$-subgroup of ${G}$. But in this case the elements of ${C}$ are {\it isolated} in the Sylow ${p}$-subgroup in which they lie. Now the ${Z_p^*}$ Theorem (due to Glauberman \cite{glauberman} when ${p=2}$, and to Guralnick and Robinson \cite{gurrob}
for odd ${p}$) implies that ${C\subset O_p(G)}$, and hence that the group is not simple.
\end{proof}

So much for negative information about groups containing non-nilpotent conjugacy classes. On the other hand there are certainly many groups containing such a class. (In what follows, for ${x}$ an element of a group ${H}$, we write ${x^H}$ for the conjugacy class of ${H}$ containing ${x}$.)

\begin{lem}
Suppose that ${G}$ is a Frobenius group with complement ${H}$, and let ${x\in Z(H)}$. Then ${x^G}$ is a non-nilpotent set in ${G}$.
\end{lem}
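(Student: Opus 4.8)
The plan is to use the standard structure theory of Frobenius groups. Let $K$ denote the Frobenius kernel, so that $K$ is normal, $G = K \rtimes H$ and $K \cap H = 1$, and $H$ acts on $K$ by conjugation without nontrivial fixed points; that is, $C_K(h) = 1$ for every $1 \neq h \in H$. I shall also use the elementary fact that $\gcd(|K|,|H|) = 1$, which follows because the nonidentity elements of $K$ fall into $H$-orbits each of size $|H|$ (the point stabilizers being trivial by fixed-point-freeness). Note that nilpotency of $K$ is \emph{not} needed here. I may assume $x \neq 1$, since otherwise $x^G$ is a singleton and there is nothing to prove.

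First I would pin down the conjugacy class $x^G$. Since $x \in Z(H)$ and, by the defining property of a Frobenius complement, $C_G(x) \le H$ for any $1 \neq x \in H$, we obtain $C_G(x) = H$. Writing a general element of $G$ as $g = hk$ with $h \in H$ and $k \in K$, and using that $h$ centralizes $x$, we get $x^g = x^k$; moreover $k \mapsto x^k$ is a bijection from $K$ onto $x^G$. Hence any two distinct elements of $x^G$ have the form $x^{k_1}, x^{k_2}$ with $k_1 \neq k_2$ in $K$, and conjugating by $k_1^{-1} \in K$ shows that $\langle x^{k_1}, x^{k_2}\rangle$ is conjugate to $\langle x, x^{k}\rangle$ with $k = k_2k_1^{-1} \neq 1$. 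As nilpotency is conjugation-invariant, it suffices to prove that $N := \langle x, x^k \rangle$ is non-nilpotent for each $1 \neq k \in K$.

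The key step is to exhibit a nontrivial kernel element inside $N$. Consider $m := x^{-1} x^k = (x^{-1} k^{-1} x)\,k = (k^{-1})^x k$. Since $K$ is normal, $(k^{-1})^x \in K$, so $m \in K$; and $m = 1$ would force $x$ to centralize $k$, contradicting $C_K(x) = 1$ as $x \neq 1 \neq k$. Thus $N$ contains both $x \in H$ and the nonidentity element $m \in K$. Now suppose for contradiction that $N$ is nilpotent. Then any two elements of $N$ of coprime order commute; since $|x|$ divides $|H|$, $|m|$ divides $|K|$, and $\gcd(|H|,|K|) = 1$, the orders of $x$ and $m$ are coprime, whence $[x,m] = 1$. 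But this says that $x$ centralizes the nonidentity kernel element $m$, again contradicting $C_K(x) = 1$. Therefore $N$ is not nilpotent, and $x^G$ is a non-nilpotent set.

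I do not expect a serious obstacle: the argument is short once the standard facts about Frobenius groups are invoked. The only points requiring slight care are the reduction to pairs of the shape $(x, x^k)$ and the verification that $m = x^{-1} x^k$ is a nontrivial element of $K$; after that, everything is forced by the coprimality of $|H|$ and $|K|$ together with the fixed-point-free action of $H$ on $K$.
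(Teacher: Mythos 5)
Your proof is correct, and it ends with the same punchline as the paper's: a nilpotent subgroup containing both $x$ and a nonidentity element of the kernel $K$ would force these two elements of coprime order to commute, contradicting the fixed-point-freeness of the action of $H$ on $K$. Where you genuinely diverge is in how you produce a nontrivial element of $\langle x,y\rangle\cap K$. The paper argues indirectly: if $\langle x,y\rangle\cap K=\{1\}$, then $\langle x,y\rangle$ lies in a single conjugate of the complement (a standard but nontrivial structural fact about Frobenius groups), and two \emph{distinct} conjugates of a central element of $H$ cannot lie in a common complement; hence the intersection is nontrivial. You instead first reduce, using $C_G(x)=H$, to pairs of the form $(x,x^k)$ with $1\neq k\in K$, and then exhibit the explicit nonidentity kernel element $m=x^{-1}x^{k}=(k^{-1})^{x}k$ of $\langle x,x^k\rangle$, with $m\neq 1$ exactly because $C_K(x)=\{1\}$. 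Your route is more elementary and self-contained --- it avoids invoking the ``subgroups meeting $K$ trivially lie in a conjugate of $H$'' theorem --- and as a by-product it identifies $x^G$ bijectively with $K$. The paper's version is shorter but leans harder on the structure theory of Frobenius groups. Both arguments are sound.
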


Note that, in particular, a Frobenius group with abelian complement contains a conjugacy class which is a non-nilpotent set. A relevant example is ${\PSL_2(3) \cong A_4}$ in which both conjugacy classes of elements of order ${3}$ are non-nilpotent classes.

\begin{proof}
Write ${K}$ for the Frobenius kernel of ${G}$ and note that ${(|K|, |H|)=1}$. Let~${y}$ be a conjugate of ${x}$. If ${\langle x,y\rangle \cap K=\{1\}}$, then ${x}$ and ${y}$ lie in the same complement which is a contradiction. Hence ${\langle x, y \rangle \cap K\neq \{1\}}$. But ${C_G(x)\cap K =\{1\}}$ and ${(o(x), |K|)=1}$, thus ${\langle x,y\rangle}$ is not nilpotent.
\end{proof}

Rather than restricting our attention to non-nilpotent classes, one can ask the more general question of how conjugacy classes of a group interact with non-nilpotent sets. In this respect, a classical theorem of Baer and Suzuki is relevant (see \cite[p.\,298]{huppertI}, \cite{suzuki2}).

\begin{thm}
 Let ${C}$ be a conjugacy class in ${G}$ such that for all ${x,y\in G}$, ${\langle x, y \rangle}$~is nilpotent. Then ${C\subset F(G)}$, the Fitting group of ${G}$.
\end{thm}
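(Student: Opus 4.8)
The plan is to deduce the general (all-primes) statement from the classical prime-power form of the Baer--Suzuki theorem, which I would treat as the genuinely difficult input and simply cite (it is the content of the references to Huppert and Suzuki). Recall that for a finite group the Fitting subgroup satisfies $F(G)=\prod_p O_p(G)$, the internal direct product over the primes~$p$ dividing $|G|$; consequently an element lies in $F(G)$ precisely when each of its primary (prime-power) components does. The whole proof is thus organised around reducing the hypothesis on the class $C$ to a hypothesis on each primary component separately.

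First I would set up the primary decomposition. Fixing $x\in C$ (where the hypothesis should be read as applying to all $x,y\in C$), write $x=\prod_p x_p$, where $x_p$ is the $p$-part of~$x$. The key elementary point is that each $x_p$ is a \emph{power} of~$x$: by the Chinese Remainder Theorem applied to the order of~$x$, there is an integer $m$ with $x_p=x^{m}$. Hence for any $g\in G$ the conjugate $x_p^{\,g}=(x^g)^{m}$ is the $p$-part of the conjugate $x^g$, and as $g$ ranges over $G$ the element $x_p^{\,g}$ ranges over the conjugacy class of $x_p$. Next I would transfer the nilpotency hypothesis: since $x_p,x_p^{\,g}\in\langle x,x^g\rangle$, which is nilpotent by assumption, the subgroup $\langle x_p,x_p^{\,g}\rangle$ is nilpotent; being generated by the two $p$-elements $x_p$ and $x_p^{\,g}$, and nilpotent, it must be a $p$-group. (A finite nilpotent group is the direct product of its Sylow subgroups, so any subgroup generated by $p$-elements is contained in, and hence equals, its Sylow $p$-subgroup.) Thus $\langle x_p,x_p^{\,g}\rangle$ is a $p$-group for \emph{every} $g\in G$.

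Now I would invoke the classical prime-power Baer--Suzuki theorem applied to the $p$-element $x_p$: the condition that $\langle x_p,x_p^{\,g}\rangle$ be a $p$-group for all $g\in G$ forces $x_p\in O_p(G)$. Running this argument over each prime $p$ dividing the order of~$x$ gives $x=\prod_p x_p\in\prod_p O_p(G)=F(G)$. Since $F(G)\trianglelefteq G$ and $x\in C$ was arbitrary, the entire class satisfies $C\subseteq F(G)$, as required. The hard part is located entirely in the prime-power statement (that $\langle x,x^g\rangle$ being a $p$-group for all $g$ implies $x\in O_p(G)$), whose proof requires real work --- typically a minimal-counterexample analysis of the subgroup generated by a pair of conjugate $p$-elements; by contrast, the reduction sketched above is routine, relying only on primary decomposition and the structure of finite nilpotent groups.
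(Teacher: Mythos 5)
Your argument is correct, but there is nothing in the paper to compare it against: the paper states this result as ``a classical theorem of Baer and Suzuki'' and simply cites Huppert and Suzuki for it, offering no proof of its own. What you have written is the standard (and correct) derivation of the nilpotent form from the prime-power form of the Baer--Suzuki theorem. Your reading of the hypothesis as ``for all $x,y\in C$'' is the right correction of what is evidently a typo in the statement. The reduction itself is sound at every step: each primary component $x_p$ is a power of $x$ (Chinese Remainder Theorem), so $x_p^{\,g}$ is the $p$-part of $x^g$ and lies in the nilpotent group $\langle x, x^g\rangle$; a nilpotent finite group generated by $p$-elements is a $p$-group, since it is the direct product of its Sylow subgroups; the prime-power Baer--Suzuki theorem then places each $x_p$ in $O_p(G)$, and $F(G)=\prod_p O_p(G)$ finishes the job. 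You have also correctly located where the real difficulty lies --- in the prime-power statement, which you cite rather than prove, exactly as the paper does for the whole theorem. The only thing your write-up adds beyond the paper is the (routine but worth recording) bridge from the $p$-group version to the nilpotent version; since the paper's references cover both forms, this is a reasonable division of labour.
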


In particular this result implies that, in a simple group, every conjugacy class contains a non-nilpotent set of size at least ${2}$. Other results of this ilk---focussed on the property of solvability rather than nilpotency---can be found in~\cite{dghp}. It~is unclear, however how much the value ${2}$ can be increased.

\begin{question}
Let ${G}$ be a simple group. Can one state a minimum bound for the quantity
\begin{equation}\label{e: conjugacy class}
\min\{\omega_\infty(C)  \mid  C \textrm{ is a non-trivial conjugacy class in } G\}?
\end{equation}
\end{question}

One might also ask about upper bounds for the quantity \eqref{e: conjugacy class} for a simple group~${G}$. In this regard we posit the following conjecture, the truth of which would imply, in particular, that a simple group cannot contain a non-nilpotent class.

\begin{conj}
 Let ${G}$ be a simple group. Then
 \begin{equation}
\max\biggl\{\frac{\omega_\infty(C)}{|C|}  \mid  C \textrm{ is a non-trivial conjugacy class in } G\biggr\} \leq \frac12.
\end{equation}
\end{conj}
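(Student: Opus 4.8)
The plan is to recast $\omega_\infty(C)$ as a graph invariant and exploit the symmetry of the conjugation action. Fix a non-trivial class $C=x^G$ and consider the induced subgraph $\Gamma_\infty(G)[C]$ of the nilpotency graph from the introduction: its vertices are the elements of $C$, and two of them are joined exactly when they generate a nilpotent subgroup. By definition a non-nilpotent subset of $C$ is precisely an independent set in this graph, so $\omega_\infty(C)=\alpha\big(\Gamma_\infty(G)[C]\big)$, the independence number. The essential observation is that $G$ acts on $C$ by conjugation transitively, and this action preserves edges, since $\langle x,y\rangle$ nilpotent forces $\langle x^g,y^g\rangle=\langle x,y\rangle^g$ nilpotent. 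Hence $\Gamma_\infty(G)[C]$ is a \emph{vertex-transitive} graph.

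For any vertex-transitive graph the clique--coclique bound asserts that the product of the independence number $\alpha$ and the clique number $\omega$ is at most the number of vertices. (In our setting this follows by summing the inequality $|K\cap I^g|\le 1$ over all $g\in G$, for a maximum clique $K$ and a maximum independent set $I$, and using transitivity of the conjugation action together with the orbit--stabilizer count.) Applying this here gives
\[
\omega_\infty(C)\cdot \omega\big(\Gamma_\infty(G)[C]\big)\le |C|.
\]
Thus the conjecture reduces to the single statement $\omega\big(\Gamma_\infty(G)[C]\big)\ge 2$; that is, to showing that $C$ contains two \emph{distinct} elements $x,y$ with $\langle x,y\rangle$ nilpotent. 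Equivalently, one must show that no non-trivial class of a simple group is itself a non-nilpotent set --- exactly the weaker assertion recorded immediately before the conjecture. The clique--coclique bound therefore exposes a dichotomy: for each class the ratio $\omega_\infty(C)/|C|$ is either $1$ (precisely when $C$ is a non-nilpotent class) or at most $\tfrac12$, with nothing in between.

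It remains to exhibit such a pair $x,y$. If $x$ has order greater than $2$ and $C$ is real, one may take $y=x^{-1}$: then $y\in C$, $y\neq x$, and $\langle x,x^{-1}\rangle=\langle x\rangle$ is abelian; more generally any conjugate power $x^k\in C$ with $x^k\neq x$ will do. The genuinely hard cases, where this construction is unavailable, are the classes of involutions (for which $x=x^{-1}$) and the non-real classes whose only conjugate power is $x$ itself. For involutions the problem becomes that of finding two distinct elements of $C$ inside a common $2$-subgroup; here I would invoke the Brauer--Suzuki theorem, so that a Sylow $2$-subgroup is neither cyclic nor generalised quaternion and hence carries several involutions, and then control $2$-fusion to force two of them into the same class $C$. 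I expect this fusion-control step to be the main obstacle, and to require the classification of finite simple groups; the residual rigid non-real classes would need a separate argument, most naturally by producing a commuting conjugate inside $C_G(x)$. In short, the vertex-transitivity argument cleanly concentrates all of the difficulty into the problem of ruling out non-nilpotent conjugacy classes in simple groups.
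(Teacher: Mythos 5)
The statement you are trying to prove is posed in the paper as an open \emph{conjecture}: the authors offer no proof, only the one-line remark that its truth would imply that a simple group cannot contain a non-nilpotent class, together with a lemma (via the $Z_p^*$-theorem of Glauberman and Guralnick--Robinson) handling classes of prime-order elements. Your first step is correct and genuinely adds something: the induced subgraph of $\Gamma_\infty(G)$ on $C$ is vertex-transitive under conjugation, the clique--coclique bound $\alpha\cdot\omega\le|C|$ applies, and therefore the ratio $\omega_\infty(C)/|C|$ is either $1$ (when $C$ is a non-nilpotent class) or at most $\tfrac12$. This upgrades the paper's one-way remark to an equivalence: the conjecture holds if and only if no non-trivial class of a simple group is a non-nilpotent set. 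That reduction is clean and, as far as the paper is concerned, new.

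The gap is in the second half, and you have correctly located but not closed it. For a class of prime-order elements (in particular involutions) the required pair of distinct adjacent conjugates already follows from the lemma stated immediately before the conjecture: an isolated element of prime order lies in $O_p(G)$ by the $Z_p^*$-theorem, so in a simple group some Sylow $p$-subgroup contains two distinct members of $C$, and these generate a $p$-group, hence a nilpotent group. You do not need a separate Brauer--Suzuki plus fusion-control argument for involutions. What remains genuinely open is the case of elements $x$ of composite order belonging to a class that contains no power $x^k\neq x$: there you must exhibit a conjugate $y\neq x$ of $x$ with $\langle x,y\rangle$ nilpotent (for instance a commuting conjugate inside $C_G(x)$), and no argument is given --- nor is one known to the authors, since the non-existence of non-nilpotent classes in simple groups is precisely the content they leave conjectural. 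So your proposal is a correct and worthwhile reduction, but it is not a proof of the statement.
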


It is clear that the bound ${\frac 12}$ cannot be improved: in the group ${A_5}$, the class of elements of order ${3}$, and the two classes of elements of order ${5}$, all contain non-nilpotent sets of half their size. The same is true in ${\PSL_2(7)}$ of the conjugacy class of elements of order ${3}$.

We remark, finally, that one might ask similar questions for {\it non-commuting classes}. (We say that a conjugacy class ${C}$ in a group ${G}$ is non-commuting if, for all distinct ${g,h}$ in ${C}$, the group ${\langle g,h\rangle}$ is non-abelian.) In particular it is conceivable that the previous conjecture remains true in this more general context, i.e.\ with ${\omega_\infty}$ replaced in the statement by ${\omega_1}$.

\subsection{Rank and nilpotency class}

Let ${N}$ be a nilpotent subgroup of ${G}$, a finite group of Lie type. The connection between the nilpotency class of ${N}$ and the rank of ${G}$ appears to be slightly subtle.

We note, first of all, that the nilpotency class of ${N}$ cannot, in general, be bounded above by a function of the rank of ${G}$. One illustrative example is the case that ${N}$ is a dihedral ${2}$-subgroup of ${G=\PSL_2(p)}$, of the largest possible order ${2^k}$. The value of ${k}$ is unbounded as ${p}$ varies across the primes, and since~${N}$ has nilpotency class ${k-1}$, the non-existence of a bound in terms of the rank of ${G}$ is demonstrated.

On the other hand let ${\ell}$ be the minimum number such that ${\omega_i(G) = \omega_\ell(G)}$ for all ${i\geq \ell}$. Since ${G}$ is finite, such a number ${\ell}$ exists; but in light of the remarks of the previous paragraph, there is no {\it a priori} reason why ${\ell}$ should be bounded above by any function of the rank. It is nevertheless tempting to conjecture that such a bound exists.
\begin{conj}
For all ${r\in \mathbb{Z}^+}$, there exists ${\ell\in\mathbb{Z}^+}$ such that if ${G}$ is a finite group of Lie type of rank~${r}$, then ${\omega_i(G)=\omega_\ell(G)}$ for all ${i\geq \ell}$.
\end{conj}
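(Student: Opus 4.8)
The plan is to deduce the conjecture from the construction of a single well-behaved cover. By Lemma~\ref{lem: quasisimple} it suffices to treat $G^F$ itself. Suppose we can produce, for every simple algebraic group $G$ of rank $r$ and every Frobenius endomorphism $F$, a \emph{nilpotent} cover $\mathcal{N}$ of $G^F$ that is $2$-minimal and all of whose members have nilpotency class at most some bound $\ell(r)$ depending only on $r$. Then the conjecture follows immediately: since a non-nilpotent subgroup is \emph{a fortiori} non-$c$-nilpotent for every finite $c$, the family $\mathcal{N}$ is a $2$-minimal $c$-nilpotent cover for every $c\ge\ell(r)$ (its members, having class at most $\ell(r)\le c$, are $c$-nilpotent, and its distinguished elements still generate non-$c$-nilpotent pairs). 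Proposition~\ref{p: N suffices}(3) then gives $\omega_c(G^F)=|\mathcal{N}|=\omega_\infty(G^F)$ for all such $c$, so that the stabilization point $\ell$ is at most $\ell(r)$.

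The natural candidate is essentially the cover built in the proof of the upper bound of Theorem~\ref{thm: rank-dependent lower bound}: maximal tori (abelian, class $1$) to absorb the semisimple elements; unipotent $p$-subgroups to absorb the unipotent elements; and, for a mixed element $g=su$, the unipotent $p$-subgroups of the centralizers $C_G(s)^0$ to absorb the unipotent part. Every member of this family is either a torus or a unipotent subgroup, and the class of a unipotent subgroup is bounded by the maximal height of a root in $\Phi$, a quantity depending only on $r$; so every member has class at most some $\ell(r)$. The trouble is that this family, as constructed, is \emph{not} $2$-minimal. It proves only that $\omega_c(G^F)\le E_r N(G^F)$ for $c\ge\ell(r)$, which, combined with the lower bound $\omega_\infty(G^F)\ge D_r N(G^F)$, shows merely that $\omega_c$ and $\omega_\infty$ agree up to the rank-dependent factor $E_r/D_r$, not that they are equal.

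I expect the main obstacle to be exactly the gap between this bounded ratio and genuine equality, and the source of the difficulty is the high-class cross-characteristic Sylow subgroups. The phenomenon is already visible in $\PSL_2(q)$: the dihedral Sylow $2$-subgroup $D$ has unbounded class, of order about $\log_2 q$, and within $D$ one finds non-$c$-nilpotent sets of size of order $2^{(\log_2 q)-c}$; consequently the crude covering bound of Lemma~\ref{lem: upper bound} applied to the \emph{maximal} nilpotent subgroups only forces stabilization at a value of $c$ growing with $q$. What rescues the rank-$1$ case is that $D$ is never used as a member of the cover at all: its involutions are semisimple and are instead distributed among distinct maximal tori, so the partition into abelian tori and Sylow $p$-subgroups of Proposition~\ref{p: partition} furnishes a $2$-minimal abelian cover valid for every $c\ge 1$. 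The essential task is to perform this ``shattering'' of the high-class cross-characteristic subgroups in arbitrary rank.

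Concretely, I would proceed as follows. By Proposition~\ref{p: springersteinberg} every Sylow $t$-subgroup with $t\ne p$ lies in the normalizer of a maximal torus, and lies inside the torus itself once $t$ is coprime to $|W|$; since the primes dividing $|W|$ are bounded in terms of $r$, only boundedly many ``bad'' primes can yield non-abelian cross-characteristic Sylow subgroups. For each class of tori one takes as distinguished element a regular element of prime order $t$ coprime to $|W|$, whose existence for $q$ large relative to $r$ is guaranteed by Zsigmondy's theorem and Lemma~\ref{lem: fermat}, exactly as in Theorem~\ref{thm: putting sets together}; such an element has centralizer equal to its containing torus and so generates non-nilpotent groups with all the others. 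The hard part, which in the rank-$1$ analysis of \S\ref{s: exact} required delicate case work (and direct computation for small $q$), will be to show \emph{uniformly in $q$} that after reassigning the high-class bad-prime elements to the torus and unipotent members — as the dihedral involutions are reassigned above — the resulting torus-and-unipotent family both covers $G^F$ and is $2$-minimal. Establishing the existence of such a $2$-minimal bounded-class nilpotent cover in every rank, bearing in mind that $2$-minimal covers need not exist for arbitrary finite groups, is the decisive step.
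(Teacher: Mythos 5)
This statement is labelled as a \emph{conjecture} in the paper, and the paper offers no proof of it: the authors explicitly note that ``there is no \emph{a priori} reason why $\ell$ should be bounded above by any function of the rank,'' and they offer only the exact rank-$1$ computations of \S\ref{s: exact} (where $\ell=3$ suffices for every family treated) as evidence of plausibility. So there is no argument in the paper against which your attempt can be checked, and your proposal should be judged on its own terms. On those terms it is not a proof but a reduction. The reduction itself is sound: if for every $G$ of rank $r$ and every $F$ one could exhibit a $2$-minimal nilpotent cover of $G^F$ all of whose members have nilpotency class at most $\ell(r)$, then by Proposition~\ref{p: N suffices}(3) the same family witnesses $\omega_c(G^F)=\omega_\infty(G^F)$ for all $c\ge\ell(r)$, and the distinguished elements remain usable because a non-nilpotent two-generator subgroup is automatically non-$c$-nilpotent. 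Your diagnosis of where the danger lies (cross-characteristic Sylow subgroups of unbounded class, with the dihedral $2$-subgroups of $\PSL_2(q)$ as the model case) is exactly the paper's own diagnosis in the surrounding discussion.

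The genuine gap is that the reduced statement --- existence of a $2$-minimal, bounded-class nilpotent cover in every rank --- is left entirely unestablished, and it is at least as hard as the conjecture itself; indeed it is strictly stronger, since the conjecture only asserts stabilization of the nonincreasing sequence $\omega_1(G)\ge\omega_2(G)\ge\cdots$ at a rank-dependent index, which could in principle be proved by matching upper and lower bounds without ever producing a $2$-minimal cover (and the paper itself warns that $2$-minimal covers need not exist, citing $S_n$ for $n\ge 15$). Your candidate family from the upper-bound half of Theorem~\ref{thm: rank-dependent lower bound} is, as you say, not $2$-minimal, and yields only $\omega_c(G^F)\le E_r N(G^F)$ against $\omega_\infty(G^F)\ge D_r N(G^F)$; closing the multiplicative gap $E_r/D_r$ to exact equality is precisely the content you have not supplied. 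You are candid that the ``shattering'' of the bad-prime subgroups uniformly in $q$ is the decisive unproven step, so the write-up is an honest and well-aimed programme, but the statement remains open both in the paper and after your proposal.
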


Our results for rank ${1}$ groups provide some evidence for the plausibility of this conjecture. Note, in particular, that the presence of nilpotent
subgroups of unbounded class does not prevent the conjecture being true for the group ${\PSL_2(q)}$.

In another direction one might ask whether the example of the dihedral ${2}$\hbox{-}sub\-groups of ${\PSL_2(q)}$ is in some respects atypical. In particular, one could pose the following question.

\begin{question}\label{q: bound on nilpotency}
Is it the case that for all ${r\in \mathbb{Z}^+}$, there exists ${\ell\in\mathbb{Z}^+}$ such that if ${G}$ is a finite group of Lie type of rank ${r}$, and if ${N}$ is a nilpotent subgroup of ${G}$ of odd order, then the nilpotency class of ${N}$ is less than ${\ell}$?
\end{question}

An obvious way of addressing this question would be to undertake a detailed study of the maximal nilpotent subgroups of finite groups of Lie type. Interesting results in this direction already exist: for instance Vdovin has classified, for every finite simple group, the nilpotent subgroups of maximal order \cite{vdovin}.

\subsection{Non-solvable subsets}

A natural variant of the statistics we have studied in this paper replaces nilpotence with solvability. In this context we study the derived length of a subgroup rather than its nilpotency class. Let ${G}$ be a group and ${c}$ an element of ${\mathbb{Z}^+\cup\{\infty\}}$.
If~${c\in\mathbb{Z}^+}$, then we define ${G}$ to be {\it ${c}$-solvable} if~${G}$ is solvable with derived series of length~${c}$. We define ${G}$ to be {\it ${\infty}$-solvable} if ${G}$ is solvable.

A subset ${X}$ of ${G}$ is said to be {\it non-${c}$-solvable} if, for any two distinct elements ${x,y}$ in ${X}$, ${\langle x, y\rangle}$ is a subgroup of ${G}$ which is not ${c}$-solvable.
Define~${\beta_c(G)}$ to be the maximum order of a non-${c}$-solvable set in ${G}$. One can study the behaviour of ${\beta_c(G)}$ in much the same way as we have done here for ${\omega_c(G)}$, as well as defining the notion of a {\it ${2}$-minimal ${c}$-solvable cover} in the obvious way.

In the case that ${c=1}$, the two notions of ${c}$-solvability and ${c}$-nilpotency are identical. At the other end of the spectrum, however, for large ${c}$, one might expect significant difference in their behaviour. We might ask the same sort of question in this connection as Question~\ref{q: bound on nilpotency}
above:

\begin{question}\label{q: bound on solvability}
Is it the case that, for all ${r\in \mathbb{Z}^+}$, there exists ${\ell\in\mathbb{Z}^+}$ such that if~${G}$ is a finite group of Lie type of rank~${r}$, and if~${N}$ is a solvable subgroup of~${G}$, then the derived series of~${N}$ has length less than~${\ell}$?
\end{question}

\subsection*{Acknowledgments}
The third author was a frequent visitor to the University of Bristol while the work for
this paper was being undertaken; he would like to thank the maths department there for their hospitality.
Particular thanks are due to Jeremy Rickard for fruitful discussions on the subject of groups of Lie type.

All three authors would like to extend their thanks to an anonymous referee who read an earlier version of our paper very carefully and whose comments have improved the paper immeasurably. In particular this referee pointed out a number of small but significant errors, and we wish to register our thanks to them for finding these errors and thereby giving us the opportunity to fix them.

\bibliographystyle{plain}
\bibliography{paper2}

\end{document}